\documentclass{amsart}
\usepackage{latexsym,amsmath, amssymb, amscd,  graphicx, psfrag}
\usepackage{color, xcolor}
\usepackage{fullpage}
\usepackage{tikz-cd}
\setcounter{tocdepth}{1}


\newcommand{\si}{ {\sigma} }
\newcommand{\Si}{\Sigma}
\newcommand{\bSi}{\partial\Si}
\newcommand{\ep}{\epsilon}
\newcommand{\Ga}{ {\Gamma} }


\newcommand{\bA}{ \mathbb{A} }
\newcommand{\bC}{ \mathbb{C} }
\newcommand{\bD}{ \mathbb{D} }
\newcommand{\bF}{ \mathbb{F} }
\newcommand{\bK}{\mathbb{K}}
\newcommand{\bL}{\mathbb{L}}
\newcommand{\bP}{ \mathbb{P} }
\newcommand{\bQ}{ \mathbb{Q} }
\newcommand{\bR}{ \mathbb{R} }
\newcommand{\bS}{\mathbb{S}}
\newcommand{\bT}{ \mathbb{T} }
\newcommand{\bZ}{ \mathbb{Z} }


\newcommand{\bfr}{\mathbf{r}}
\newcommand{\bx}{\mathbf{x}}
\newcommand{\by}{\mathbf{y}}
\newcommand{\bmu}{\boldsymbol{\mu}}
\newcommand{\btau}{\boldsymbol{\tau}}
\newcommand{\one}{\mathbf{1}}


\newcommand{\cA}{\mathcal{A}}
\newcommand{\cB}{\mathcal{B}}
\newcommand{\cC}{\mathcal{C}}

\newcommand{\cF}{\mathcal{F}}

\newcommand{\cI}{\mathcal{I}}
\newcommand{\cL}{\mathcal{L}}
\newcommand{\cM}{\mathcal{M}}
\newcommand{\cO}{\mathcal{O}}
\newcommand{\cP}{\mathcal{P}}
\newcommand{\cQ}{\mathcal{Q}}
\newcommand{\cS}{\mathcal{S}}
\newcommand{\cT}{\mathcal{T}}
\newcommand{\cX}{\mathcal{X}}
\newcommand{\cV}{\mathcal{V}}
\newcommand{\cW}{\mathcal{W}}
\newcommand{\cD}{\mathcal{D}}
\newcommand{\cG}{\mathcal{G}}
\newcommand{\cU}{\mathcal{U}}
\newcommand{\cY}{\mathcal{Y}}
\newcommand{\cZ}{\mathcal{Z}}


\newcommand{\fl}{\mathfrak{l}}
\newcommand{\fm}{\mathfrak{m}}
\newcommand{\fM}{\mathfrak{M}}

\newcommand{\fo}{\mathfrak{o}}
\newcommand{\fp}{\mathfrak{p}}
\newcommand{\fr}{\mathfrak{r}}
\newcommand{\fs}{\mathfrak{s}}

\newcommand{\fC}{\mathfrak{C}}

\newcommand{\fX}{{\mathfrak{X}}}


\newcommand{\Aut}{\mathrm{Aut}}
\newcommand{\Def}{\mathrm{Def}}
\newcommand{\Hom}{\mathrm{Hom}}
\newcommand{\Ext}{\mathrm{Ext}}
\newcommand{\Ker}{\mathrm{Ker}}
\newcommand{\Coker}{\mathrm{Coker}}

\newcommand{\Spec}{\mathrm{Spec}}

\newcommand{\vir}{ {\mathrm{vir}}  }
\newcommand{\ev}{\mathrm{ev}}
\newcommand{\Res}{\mathrm{Res}}
\newcommand{\orb}{ {\mathrm{CR}} }
\newcommand{\ch}{ {\mathrm{ch}} }
\newcommand{\age}{\mathrm{age}}

\newcommand{\tor}{ {\mathrm{tor}} }
\newcommand{\Tor}{ {\mathrm{Tor}} }
\newcommand{\Pic}{ {\mathrm{Pic}} }
\newcommand{\Nef}{ {\mathrm{Nef}} }
\newcommand{\NE}{ {\mathrm{NE}} }
\newcommand{\BoxS}{\mathrm{Box}(\mathbf \Si)}
\newcommand{\Boxs}{\mathrm{Box}(\si)}

\newcommand{\rig}{ {\mathrm{rig}} }
\newcommand{\can}{ {\mathrm{can}} }


\newcommand{\wb}{\mathsf{b}}

\newcommand{\wu}{\mathsf{u}}
\newcommand{\ww}{\mathsf{w}}
\newcommand{\wv}{\mathsf{v}}

\newcommand{\w}{\mathsf{w}}


\newcommand{\hB}{\hat{B}}

\newcommand{\hu}{\hat{u}}

\newcommand{\hcG}{\hat{\cG}}
\newcommand{\hcM}{\hat{\cM}}

\newcommand{\hGa}{ {\hat{\Ga}} }


\newcommand{\tC}{ {\widetilde{C}} }

\newcommand{\tcD}{ {\widetilde{\mathcal{D}}}}

\newcommand{\tL}{ {\widetilde{L}} }
\newcommand{\tM}{ {\widetilde{M}} }
\newcommand{\tN}{ {\widetilde{N}} }

\newcommand{\tV}{ {\widetilde{V}} }
\newcommand{\tbT}{ {\widetilde{\bT}} }
\newcommand{\tb}{ {\tilde{b}} }

\newcommand{\vt}{\check{t}}

\newcommand{\tbeta}{{\widetilde{\beta}}}

\newcommand{\tmu}{ {\widetilde{\mu}} }
\newcommand{\tnu}{\widetilde{\nu}}

\newcommand{\tGamma}{ {\tilde{\Gamma}} }
\newcommand{\tGammar}{ {\tilde{\Gamma}}_{\mathrm{red}} }
\newcommand{\tNef}{\widetilde{\Nef}}
\newcommand{\tNE}{\widetilde{\NE}}
\newcommand{\tSi}{\widetilde{\Sigma}}
\newcommand{\tTheta}{\widetilde{\Theta}}
\newcommand{\trho}{\widetilde{\rho}}

\newcommand{\tit}{\tilde{t}}

\newcommand{\inv}{\mathrm{inv}}
\newcommand{\eff}{\mathrm{eff}}
\newcommand{\pt}{\mathrm{point}}

\newcommand{\Mbar}{\overline{\cM}}

\newcommand{\vmu}{ {\vec{\mu}} }

\newcommand{\lra}{\longrightarrow}

\newtheorem{dummy}{dummy}[section]
\newtheorem{lemma}[dummy]{Lemma}
\newtheorem{theorem}[dummy]{Theorem}

\newtheorem{proposition}[dummy]{Proposition}

\theoremstyle{definition}
\newtheorem{definition}[dummy]{Definition}
\newtheorem{remark}[dummy]{Remark}
\newtheorem*{remark*}{Remark}
\newtheorem{example}[dummy]{Example}
\newtheorem{assumption}[dummy]{Assumption}

\begin{document}

\title[Open-Closed GW Invariants of Calabi-Yau Smooth Toric DM Stacks]{Open-Closed Gromov-Witten Invariants of
3-dimensional Calabi-Yau Smooth Toric DM Stacks}
\author{Bohan Fang}
\address{Bohan Fang, Beijing International Center for Mathematical Research, Peking University, 5 Yiheyuan Road, Beijing 100871, China}
\email{bohanfang@gmail.com}

\author{Chiu-Chu Melissa Liu}
\address{Chiu-Chu Melissa Liu, Department of Mathematics, Columbia University,
2990 Broadway, New York, NY 10027}
\email{ccliu@math.columbia.edu}

\author{Hsian-Hua Tseng}
\address{Hsian-Hua Tseng, Department of Mathematics, Ohio State University,
100 Math Tower, 231 West 18th Avenue, Columbus, OH 43210}
\email{hhtseng@math.ohio-state.edu}

\date{January 30, 2022}

\begin{abstract}
We study open-closed orbifold Gromov-Witten invariants of  3-dimensional Calabi-Yau smooth toric Deligne-Mumford stacks 
(with possibly non-trivial generic stabilizers $K$ and semi-projective coarse moduli spaces) relative to Lagrangian branes of Aganagic-Vafa type. An Aganagic-Vafa brane in this paper is
a possibly ineffective $C^\infty$ orbifold which admits a presentation $[(S^1\times \bR^2)/G_\tau]$, where $G_\tau$ is a finite abelian group
containing $K$ and $G_\tau/K \cong \bmu_\fm$ is cyclic of some order $\fm\in \bZ_{>0}$. 
\begin{enumerate}
\item We present foundational materials of enumerative geometry of stable holomorphic maps from bordered orbifold Riemann surfaces
to a 3-dimensional Calabi-Yau smooth toric DM stack $\cX$ with boundaries mapped into a Aganagic-Vafa brane $\cL$. All genus open-closed Gromov-Witten
invariants of $\cX$ relative to $\cL$ are defined by torus localization and depend on the choice of a framing $f\in \bZ$ of  $\cL$.

\item We provide another definition of all genus open-closed Gromov-Witten invariants in (1)
based on algebraic relative orbifold Gromov-Witten theory.  This generalizes the definition in \cite{LLLZ} for smooth toric Calabi-Yau 3-folds 
and specifies an orientation on moduli of maps in (1) compatible with the canonical orientation 
determined by the complex structure on moduli of relative stable maps.

\item When $\cX$ is a toric Calabi-Yau 3-orbifold (i.e. when the generic stabilizer $K$ is trivial), so that $G_\tau=\bmu_\fm$,
we define generating functions $F_{g,h}^{\cX,(\cL,f)}$ of 
open-closed Gromov-Witten invariants or arbitrary genus $g$ and number $h$ of boundary circles; it takes
values in $H^*_{\orb}(\cB \bmu_\fm;\bC)^{\otimes h}$ where $H^*_{\orb}(\cB \bmu_\fm;\bC)\cong \bC^\fm$ is the Chen-Ruan
orbifold cohomology of the classifying space $\cB \bmu_\fm$ of $\bmu_\fm$.

\item We prove an open mirror theorem which relates the generating function  $F_{0,1}^{\cX,(\cL,f)}$ of orbifold disk invariants
to Abel-Jacobi maps of the mirror curve of $\cX$. This generalizes a conjecture by Aganagic-Vafa \cite{AV00} and Aganagic-Klemm-Vafa \cite{AKV02}
(proved in full generality by the first and the second authors in \cite{FL}) on the disk potential of a smooth semi-projective toric Calabi-Yau 3-fold. 
\end{enumerate}
\end{abstract}

\maketitle


\section{Introduction}

Open Gromov-Witten (GW) invariants of toric Calabi-Yau 3-folds
have been studied extensively by both mathematicians and physicists.
They correspond to ``A-model topological open
string amplitudes'' in the physics literature and can be interpreted
as intersection numbers of certain moduli spaces of holomorphic maps
from bordered Riemann surfaces to the 3-fold with boundaries
in a Lagrangian submanifold. The physics prediction of these open GW 
invariants comes from string dualities: {\em mirror symmetry} relates
the A-model topological string theory of a Calabi-Yau 3-fold $X$ to the B-model topological string theory of the mirror Calabi-Yau 3-fold $\check{X}$; the
{\em large-$N$ duality} relates the A-model topological string theory on Calabi-Yau 3-folds
(of complex dimension three) to the Chern-Simons theory on
3-manifolds (of real dimension three). 

\subsection{Open GW invariants of smooth toric Calabi-Yau  3-folds} 
Aganagic-Vafa \cite{AV00} introduce a class of Lagrangian submanifolds in smooth semi-projective toric Calabi-Yau 3-folds, which are diffeomorphic to $S^1\times \bR^2$.
By mirror symmetry, Aganagic-Vafa and Aganagic-Klemm-Vafa \cite{AV00, AKV02} relate genus-zero open GW invariants (disk invariants) of a smooth
toric Calabi-Yau 3-fold $X$ relative to such a Lagrangian submanifold $L$ to the classical Abel-Jacobi map of the mirror Calabi-Yau 3-fold $\check{X}$,
which can be further related to the Abel-Jacobi map to the mirror curve of $X$. This conjecture is proved in full generality in \cite{FL}.

By the large-$N$ duality, Aganagic-Klemm-Mari\~{n}o-Vafa
propose the topological vertex \cite{AKMV}, an algorithm of computing all genera generating functions $F_{\beta',\mu_1, ..., \mu_h}$ of open
GW invariants of $(X,L)$ obtained by fixing a topological type of the map (determined by the degree $\beta'\in H_2(X,L;\bZ)$  and
winding numbers $\mu_1,\ldots, \mu_h\in H_1(L;\bZ)=\bZ$) and summing over the genus of the domain. The algorithm of the topological vertex is proved
in full generality in \cite{MOOP}.

Bouchard-Klemm-Mari\~{n}o-Pasquetti propose the Remodeling Conjecture \cite{BKMP1}, an algorithm of constructing the B-model topological open
string amplitudes in all genera of $\check{X}$ following \cite{Mar}, using Eynard-Orantin's topological recursion from the theory of matrix models \cite{EO07}.
Combined with the mirror symmetry prediction, this gives an algorithm of computing generating functions $F_{g,h}$ of open GW
invariants of $(X,L)$ obtained by fixing a topological type of the domain (determined by the genus $g$ and number $h$ of boundary circles) and summing
over the topological types of the map. Eynard-Orantin study the Remodeling Conjecture for
any smooth symplectic toric Calabi-Yau threefolds \cite{EO15}.


\subsection{Open GW invariants for 3-dimensional Calabi-Yau smooth DM stacks}
There have been attempts to generalize some of the above results to 3-dimensional Calabi-Yau smooth toric Deligne-Mumford (DM) stacks.
The closed GW theory of orbifolds has been studied for a long time. 
The physics literature dates back to early 1990s such as \cite{CV, Za}, which study the quantum cohomology ring of orbifolds.
The mathematical definition is given by Chen-Ruan \cite{CR02} for symplectic orbifolds and by Abramovich-Graber-Vistoli \cite{AGV02,AGV08} for smooth
DM stacks. Toric varieties are defined by a fan, while smooth toric DM stacks are defined by a stacky fan 
\cite{BCS05}. The coarse moduli of a smooth toric DM stack $\cX$
is a toric variety $X_\Si$ defined by a simplicial fan $\Si$.  A toric orbifold is
a smooth toric DM stack with trivial generic stabilizer. Any smooth toric DM stack $\cX$ is a $K$-gerbe over its rigidification $\cX^\rig$, where $K$ is the generic stabilizer (which 
is a finite abelian group) and $\cX^\rig$ is a toric orbifold. 

The definition of Aganagic-Vafa branes can be extended to the setting of 3-dimensional Calabi-Yau smooth toric DM stacks with semi-projective coarse moduli spaces. 
These branes are diffeomorphic to $[(S^1\times \bR^2)/G_\tau]$ where $G_\tau$ is a finite abelian group containing the generic stabilizer $K$. 
The open GW invariants of 3-dimensional Calabi-Yau smooth toric DM stacks are defined via
localization \cite{Ro14}, generalizing the methods in \cite{KL01}. By localization, open and closed GW invariants of a smooth toric Calabi-Yau 3-fold 
can be obtained by gluing the GW vertex, a generating function of open GW invariants of $\bC^3$, which can
be reduced to a generating function of certain cubic Hodge integrals \cite{DF}.
Similarly, open and closed orbifold GW invariants of a 3-dimensional Calabi-Yau smooth toric DM stack
can be obtained by gluing the orbifold GW vertex, a generating function of open GW invariants of $[\bC^3/G]$ (where $G$ is a 
finite abelian subgroup of $SL(3,\bC)$), which
can be reduced to a generating function of certain cubic abelian Hurwitz-Hodge integrals \cite{Ro14}.
The GW vertex has been evaluated in the general case \cite{LLLZ, MOOP}.
The orbifold GW vertex has been evaluated for  $[\bC^2/\bZ_n]\times \bC$  where
$\bC^2/\mathbb{Z}_n$ is the $A_{n-1}$ surface singularity \cite{Zo15, RZ13, RZ15, Ro15}, but not
in the general case.

As for mirror symmetry, a mirror theorem for disk invariants of $[\mathbb{C}^3/\mathbb{Z}_4]$ is proved in \cite{BC11}.
The Remodeling Conjecture is also expected to predict higher genus open GW invariants of toric Calabi-Yau
3-orbifolds via mirror symmetry \cite{BKMP1, BKMP2}.

\subsection{Summary of results}
In this paper we study open-closed orbifold GW invariants of a 3-dimensional Calabi-Yau smooth toric DM stack $\cX$ 
relative to an Aganagic-Vafa A-brane $\cL$ at all genera.

Open GW invariants of the pair $(\cX, \cL)$ 
count holomorphic maps from orbicurves to $\cX$ with boundaries mapped to $\cL$. Open-closed orbifold GW invariants
of the pair $(\cX,\cL)$ depend on the following discrete data: 
\begin{itemize}
\item[(i)] topological type $(g,h)$ of the domain orbicurve $(\Sigma,\partial \Sigma)$ , where $g$ is the genus and $h$ is the number of boundary holes;
\item[(ii)] number of interior marked points $n$;
\item[(iii)] topological type of the map $u:(\Si,\bSi =\coprod_{i=1}^h R_i )\to (\cX,\cL)$ given by the degree $\beta' =u_*[\Si]\in H_2(\cX,\cL;\bZ)$
and each $[u_*(R_i)]\in H_1(\cL;\bZ)\cong \bZ\times G_\tau$, collectively denoted by $\vmu=([u_*(R_1)],\dots, [u_*(R_h)])$;
\item[(iv)] framing $f\in \bZ$ of the Aganagic-Vafa A-brane $\cL$.
\end{itemize}
Let $\Mbar_{(g,h),n}(\cX,\cL\mid \beta',\vmu)$ be the moduli space parametrizing holomorphic maps with discrete data (i)-(iii).
We use the evaluation maps $\ev_i$, $i=1,\ldots, n$ at interior points to pull
back classes in the orbifold Chen-Ruan cohomology $H^*_\orb(\cX)$ of $\cX$ to obtain open-closed GW invariants. More precisely,
$\cL$ intersects a unique $1$-dimensional orbit $\fo_\tau \cong \bC^*\times \cB G_\tau$.
Given $\gamma_1,\ldots, \gamma_n\in H_\orb^*(\cX;\bQ)$, we define open-closed orbifold GW
invariant $\langle \gamma_1,\ldots,\gamma_n\rangle^{\cX,(\cL,f)}_{g,\beta',\vmu}$ via localization
using a circle action determined by the framing $f\in \bZ$; this is a rational number depending on $f$
and can be viewed as an equivariant invariant. We also provide another definition based on algebraic relative Gromov-Witten theory, which generalizes
the definition in \cite{LLLZ} for smooth toric Calabi-Yau 3-folds.

When $\cX$ is a symplectic toric Calabi-Yau 3-orbifolds (i.e. when the generic stabilizer $K$ is trivial), 
$G_\tau \cong \bmu_\fm$ is cyclic.
In this case, for each topological type $(g,h)$ of the domain bordered
Riemann surface we define a generating function $F_{g,h}^{\cX,(\cL,f)}$ of open-closed GW invariants which
takes value in $H^*_\orb(\cB G;\bC)^{\otimes h}$, where $H^*_\orb(\cB G;\bC)=\oplus_{\lambda \in G}\bC \mathbf \one_\lambda$.

In particular, the disk potential $F_{0,1}^{\cX,(\cL,f)}$ takes values in $H^*_\orb(\cB G;\bC)$.  
When $\cL$ is an outer brane\footnote{We work with both inner and outer branes. 
See Section \ref{sec:AV-brane} for the definition.}, the A-model disk potential is 
\[
F_{0,1}^{\cX,(\cL,f)}(\btau_2, X)=\sum_{\beta',n\geq 0} \sum_{(\mu, \lambda)\in H_1(\cL;\bZ)\cong \bZ\times G_\tau}
\frac{\langle (\btau_2)^n\rangle^{\cX,(\cL,f)}_{0,\beta',(\mu,\lambda)}}{n!} \cdot X^\mu
(\xi_0)^{\bar{\lambda}} \mathbf\one_{\lambda^{-1}}
\]
where $\btau_2$ is certain equivariant second Chen-Ruan cohomology class of $\cX$, $\xi_0$ is
an $\fm$-th root of $-1$, and $\bar{\lambda}\in \{0,1,\ldots,\fm-1\}$ corresponds to $\lambda\in G$ under a group isomorphism 
$G_\tau \cong \bZ/\fm\bZ$. 
The precise definition of $\btau_2$ and $F_{(0,1)}^{\cX,(\cL,f)}$ will be given in Section \ref{sec:Fgh}.
(Throughout the paper, $\beta'$ denotes a relative homology class in $H_2(\cX,\cL;\bZ)$ whereas $\beta$ denotes an absolute homology class in $H_2(\cX;\bZ)$.)
In this paper, we prove a mirror theorem regarding the disk potential
$F_{0,1}^{\cX,(\cL,f)}$ when $\cX$ is a semi-projective toric Calabi-Yau 3-orbifold.
Mirror symmetry relates the A-model topological string theory on a Calabi-Yau 3-fold to the B-model topological string theory on the mirror Calabi-Yau 
3-fold. The mirror of a semi-projective toric Calabi-Yau 3-fold is a Calabi-Yau hypersurface in $\bC^2\times (\bC^*)^2$ 
defined by an equation $uv=H(x,y,q)$, where $(u,v)\in \bC^2$, $(x,y)\in (\bC^*)^2$, and $q$ is the complex moduli parametrizing the B-model.
The function $H(x,y,q)$ is determined by both the combinatorial toric data of $\cX$ and the framed brane $(\cL,f)$. 
The affine curve $C_q=\{H(x,y,q)=0\}$ in $(\bC^*)^2$ is called the \emph{mirror curve}. 
We can fix a labeling of the $\fm$ points with $x=0$  on the mirror curve by 
the elements in $G_\tau^*=\Hom(G,\bC^*)\cong \bZ/\fm\bZ$. For each $\eta\in G_\tau^*$,  there is a small open 
neighborhood $U^\epsilon_\eta$ in the (compactified) mirror curve of the $x=0$ point labelled by $\eta$, and a branch
$(\log y)_{U_\eta^\epsilon}$ of $\log y$ defined on $U_\eta^\epsilon$, where  $y=y(x,q)$ is defined implicitly by 
the equation $H(x,y,q)=0$.  When $\cL$ is an outer brane, the closure of the
1-dimensional orbit intersecting $\cL$ contains a unique torus fixed (stacky) point $\fp_\si = \cB G_\si$, where
$G_\si$ is the inertia group of $\fp_\si$. With the above convention, we state our open mirror theorem as follows. 
\begin{theorem}\label{thm:main}
Under the closed mirror map $\btau_2=\btau_2(q)$ and the open mirror map  $X=xe^{A(q)}$ (the explicit formula of $\btau_2(q)$ and 
$A(q)$ will be given in Section \ref{sec:mirror}),
\[
x\frac{\partial}{\partial x}\bigg(\sum_{\eta \in \bmu^*_\fm}(\log y)_{U_\eta^\epsilon} (q,x) \phi_\eta \bigg)
=  \frac{\partial^2}{\partial x^2} F^{\cX,(\cL,f)}_{0,1}(\btau_2,X)
\]
where $\{\phi_\eta\}_{\eta\in G}$ is the canonical basis of $H^*_\orb(\cB G_\tau;\bC)$. 
\end{theorem}

\begin{remark}\label{Ke-Zhou}
The definition of the disk function $F_{0,1}^{\cX,(\cL,f)}$
and the formulation of the above Theorem 1.1 are slightly different from those in
the first version of this paper in 2012 \cite{FLT}, but the above Theorem \ref{thm:main} implies \cite[Theorem 1.1]{FLT}, which is used to prove an open version of
Ruan's Crepant Resolution Conjecture for disk
invariants of toric Calabi-Yau 3-orbifold relative to an effective outer Aganagic-Vafa brane
\cite{KZ15}.
\end{remark}

\subsection{Similar results for compact Lagrangian tori}
There are other open GW invariants relative to different types of Lagrangian submanifolds. C.-H. Cho \cite{Cho}
and J. Solomon \cite{So} define disk invariants of a compact symplectic manifold in real dimension four and six  relative to a Lagrangian submanifold which is the fixed locus of an anti-symplectic involution.
The mirror theorem for disk invariants for the quintic 3-fold relative
to the real quintic is conjectured in \cite{Wa} and proved in \cite{PSW}. It has been generalized to compact Calabi-Yau 3-folds which are projective complete
intersections \cite{PZ}, where a mirror theorem for genus one open GW invariants (annulus invariants) is also proved.

Open orbifold GW invariants of compact toric orbifolds with respect to Lagrangian torus fibers of the moment map
are defined in \cite{CP}, which generalizes the work of \cite{FOOO} on compact toric manifolds. The third author and collaborators prove mirror theorems on disk invariants in this context \cite{CLLT, CCLT1}. The third author and collaborators also prove mirror theorems on disk invariants of
toric Calabi-Yau manifolds/orbifolds (which must be non-compact) with respect to Lagrangian torus fibers of the Gross fibration \cite{CLT, CCLT2}. 

\subsection{Applications} The main theorem (Theorem \ref{thm:main}) of this paper has several applications. Here we mention two important applications.

\begin{itemize}
\item As mentioned in Remark \ref{Ke-Zhou} above, Theorem \ref{thm:main} has been applied to prove
an open version of Ruan's Crepant Resolution Conjecture for disk invariants of a  toric Calabi-Yau 3-orbifold relative to an effective outer Aganagic-Vafa brane \cite{KZ15}. 
Using Theorem \ref{thm:main}, S. Yu \cite{Yu} proves an open version of Crepant Transformation Conjecture (and in particular Crepant Resolution Conjecture) for
disk invariants of  a semi-projective  toric Calabi-Yau 3-orbifold relative to a general (effective or ineffective, inner or outer) Aganagic-Vafa brane defined in Section \ref{sec:AV-brane} of this paper. 
 This generalizes Open Crepant Resolution Conjecture (OCRC) for disk invariants of  $[\bC^2/\bZ_n]\times \bC$ relative to possibly ineffective Aganagic-Vafa branes proved in \cite{BCR}.

\item Recently, the first two authors and Zong prove the BKMP Remodeling Conjecture
for all semi-projective toric Calabi-Yau 3-orbifolds \cite{FLZ}. Theorem \ref{thm:main} is one of the
key ingredients of this proof.
\end{itemize}

\subsection{Overview of the paper}
The rest of the paper is organized as follows. In Section \ref{sec:toricDMstacks}  we review the necessary materials concerning smooth toric DM stacks. In Section \ref{sec:OCGW}
we apply localization to relate open-closed GW invariants and descendant GW invariants
of 3-dimensional smooth Calabi-Yau toric DM stacks. In Section \ref{sec:mirror} we prove a mirror theorem for orbifold disk invariants.

\subsection*{Acknowledgments}
The first author would like to thank Kwokwai Chan, Naichung Conan Leung and Yongbin Ruan for valuable discussions.
We wish to thank Song Yu for corrections to the previous version of this paper. We wish to thank anonymous referees for their comments and suggestions. 
The research of the first author was partially supported by  NSF DMS-1206667,
a start-up grant at Peking University, NSFC 11831017, NSFC 11890661, and NSFC 12125101.
The research of the second author was partially supported by NSF DMS-1159416
and NSF DMS-1206667. The research of the third author was partially supported by Simons Foundation Collaboration Grant and NSF DMS-1506551.

\section{Smooth Toric DM Stacks}\label{sec:toricDMstacks}
In this section, we follow the definitions in \cite[Section 3.1]{Ir09}, with slightly different
notation. We work over $\bC$.

\subsection{Definition} \label{sec:definition}
Let $N$ be a finitely generated abelian group, and let $N_\bR=N\otimes_\bZ\bR$. We have a short exact sequence of
(additive) abelian groups:
$$
0 \to N_\tor  \to N \to \bar{N}=N/N_\tor \to 0,
$$
where $N_\tor$ is the subgroup of torsion elements in $N$. Then $N_\tor$ is a finite abelian group, and
$\bar{N}=\bZ^n$, where $n=\dim_\bR N_\bR$. The natural projection $N\to \bar{N}$ is denoted $b\mapsto \bar{b}$.
A \emph{smooth toric DM stack} is an extension of toric varieties \cite{Fu93, BCS05}.
A smooth toric DM stack is given by the following data:
\begin{itemize}
\item $b_1,\dots, b_{r'} \in N$ which generate a subgroup of $N$ of  finite index, and
\item a simplicial fan $\Si$ in $N_\bR$ such that the set of $1$-cones is
$$
\{\rho_1,\dots,\rho_{r'}\},
$$
where $\rho_i=\bR_{\ge 0} \bar b_i$, $i=1,\dots, r'$.
\end{itemize}
The datum $\mathbf \Si=(\Si, (b_1,\dots, b_{r'}))$ is a \emph{stacky fan} in the sense of \cite{BCS05}.
The vectors $b_1,\dots, b_{r'}$ may or may not generate $N$; if they do not, we choose \emph{additional} vectors  $b_{r'+1},\dots, b_r$ such that $b_1,\dots, b_r$ generate $N$.
There is a surjective group homomorphism
\begin{eqnarray*}
\phi: & \tN :=\oplus_{i=1}^r \bZ\tb_i & \lra  N,\\
        &  \tb_i  & \mapsto b_i.
\end{eqnarray*}
Define $\bL :=\Ker(\phi) \cong \bZ^k$, where $k:=r-n$. Then
we have the following short exact sequence of finitely generated abelian groups:
\begin{equation}\label{eqn:NtN}
0\to \bL  \stackrel{\psi }{\lra} \tN  \stackrel{\phi}{\lra} N\to 0.
\end{equation}
Applying $ - \otimes_\bZ \bC^*$ to \eqref{eqn:NtN}, we obtain an exact
sequence of abelian groups:
\begin{equation}\label{eqn:bT}
1 \to K \to G \to \tbT\to \bT \to 1,
\end{equation}
where
\begin{eqnarray*}
\bT&:=& N\otimes_\bZ \bC^* =\bar{N}\otimes_\bZ \bC^* \cong (\bC^*)^n,\\
\tbT &:=& \tN\otimes_\bZ \bC^* \cong (\bC^*)^r,\\
G &:=& \bL\otimes_\bZ \bC^* \cong (\bC^*)^k,\\
K &:=&  \Tor_1^{\bZ}(N,\bC^*)\cong N_\tor.
\end{eqnarray*}
The action of $\tbT$ on itself extends to a $\tbT$-action on $\bC^r = \Spec\bC[Z_1,\dots, Z_r]$.
The torus $G$ acts on $\bC^r$ via the group homomorphism $G\to \tbT$ in \eqref{eqn:bT}, so $K\subset G$
acts on $\bC^r$ trivially. The isomorphism $K\cong N_\tor$ is not canonical. 

With the above preparation, we are now ready to define a smooth toric DM stack $\cX$.
Let
$$
\cA=\{I\subset \{1,\dots, r\}: \text{$\sum_{i\notin I} \bR_{\ge 0} \bar b_i$ is a cone of $\Si$}\}
$$
be the set of anti-cones; note that $\{r'+1,\ldots, r\} \subset I$ for any anti-cone $I\subset \cA$. 
Given $I\subset \{1,\ldots, r\}$, let $\bC^I$ be the subvariety of $\bC^r$ defined by the ideal in 
$\bC[Z_1,\ldots, Z_r]$ generated by $\{ Z_i \mid i\notin  I\}$.  
Define the smooth toric DM stack $\cX$ as the  quotient stack
$$
\cX:=[U_\cA/ G],
$$
where
$$
U_\cA:=\bC^r \backslash \bigcup_{I\notin \mathcal A} \bC^I 
= \bigcap_{I\notin \mathcal{A}} \left(\bC^r \backslash \bC^I\right).
$$
Note that for $i=r'+1,\ldots,r$, $\bR_{\geq 0} b_i$ is not a cone in $\Si$, so 
$\{ i\}' := \{1,\ldots, r\} \backslash\{i\} \notin \cA$. Therefore, 
$$
U_\cA \subset \bigcap_{i=r'+1}^r \left(\bC^r\backslash \bC^{\{i\}'}\right) =\bC^{r'}\times (\bC^*)^{r-r'}. 
$$ 
The stack $\cX$ contains the DM torus $\cT := [\tbT/G]$ as a dense open subset, and
the $\tbT$-action on $U_{\cA}$ descends to a $\cT$-action on $\cX$.
The smooth toric DM stack $\cX$ is a {\em toric orbifold} if the $G$-action
on $\tbT$ is free. 

Let $G^\rig = G/K$. Then $G^\rig$ acts freely on $\tbT$ and $\tbT/G^\rig =\bT$. The {\em rigidification} of the smooth toric DM stack $\cX$ is the toric orbifold
$$
\cX^{\rig} = [U_{\cA}/G^\rig]. 
$$
The coarse moduli space of the stack $\cX$ is the simplical toric variety $X_\Si$ defined by the simplicial fan $\Si$ in $N_\bR\cong \bR^n$.  
By \cite[Theorem I]{FMN}, the morphism $\cX\to X_\Si$ factors canonically via toric morphisms 
\begin{equation}\label{eqn:rigid}
\cX \lra \cX^{\rig} \lra \cX^{\can}\lra X_{\Si}
\end{equation}
where
\begin{itemize}
\item $\cX\lra \cX^\rig$ is a $K$-gerbe over $\cX^\rig$;
\item $\cX^\rig \lra \cX^\can$ is a fibered product of roots of toric divisors;
\item $\cX^\can\lra X_\Sigma$ is the minimal orbifold having $X_\Si$ as coarse moduil space.
\end{itemize}
Restricting \eqref{eqn:rigid} to the open substack $\cT \subset \cX$, one obtains
$\cT \cong \bT \times \cB K \lra \bT \lra \bT \lra \bT$,  
where $\bT\times \cB K \lra \bT$ is the projection to the first factor, and $\bT\lra \bT$ is the identity map.

\begin{remark}
The purpose of introducing additional vectors $b_{r'+1},\dots, b_{r}$ is to ensure $G$ is \emph{connected}.
The stacky fan $\mathbf \Si$ together with the extra vectors $b_{r'+1},\dots, b_r$ is an \emph{extended stacky fan} in the
sense of Jiang \cite{Ji08}. It follows from the definition that $\{r'+1,\dots, r\}\subset I$ for any $I\in \mathcal A$.
\end{remark}

Let $M$, $\tM$, and $\bL^\vee$ be the character lattices
of the tori $\bT$, $\tbT$, and $G$, respectively:
\begin{eqnarray*}
M &=& \Hom(N,\bZ)  = \Hom(\bT,\bC^*), \\
\tM &=& \Hom(\tN,\bZ)= \Hom(\tbT,\bC^*),\\
\bL^\vee &=& \Hom(\bL,\bZ) =\Hom(G,\bC^*).
\end{eqnarray*}
Applying $\Hom(-,\bZ)$ to \eqref{eqn:NtN}, we obtain the following
exact sequence of (additive) abelian groups:
\begin{equation}
  \label{eqn:exact-sequence-M}
  0  \lra M \stackrel{\phi^\vee}{\lra} \tM \stackrel{\psi^\vee}{\lra} \bL^\vee \lra \Ext^1(N,\bZ) \lra 0
\end{equation}
Therefore, the group homomorphism $\psi^\vee: \tM \lra \bL^\vee$
is surjective if and only if $N_\tor =0$.

We now consider a class of examples of 3-dimensional Calabi-Yau smooth toric DM stacks of the form $[\bC^3/\bZ_3]$. Let 
$\omega=e^{\frac{2\pi\sqrt{-1}}{3}}$ be the generator of $\bZ_3$. 
Given $i,j,k\in \{0,1,2\}$ such that $i+j+k\in 3\bZ$, we define
$\cX_{i,j,k}$ to be the quotient stack of the following
$\bZ_3$-action on $\bC^3$:
$$
\omega \cdot (Z_1,Z_2, Z_3) = (\omega^i Z_1, \omega^j Z_2, \omega^k Z_3).
$$
In the following example, we consider
$$
\cX_{1,1,1},\quad 
\cX_{1,2,0}=[\bC^2/\bZ_3]\times \bC,\quad 
\cX_{0,0,0}=\bC^3\times \cB\bZ_3.
$$
\begin{example}\label{ex:definition}
\begin{enumerate}
\item $\cX=\cX_{1,1,1}$. The toric data are given as follows.
\begin{gather*}
N=\bZ^3,\quad N_\tor=0;\\
b_1=(1,0,1),b_2=(0,1,1),b_3=(-1,-1,1),b_4=(0,0,1);\\ 
r=4,r'=3,k=1;\\
\Si=\{\text{the $3$-cone spanned by $\{b_1,b_2,b_3\}$, and its faces, and faces of faces, etc.}\};\\
\cA=\{I\subset\{1,2,3,4\}: 4\in I\};\\
\bL\cong \bZ,\quad \bL^\vee\cong \bZ;\\
\end{gather*}
\begin{figure}[h]
\begin{center}
\psfrag{KP2}{\footnotesize $\cO_{\bP^2}(-3)$}
\psfrag{C3Z3}{\footnotesize $\cX_{1,1,1}$}
\includegraphics[scale=0.6]{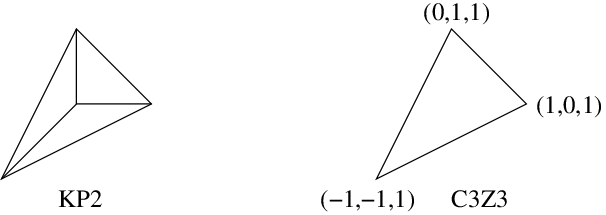}
\end{center}
\caption{$\cX_{1,1,1}$ and its crepant resolution $\cO_{\bP^2}(-3)$}
\end{figure}
\item $\cX=\cX_{1,2,0}$, transversal $A_2$-singularity. The toric data are given as follows.
\begin{gather*}
N=\bZ^3,\quad N_\tor =0;\\
b_1=(1,0,1),b_2=(0,3,1),b_3=(0,0,1),b_4=(0,1,1), b_5=(0,2,1);\\ 
r=5,r'=3,k=2;\\
\Si=\{\text{the $3$-cone spanned by $\{b_1,b_2,b_3\}$, and its faces, and faces of faces, etc.}\};\\
\cA=\{I\subset\{1,2,3,4,5\}: \{ 4, 5\} \subset I \};\\
\bL\cong \bZ^2,\quad \bL^\vee\cong \bZ^2.
\end{gather*}
\begin{figure}[h]
\begin{center}
\psfrag{X12}{\footnotesize $\cX_{1,2,0}$}
\includegraphics[scale=0.6]{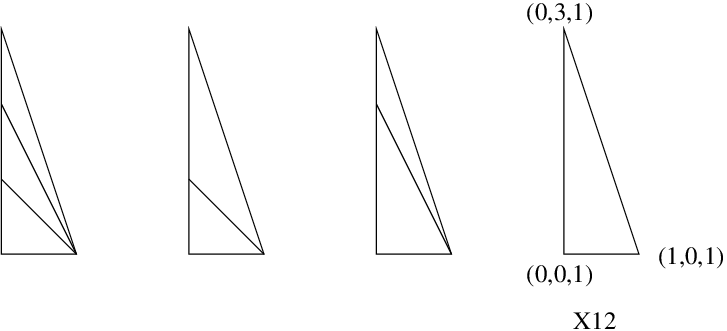}
\end{center}
\caption{$\cX_{1,2,0}$ and its (partial) crepant resolutions}
\end{figure}
\item $\cX=\cX_{0,0,0}$.  The toric data is given as follows.
\begin{gather*}
N=\bZ^3\oplus \bZ_3 \quad N_\tor =\bZ_3;\\
b_1=(1,0,0,0),b_2=(0,1,0,0),b_3=(0,0,1,0),b_4=(1,0,0,1);\\ 
r=4,r'=3,k=1;\\
\Si=\{\text{the $3$-cone spanned by $\{b_1,b_2,b_3\}$, and its faces, and faces of faces, etc.}\};\\
\cA=\{I\subset\{1,2,3,4\}: 4\in I \};\\
\bL\cong \bZ,\quad \bL^\vee\cong \bZ.
\end{gather*}
\end{enumerate}
\end{example}

\subsection{Equivariant line bundles and torus-invariant Cartier divisors}

A character $\chi\in \tM$ gives a $\tbT$-action on $\bC^r \times \bC$ by
$$
(\tilde{t}_1,\ldots, \tilde{t}_r)\cdot (Z_1,\ldots, Z_r, u)
=(\tilde{t}_1 Z_1,\ldots, \tilde{t}_r Z_r, \chi(\tilde{t}_1,\ldots, \tilde{t}_r) u),
$$
where
$$
(\tilde{t}_1,\ldots, \tilde{t}_r)\in \tbT \cong (\bC^*)^r, \quad
(Z_1,\ldots, Z_r)\in \bC^r,\quad u\in \bC.
$$
Therefore $\bC^r\times \bC$ can be viewed as the total space
of a $\tbT$-equivariant line bundle $\tL_\chi$ over $\bC^r$.
If
$$
\chi(\tilde{t}_1,\ldots, \tilde{t}_r) =\prod_{i=1}^r \tilde{t}_i^{c_i},
$$
where $c_1,\ldots, c_r\in \bZ$,  then
$$
\tL_{\chi}= \cO_{\bC^r}(\sum_{i=1}^r c_i \tcD_i),
$$
where $\tcD_i$ is the $\tbT$-divisor in $\bC^r$ defined by $Z_i=0$.
We have
$$
\tM \cong \Pic_{\tbT}(\bC^r) \cong H^2_{\tbT}(\bC^r;\bZ),
$$
where the first isomorphism is given by $\chi\mapsto \tL_\chi$
and the second isomorphism is given by the $\tbT$-equivariant
first Chern class $(c_1)_{\tbT}$.  Define
$$
D_i^\cT:= (c_1)_{\tbT}(\cO_{\bC^r}(\tcD_i)) \in H^2_{\tbT}(\bC^r;\bZ) \cong H^2_{\cT}([\bC^r/G];\bZ).
$$
Then $\{ D_1^\cT,\ldots, D_r^\cT \}$ is a $\bZ$-basis of
$H^2_{\tbT}(\bC^r;\bZ)\cong \tM$ dual to the $\bZ$-basis
$\{ \tb_1,\ldots, \tb_r\}$ of $\tN$.
We have a commutative diagram
$$
\begin{CD}
\Pic_{\tbT}(\bC^r) @>{\iota_{\cT}^*}>>  \Pic_{\tbT}(U_\cA) @>{\cong}>>\Pic_{\cT}(\cX) \\
@V{(c_1)_{\tbT}}VV  @V_{(c_1)_{\tbT}}VV @V_{(c_1)_{\cT}}VV \\
H^2_{\tbT}(\bC^r;\bZ) @>{\iota_{\cT}^*}>> H^2_{\tbT}(U_\cA;\bZ) @>{\cong}>> H^2_{\cT}(\cX;\bZ),
\end{CD}
$$
where $\iota_{\cT}^*$ is a surjective group homomorphism induced
by the inclusion $\iota: U_\cA\hookrightarrow \bC^r$, and
$$
\Ker(\iota_{\cT}^*)= \bigoplus_{i=r'+1}^r \bZ D_i^{\cT}
$$
Therefore,
$$
\Pic_{\cT}(\cX)\cong H^2_{\cT}(\cX;\bZ) \cong \tM/\oplus_{i=r'+1}^r \bZ D^\cT_i
$$
Let $\bar{D}_i^\cT := \iota_\cT^* D_i^{\cT}$. Then
$$
\bar{D}_i^\cT =0,\quad i=r'+1,\ldots, r,
$$
and
$$
H^2_{\cT}(\cX;\bZ) = \bigoplus_{i=1}^{r'} \bZ \bar{D}^\cT_i \cong\bZ^{r'}.
$$
For $i=1,\ldots,r'$, $\tcD_i\cap U_\cA$ is a $\tbT$-divisor in $U_\cA$, and it descends
to a $\bT$-divisor $\cD_i$ in $\cX$. We have
$$
\bar{D}_i^{\cT}=(c_1)_{\cT}(\cO_{\cX}(\cD_i)),\quad i=1,\ldots, r'.
$$
For $i=r'+1,\ldots, r$, $\tcD_i\cap U_\cA$ is empty, so it is the zero $\tbT$-divisor.

\subsection{Line bundles and Cartier divisors}
We have group isomorphisms
$$
\bL^\vee \cong \Pic_G(\bC^r) \cong H^2_G(\bC^r;\bZ),
$$
where the first isomorphism is given by $\chi\in \bL^\vee=\Hom(G,\bC^*)\mapsto \tL_\chi$, and
the second isomorphism is given by the $G$-equivariant first Chern class $(c_1)_G$.
We have a commutative diagram
$$
\begin{CD}
\Pic_G(\bC^r) @>{\iota^*}>>  \Pic_G(U_\cA) @>{\cong}>>\Pic(\cX) \\
@V{(c_1)_G}VV  @V_{(c_1)_G}VV @V_{c_1}VV \\
H^2_G(\bC^r;\bZ) @>{\iota^*}>> H^2_G(U_\cA;\bZ) @>{\cong}>> H^2(\cX;\bZ),
\end{CD}
$$
where $\iota^*$ is a surjective group homomorphism induced
by the inclusion $\iota: U_\cA\hookrightarrow \bC^r$.
The surjective map $H^2_G(\bC^r;\bZ)\to H^2(\cX;\bZ)$ is the restriction
of the Kirwan map
$$
\kappa: H^*_G(\bC^r;\bZ)\lra H^*(\cX;\bZ).
$$
Define
$$
D_i:= (c_1)_G(\cO_{\bC^r}(\tcD_i)) \in H^2_G(\bC^r;\bZ) \cong H^2([\bC^r/G];\bZ).
$$
Then
$$
\Ker(\iota^*)= \bigoplus_{i=r'+1}^r \bZ D_i.
$$
Therefore,
$$
\Pic(\cX)\cong H^2(\cX;\bZ) \cong \bL^\vee/\oplus_{i=r'+1}^r \bZ D_i.
$$
Recall that $\psi^\vee: \tM\lra \bL^\vee$
is surjective if and only if $N_\tor=0$.
Let
$$
\bar{D}_i = c_1(\cO_{\cX}(\cD_i))\in H^2(\cX;\bZ),\quad i=1,\ldots, r.
$$
The map
$$
\bar{\psi}^\vee: \Pic_{\cT}(\cX)\cong H^2_{\cT}(\cX;\bZ) \to \Pic(\cX)\cong H^2(\cX;\bZ),
$$
given by
$$
\bar{D}_i^\cT \mapsto \bar{D}_i,\quad i=1,\ldots, r',
$$
is surjective if and only if $N_\tor=0$. In general,
$\Coker(\psi^\vee)\cong \Coker(\bar{\psi}^\vee)$ is a finite abelian group.

Pick a $\bZ$-basis $\{e_1,\ldots, e_k\}$ of $\bL\cong \bZ^k$, and let
$\{e_1^\vee,\ldots, e_k^\vee\}$ be the dual $\bZ$-basis of $\bL^\vee$. For each
$a\in \{1,\ldots,k\}$, we define a {\em charge vector}
$$
l^{(a)}= (l^{(a)}_1,\ldots, l^{(a)}_r) \in \bZ^r
$$
by
$$
\psi(e_a)=\sum_{i=1}^r l^{(a)}_i \tb_i,
$$
where $\psi: \bL \lra \tN$ is the inclusion map.
Then
$$
D_i=\psi^\vee(D_i^\cT)=\sum_{a=1}^k l^{(a)}_i e_a^\vee,\quad i=1,\ldots,r,
$$
and
$$
\sum_{i=1}^r l^{(a)}_{i} b_{i} = \phi\circ \psi(e_a)=0,\quad a=1,\ldots, k.
$$

\begin{example}\label{ex:picard}
We use the notation in Example \ref{ex:definition}.
\begin{enumerate}
\item $\cX=\cX_{1,1,1}$.
\begin{gather*}
D_1=D_2=D_3=1,\ D_4=-3;\\
l^{(1)}=(1,1,1,-3);\\
\Pic_{\cT}(\cX)\cong \bZ^3,\quad  \Pic(\cX)\cong \bZ/3\bZ;
\end{gather*}
\item $\cX=\cX_{1,2,0}$.
\begin{gather*}
D_1=(0,0),\  D_2= (0,1),\ D_3=(1,0),\ D_4=(-2,1),\ D_5=(1,-2);\\
l^{(1)}=(0,0,1,-2,1),\  l^{(2)}= (0,1,0,1,-2);\\
\Pic_{\cT}(\cX)=\bZ^3,\quad \Pic(\cX) =\bZ^2 \big/ \big(\bZ(-2,1)\oplus \bZ(1,-2)\big) \cong \bZ/3\bZ.
\end{gather*}
\item $\cX=\cX_{0,0,0}$.
\begin{gather*}
D_1=3,\  D_2= 0 ,\ D_3= 0,\ D_4=-3;\\
l^{(1)}=(3,0,0,-3);\\
\Pic_{\cT}(\cX)=\bZ^3,\quad \Pic(\cX) = \bZ/3\bZ.
\end{gather*}
\end{enumerate}
\end{example}

\subsection{Torus invariant subvarieties and their generic stabilizers}\label{sec:T-subvariety}
Let $\Si(d)$ be the set of $d$-dimensional cones. For each $\si\in \Si(d)$, we 
define
$$
I_\si :=\{ i\in \{1,\ldots,r\}\mid \rho_i\not\subset \si \} \in \cA, \quad I_\si' := \{1,\ldots, r\}\setminus I_\si.
$$
Then $|I_\si'|=d$ and $|I_\si|=r-d$. Let $\tilde{V}(\si)\subset U_\cA$ be the closed subvariety defined by the ideal of $\bC[Z_1,\ldots, Z_r]$ generated by
$$
\{Z_i=0\mid \rho_i\subset \si\} = \{ Z_i=0\mid i \in I'_\si\}.
$$
Then $\cV(\si) := [\tV(\si)/G]$ is an $(n-d)$-dimensional $\cT$-invariant closed
substack of $\cX= [U_\cA/G]$.

The group homomorphism $G\cong (\bC^*)^k \lra \tbT\cong (\bC^*)^r$ is given by
$$
g\mapsto (\chi_1(g),\ldots, \chi_r(g)),
$$
where $\chi_i\in \Hom(G,\bC^*)=\bL^\vee$ is given by
$$
\chi_i(u_1,\ldots, u_k)= \prod_{a=1}^k u_a^{ l_i^{(a)} }.
$$
Let
$$
G_\si:= \{ g\in G\mid g\cdot z = z \textup{ for all } z\in \tV(\si)\} =\bigcap_{i\in I_\si} \Ker(\chi_i).
$$
Then $G_\si$ is the generic stabilizer of $\cV(\si)$. It is a finite subgroup of $G$.
If $\tau\subset \si$ then $I_\si\subset I_\tau$, so $G_\tau\subset G_\si$. There
are two special cases:
\begin{itemize}
\item Let $\{0\}$ be the unique 0-dimensional cone. Then $G_{\{0\}}=K$ is the generic
stabilizer of $\cV(\{0\})=\cX$.
\item If $\si\in \Si(n)$ where $n=\dim_\bC \cX$, then $\fp_\si:= \cV(\si)$ is a $\cT$ fixed point in $\cX$, and
$\fp_\si=\cB G_\si$.
\end{itemize}

\begin{example}\label{ex:stabilizer}
We use the notation in Example \ref{ex:definition}.
Let $\si\subset N_\bR\cong \bR^3$ denote the 3-dimensional cone spanned by $\bar{b}_1,\bar{b}_2,\bar{b}_3$.
For $j=1,2,3$, let $\tau_j$ denote the 2-dimensional cone in $N_\bR$
spanned by $\{\bar{b}_i: i\in \{1,2,3\}-\{j\}\}$.
\begin{enumerate}
\item $\cX=\cX_{1,1,1}$:
$G_\si=\bZ_3,\quad G_{\tau_1}=G_{\tau_2}=G_{\tau_3}=\{1\}$.
\item $\cX=\cX_{1,2,0}$:
$G_\si=\bZ_3 = G_{\tau_3}, \quad G_{\tau_1}=G_{\tau_2}=\{1\}$.
\item $\cX=\cX_{0,0,0}$:
$G_\si=\bZ_3 = G_{\tau_1}=G_{\tau_2}=G_{\tau_3}$.
\end{enumerate}
\end{example}

Define the set of flags in $\Si$ to be
$$
F(\Si)=\{(\tau,\si)\in \Si(n-1)\times \Si(n): \tau \subset \si\}.
$$
Given $(\tau,\si)\in F(\Si)$, let $\fl_\tau:=\cV(\tau)$ be the 1-dimensional
$\cT$-invariant subvariety of $\cX$. Then $\fp_\si$ is contained in $\fl_\tau$.
There is a unique $i\in \{1,\ldots, r'\}$ such  that $i\in I'_\si \setminus I'_\tau$.
The representation of $G_\si$ on the tangent line
$T_{\fp_\si}\fl_\tau$ to $\fl_\tau$ at the stacky point $\fp_\si$ is given
by $\chi_{(\tau,\si)}:= \chi_i|_{G_\si}: G_\si\to \bC^*$. The image $\chi_i(G_\si) \subset \bC^*$
is a cyclic subgroup of $\bC^*$; we define the order of this group to be $r(\tau,\si)$.
Then there is a short exact sequence of finite abelian groups:
$$
1\to G_\tau\lra G_\si \stackrel{\chi_{(\tau,\si)} }{\lra} \bmu_{r(\tau,\si)}\to 1,
$$
where $\bmu_a=\{ z\in \bC^*\mid z^a=1\}$ is the group of $a$-th roots of unity.

\subsection{The extended nef cone and the extended Mori cone} \label{sec:nef-NE}
In this paragraph, $\bF=\bQ$, $\bR$, or $\bC$.
Given a finitely generated abelian group $\Lambda$ with $\Lambda/\Lambda_\tor \cong \bZ^m$, define
$\Lambda_\bF= \Lambda\otimes_\bZ \bF \cong \bF^m$.
We have the following short exact sequences of vector spaces:
\begin{eqnarray*}
&& 0\to \bL_\bF\to \tN_\bF \to N_\bF \to 0,\\
&& 0\to  M_\bF\to \tM_\bF\to \bL^\vee_\bF\to 0.
\end{eqnarray*}
We also have the following isomorphisms of vector spaces over $\bF$:
\begin{eqnarray*}
&& H^2(\cX;\bF)\cong H^2(X;\bF) \cong \bL^\vee_\bF/\oplus_{i=r'+1}^r \bF D_i,\\
&& H^2_{\cT}(\cX;\bF) \cong H^2_\bT(X;\bF) \cong \tM_\bF/\oplus_{i=r'+1}^r \bF D_i^\cT,
\end{eqnarray*}
where $X$ is the coarse moduli space of $\cX$.

From now on, we assume all the maximal cones in $\Si$ are $n$-dimensional, where
$n=\dim_\bC \cX$. Given a maximal cone $\si\in \Si(n)$, we define
$$
\bK_\si^\vee := \bigoplus_{i\in I_\si }\bZ D_i.
$$
Then $\bK_\si^\vee$ is a sublattice of $\bL^\vee$ of finite index.
We define the {\em extended  $\si$-nef cone} to be
$$
\tNef_\si = \sum_{i\in I_\si}\bR_{\geq 0} D_i,
$$
which is a $k$-dimensional cone in $\bL^\vee_\bR  \cong \bR^k$. The {\em extended nef cone} of
the extended stacky fan $(\Si,b_1,\ldots, b_r)$ is
$$
\tNef_{\cX}:=\bigcap_{\si\in \Si(n)} \tNef_\si.
$$
The {\em extended $\si$-K\"{a}hler cone} $\tC_\si$ is defined to be the interior of
$\tNef_\si$; the {\em extended K\"{a}hler cone} of $\cX$, $\tC_{\cX}$,  is defined
to be the interior of the extended nef cone $\tNef_{\cX}$.

Let $\bK_\si $ be the dual lattice of $\bK_\si^\vee$; it can be viewed as an additive subgroup of $\bL_\bQ$:
$$
\bK_\si =\{ \beta\in \bL_\bQ \mid \langle D, \beta\rangle \in \bZ \  \forall D\in \bK_\si^\vee \},
$$
where $\langle-, -\rangle$ is the natural pairing between $\bL^\vee_\bQ$ and $\bL_\bQ$.
Define
$$
\bK:= \bigcup_{\si\in \Si(n)} \bK_\si.
$$
Then $\bK$ is a subset (which is not necessarily a subgroup) of $\bL_\bQ$, and $\bL\subset \bK$.

We define the {\em extended $\si$-Mori cone} $\tNE_\si\subset \bL_\bR$ to be the dual cone of
$\tNef_\si\subset \bL_\bR^\vee$:
$$
\tNE_\si=\{ \beta \in \bL_\bR\mid \langle D,\beta\rangle \geq 0 \ \forall D\in \tNef_\si\}.
$$
It is a $k$-dimensional cone in $\bL_\bR$. The {\em extended Mori cone} of
the extended stacky fan $(\Si,b_1,\ldots, b_r)$ is
$$
\tNE_{\cX}:= \bigcup_{\si\in \Si(n)} \tNE_\si.
$$
Finally, we define
$$
\bK_{\eff,\si}:= \bK_\si\cap \tNE_\si,\quad \bK_{\eff}:=
\bK\cap \tNE(\cX)= \bigcup_{\si\in \Si(n)} \bK_{\eff,\si}.
$$

\begin{example}\label{ex:effective}
\begin{enumerate}
\item $\cX=\cX_{1,1,1}$.
\begin{gather*}
\bK^\vee \cong 3\bZ, \quad \tNef_{\cX}=\bR_{\leq 0};\\
\bK \cong \frac{1}{3}\bZ , \quad \tNE_{\cX}=\bR_{\leq 0},\quad \bK_\eff=\frac{1}{3}\bZ_{\leq 0}.
\end{gather*}
\begin{figure}[h]
\begin{center}
\includegraphics[scale=0.6]{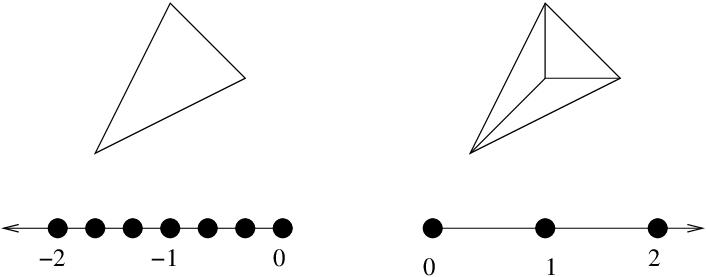}
\end{center}
\caption{$\bK_\eff$ of $\cX_{1,1,1}$ and its crepant resolution $\cO_{\bP^2}(-3)$}
\end{figure}

\item $\cX=\cX_{1,2,0}$.
\begin{gather*}
\bK^\vee\cong \bZ (-2,1)\oplus \bZ(1,-2),\quad \tNef_{\cX}= \bR_{\geq 0}(-2,1) +\bR_{\geq 0}(1,-2);\\
\bK\cong \bZ(-\frac{2}{3}, -\frac{1}{3})\oplus \bZ(-\frac{1}{3}, -\frac{2}{3}), \quad
\tNE_{\cX} = \bR_{\geq 0}(-\frac{2}{3}, -\frac{1}{3}) +  \bR_{\geq 0}(-\frac{1}{3}, -\frac{2}{3}),\\
\bK_\eff =\bZ_{\geq  0}(-\frac{2}{3},-\frac{1}{3}) +  \bZ_{\geq 0}(-\frac{1}{3},-\frac{2}{3}).
\end{gather*}
\begin{figure}[h]
\begin{center}
\includegraphics[scale=0.6]{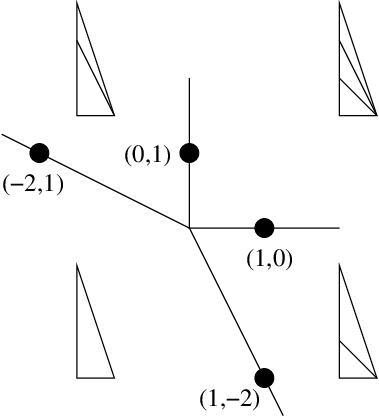}
\end{center}
\caption{The secondary fan of the crepant resolution of $\cX_{1,2,0}$}
\end{figure}
\begin{figure}[h]
\begin{center}
\includegraphics[scale=0.6]{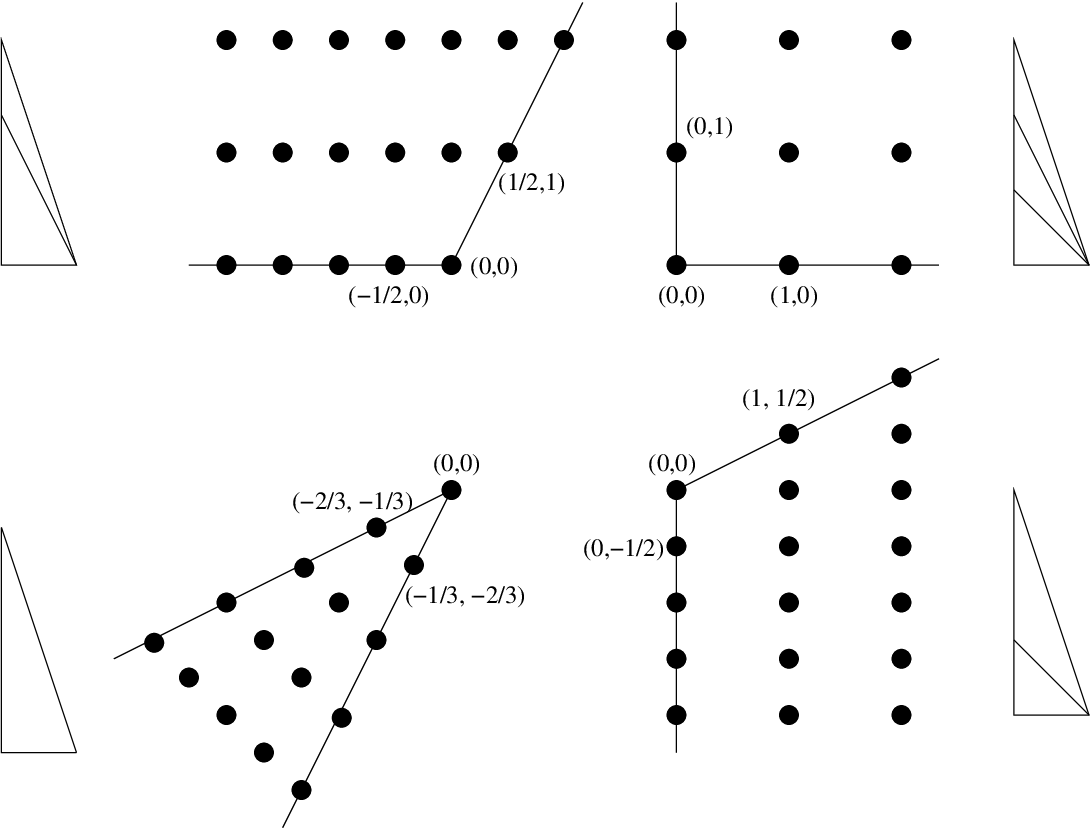}
\end{center}
\caption{$\bK_\eff$ of $\cX_{1,2,0}$ and its (partial) crepant resolutions}
\end{figure}

\item $\cX=\cX_{0,0,0}$.
\begin{gather*}
\bK^\vee\cong 3\bZ,\quad \tNef_{\cX}= \bR_{\leq 0};\\
\bK\cong \frac{1}{3}\bZ, \quad \tNE_{\cX} = \bR_{\leq 0},\quad \bK_\eff =\frac{1}{3}\bZ_{\leq 0}.
\end{gather*}

\end{enumerate}

\end{example}

\begin{assumption} \label{semi-proj}
From now on, we make the following assumptions on $\cX$.
\begin{enumerate}
\item[(a)] The coarse moduli space $X_\Si$ of $\cX$ is semi-projective.
\item[(b)] We may choose $b_{r'+1},\ldots, b_r$ such that
$\hat \rho:=D_1+\dots + D_r$ is contained in the closure
of the extended K\"ahler cone $\tC_\cX$.
\end{enumerate}
\end{assumption}

\begin{remark}
\begin{enumerate}
\item We make the above assumptions (a) and (b) so that the equivariant mirror theorem \cite[Theorem 31]{CCIT}
takes a particularly simple form. See Section \ref{sec:I} in this paper for the precise statement. 

\item By \cite[Proposition 14.4.1]{CLS}, $X_\Si$ is semi-projective if and only if $|\Si|$ is equal to the
cone spanned by $b_1,\ldots, b_r$. For example, the total space of $\cO_{\bP^1}(-3)\oplus \cO_{\bP^1}(1)$ is
a smooth toric Calabi-Yau 3-fold which is not semi-projective:
\begin{figure}[h]
\begin{center}
\includegraphics[scale=0.5]{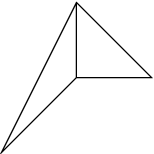}
\end{center}
\caption{$\cO_{\bP^1}(-3)\oplus \cO_{\bP^1}(1)$}
\end{figure}

\item When $\cX$ is a Calabi-Yau smooth toric DM stack, Assumption (b) holds if its coarse moduli space $X_\Si$
has a toric crepant resolution of singularities; see \cite[Remark 3.4]{Ir09}.  By \cite[Proposition 11.4.19]{CLS}, any 3-dimensional
Gorenstein toric variety $X_\Si$ has a resolution of singularities $\phi:X_{\Si'}\to X_{\Si}$ such that $\phi$ is projective and
crepant. So Assumption \ref{semi-proj} (b) holds for any 3-dimensional Calabi-Yau smooth toric DM stacks.
\end{enumerate}
\end{remark}

\subsection{Smooth toric DM stacks as symplectic quotients}

Let $G_\bR\cong U(1)^k$ be the maximal compact subgroup of $G\cong (\bC^*)^k$. Then
the Lie algebra of $G_\bR$ is $\bL_\bR$. Let
$$
\tmu: \bC^r\to \bL^\vee_\bR =\bigoplus_{a=1}^k \bR e_a ^\vee
$$
be the moment map of the Hamiltonian $G_\bR$-action on $\bC^r$, equipped with
the K\"{a}hler form
$$
\sqrt{-1} \sum_{i=1}^r dZ_i\wedge d\bar{Z}_i.
$$
Then
$$
\tmu(Z_1,\ldots, Z_r) = \sum_{i=1}^r \sum_{a=1}^k l_i^{(a)}|Z_i|^2 e^\vee_a.
$$
If $\bfr=\sum_{a=1}^k r_a e_a^\vee$ is in the extended K\"{a}hler cone of $\cX$, then
$$
\cX =[\tmu^{-1}(\bfr)/G_\bR].
$$
The generic stabilizer $K$ (which is a finite subgroup of $G\cong (\bC^*)^k$) is contained in the maximal compact subgroup $G_\bR$ of $G$. The quotient 
$G_\bR^\rig:= G_\bR/K\cong U(1)^k$
is the maximal compact subgroup of $G^\rig=G/K\cong (\bC^*)^k$, and 
$$
\cX^\rig =[\tmu^{-1}(\bfr)/G_\bR^\rig]
$$
as a symplectic quotient.

The real numbers $r_1,\ldots, r_k$ are extended K\"{a}hler parameters. The symplectic structure $\omega(\bfr)$ depends on $\bfr$. The map $\bfr\mapsto [\omega(\bfr)]$ is given by $\bL^\vee_\bR \to H^2(\cX;\bR)$.
Let $T_a=-r_a+\sqrt{-1}\theta_a$ be complexified extended K\"{a}hler parameters of $\cX$.

\subsection{The inertia stack and the Chen-Ruan orbifold cohomology} \label{sec:CR}
Given $\si\in \Si$, define
$$
\Boxs:=\Big\{ v\in N: \bar{v}=\sum_{i\in I'_\si} c_i \bar{b}_i, \quad 0\leq c_i <1\Big\}.
$$
Then $N_\tor\subset \Boxs\subset N$. If $\si$ is a $d$-dimensional cone, then the set
$\{ \sum_{i\in I'_\si} c_i \bar{b}_i: c_i\in \bR, 0\leq c_i<1 \}$
is a fundamental domain of the action of $\displaystyle{ \bar{N}_\si =\oplus_{i\in I'_\si}\bZ \bar{b}_i \cong \bZ^d }$ on 
$\displaystyle{ N_\si\otimes_{\bZ}\bR = \oplus_{i\in I'_{\si}}\bR\bar{b}_i\cong \bR^d}$.
If $\tau\subset \sigma$ then $I'_\tau\subset I'_\si$, so $\mathrm{Box}(\tau)\subset \Boxs$.

Let $\si\in \Si(n)$ be a maximal cone in $\Si$. We
have a short exact sequence of abelian groups
$$
0\to \bK_\si/\bL\to \bL_\bR/\bL\to \bL_\bR/\bK_\si\to 0,
$$
which can be identified with the following short exact sequence of multiplicative abelian groups
$$
1\to G_\si\to G_\bR \to (G/G_\si)_\bR\to 1
$$
where $G_\bR \cong U(1)^k$ is the maximal compact subgroup of $G\cong (\bC^*)^k$,
and $(G/G_\si)_\bR\cong U(1)^k$ is the maximal compact subgroup of $(G/G_\si)\cong(\bC^*)^k$.

Given a real number $x$, we recall some standard notation:
$\lfloor x \rfloor$ is the greatest integer less than or equal to  $x$,
$\lceil x \rceil$ is the least integer greater or equal to $x$,
and $\{ x\} = x-\lfloor x \rfloor$ is the fractional part of $x$.
Define $v: \bK_\si\to N$ by
$$
v(\beta)= \sum_{i=1}^r \lceil \langle D_i,\beta\rangle\rceil b_i.
$$
Then
$$
\overline{v(\beta)} = \sum_{i\in I'_\si} \{ -\langle D_i,\beta\rangle \}\bar{b}_i,
$$
so $v(\beta)\in \Boxs$. Indeed, $v$ induces a bijection $K_\si/\bL\cong \Boxs$.

For any $\tau\in \Si$ there exists $\si\in \Si(n)$ such that
$\tau\subset \si$. The bijection $G_\si \to \Boxs$ restricts
to a bijection $G_\tau\to \mathrm{Box}(\tau)$.

Define
$$
\BoxS:=\bigcup_{\si\in \Si}\Boxs =\bigcup_{\si\in\Si(n)}\Boxs.
$$
Then $N_\tor \subset \BoxS \subset N$. There is a bijection $\bK/\bL\to \BoxS$.

Given $v\in \Boxs$, where $\si\in \Si(d)$, define $c_i(v)\in [0,1)\cap \bQ$ by
$$
\bar{v}= \sum_{i\in I'_\si} c_i(v) \bar{b}_i.
$$
Suppose that  $k \in G_\si$  corresponds to $v\in \Boxs$ under the bijection $G_\si\cong\Boxs$, then
$$
\chi_i(k) = \begin{cases}
1, & i\in I_\si,\\
e^{2\pi\sqrt{-1} c_i(v)},& i \in I'_\si.
\end{cases}
$$
Define
$$
\age(k)=\age(v)= \sum_{i\notin I_\si} c_i(v).
$$

Let $IU=\{(z,k)\in U_\cA\times G\mid k\cdot z = z\}$,
and let $G$ act on $IU$ by $h\cdot(z,k)= (h\cdot z,k)$. The
inertia stack $\cI\cX$ of $\cX$ is defined to be the quotient stack
$$
\cI\cX:= [IU/G].
$$
Note that $(z=(Z_1,\ldots,Z_r), k)\in IU$ if and only if
$$
k\in \bigcup_{\si\in \Si}G_\si \textup{ and }  Z_i=0 \textup{ whenever } \chi_i(k) \neq 1.
$$
So
$$
IU=\bigcup_{v\in \BoxS} U_v,
$$
where
$$
U_v:= \{(Z_1,\ldots, Z_m)\in U_\cA: Z_i=0 \textup{ if } c_i(v) \neq 0\}.
$$
The connected components of $\cI\cX$ are
$$
\{ \cX_v:= [U_v/G] : v\in \BoxS\}.
$$
The involution $IU\to IU$, $(z,k)\mapsto (z,k^{-1})$ induces involutions
$\inv:\cI\cX\to \cI\cX$ and $\inv:\BoxS\to \BoxS$ such that
$\inv(\cX_v)=\cX_{\inv(v)}$.

In the remainder of this subsection, we consider rational cohomology, and
write $H^*(-)$ instead of $H^*(-;\bQ)$.

As a graded vector space over $\bQ$ (and as the state-space of the relevant quantum theory in physics \cite{Za}), the Chen-Ruan orbifold cohomology \cite{CR04} is defined to be
$$
H^*_\orb (\cX)=\bigoplus_{v\in \BoxS}  H^*(\cX_v)[2\age(v)].
$$
Let $\mathbf 1_v$ be the unit in $H^*(\cX_v)$. Then $\mathbf 1_v\in H^{2\age(v)}_\orb(\cX)$. In particular,
$$
H^0_\orb(\cX) =\bigoplus_{v\in N_\tor} \bQ \mathbf 1_v.
$$
Suppose that $\cX$ is a {\em proper} toric DM stack. Then
the orbifold Poincar\'e pairing on $H^*_\orb(\cX)$ is defined as
\begin{equation}\label{eqn:Poincare}
(\alpha,\beta):=\int_{\cI\cX} \alpha\cup \inv^*(\beta),
\end{equation}
We also have an equivariant pairing on $H^*_{\orb,\bT}(\cX)$:
\begin{equation}\label{eqn:T-Poincare}
(\alpha,\beta)_{\bT} := \int_{\cI\cX_{\bT}} \alpha\cup \inv^*(\beta),
\end{equation}
where
$$
\int_{ \cI\cX_{\bT}}: H_{\orb,\bT}^*(\cX) \to H_{\bT}^*(\pt) = H^*(B\bT)
$$
is the equivariant pushforward to a point. When $\cX$ is not proper,
\eqref{eqn:Poincare} is not defined, but we can still define via \eqref{eqn:T-Poincare}
an equivariant pairing $H^*_{\orb,\bT}(\cX)\otimes H^*_{\orb,\bT}(\cX)
\to \cQ_\bT$, where $\cQ_\bT$ is the fractional field of
the ring $H^*(B\bT)$.


\begin{example}\label{ex:Horb}

\begin{enumerate}
\item $\cX=\cX_{1,1,1}$.
\begin{gather*}
N=\bZ^3,\quad \BoxS=\{(0,0,0),(0,0,1),(0,0,2)\};\\
H^0_\orb(\cX)=\bQ \one_{(0,0,0)},\
H^2_\orb(\cX)=\bQ \one_{(0,0,1)},\
H^4_\orb(\cX)=\bQ \one_{(0,0,2)}.
\end{gather*} 

\item $\cX=\cX_{1,2,0}$.
\begin{gather*}
N=\bZ^3,\quad \BoxS=\{ (0,0,0), (0,2,1), (0,1,1)\};\\
H^0_\orb(\cX)=\bQ \one_{(0,0,0)},\quad
H^2_\orb(\cX)=\bQ \one_{(0,2,1)} \oplus \bQ\mathbf 1_{(0,1,1)}.
\end{gather*} 

\item $\cX=\cX_{0,0,0}$.
\begin{gather*}
N=\bZ^3\oplus \bZ_3,\quad \BoxS=N_\tor=\bZ_3=\{0,1,2\};\\
H^0_\orb(\cX)=\bQ \one_0\oplus \bQ \one_1 \oplus \bQ \one_2.
\end{gather*} 

\end{enumerate}
\end{example}

\section{All Genus Open-Closed Gromov Witten invariants}\label{sec:OCGW}

In this section, $\cX$ is a 3-dimensional Calabi-Yau smooth Deligne-Mumford stack. 

\subsection{Rigidification} \label{sec:rigid}
The rigidification $\cX^\rig$ of $\cX$ is a toric Calabi-Yau 3-orbifold.
The Calabi-Yau condition implies $\cX^{\rig}=\cX^{\can}$, where $\cX^{\can}$ is determined by the simplicial fan $\Si$ and then by choosing each $b_i$ to be the primitive generator of each $1$-cone (cf. Equation \eqref{eqn:rigid} in Section \ref{sec:definition}).
 Let $\cT'$ (resp. $\bT'$) be the subtorus of 
$\cT\cong (\bC^*)^3 \times \cB K$ (resp. $\bT\cong (\bC^*)^3$) preserving the Calabi-Yau 3-form on 
$\cX$ (resp. $\cX^\rig$). Then $\bT'\cong (\bC^*)^2$ and $\cT'\cong (\bC^*)^2 \times \cB K$. 
There is a primitive $\wu_3 \in M=\Hom(\bT, \bC^*)\cong \bZ^3$ such that $\Ker(\wu_3)=\bT'$. 
Define $M':=M/\langle \wu_3 \rangle\cong \bZ^2$. Then $\bar{N}':=\wu_3^\perp =\{ \wv \in \bar{N}: 
\langle \wu_3, \wv \rangle =0\}$ is the dual lattice of $M'=\Hom(\bT',\bC^*)$.

The simplicial fan $\Si$ is determined by a convex polytope $\Delta\subset N'_\bR=\bar{N}'\otimes_\bZ \bR$ 
together with a triangulation of $\Delta$ where all the vertices are in the lattice $\bar{N}$. 
The fan $\Si$ is a cone over this triangulation in $N'_{1,\bR}\subset N_\bR$, where
$N'_{1,\bR}=\{ \wv\in N_\bR : \langle \wu_3, \wv\rangle =1\}$. 

\subsection{Toric graph}
\label{sec:toric-graph}

Let $\bT_\bR\cong U(1)^3$ (resp. $\bT'_\bR\cong U(1)^2$) be the maximal compact subgroup
of $\bT\cong (\bC^*)^3$ (resp. $\bT'\cong (\bC^*)^2$), and we choose an $\bfr$ in the extended K\"ahler cone.
The $\bT$-action on $\cX^\rig$ restricts to a Hamiltonian $\bT_\bR$-action on the K\"{a}hler orbifold  
$(\cX^\rig,\omega(\bfr))$. Since $M_\bR$ (resp. $M_\bR'$) is canonically identified with the dual of the Lie algebra of $\bT_\bR$ (resp. $\bT'_\bR$), the K\"{a}hler form $\omega(\bfr)$ determines
a moment map $\mu_{\bT_\bR}: \cX^\rig \longrightarrow M_\bR$ up to translation by a vector in $M_\bR$.
The image $\mu_{\bT_\bR}(\cX^\rig)$ is a convex polyhedron.
The moment map $\mu_{\bT'_\bR}:\cX^\rig \longrightarrow M'_\bR$ is the composition $\pi\circ \mu_{T_\bR}$,
where $\pi:M_\bR\cong \bR^3 \to M'_{\bR}\cong \bR^2$ is the projection.
The map $\mu_{\bT'_\bR}$ is surjective. Let $\cX^\rig_1 \subset \cX^\rig$ be the union of
0-dimensional and 1-dimensional $\bT$-orbits in $\cX^\rig$. The toric graph is defined by
$\Gamma:= \mu_{T_\bR'}(\cX^\rig_1)\subset M'_\bR \cong \bR^2$. It is determined by the K\"{a}hler class 
$[\omega(\bfr)]\in H^2(\cX^\rig;\bR)=H^2(X_\Si;\bR)$ up to translation
by a vector in $M'_\bR$. The vertices (resp. edges) of $\Gamma$ are in one-to-one correspondence
to $3$-dimensional (resp. $2$-dimensional) cones in $\Si$. Conversely, the K\"{a}hler class $[\omega(\bfr)]\in H^2(\cX^\rig;\bR)$ is determined by the toric graph. 

Pulling back under  the map  $\cX\lra \cX^\rig$ defines a one-to-one correspondence between K\"{a}hler forms/classes on $\cX$ and on its rigidification $\cX^\rig$.

\subsection{Aganagic-Vafa A-branes}
\label{sec:AV-brane}
In \cite{AV00}, Aganagic-Vafa introduced a class of Lagrangian submanifolds
of semi-projective smooth toric Calabi-Yau 3-folds. In this section, we generalize this construction 
and define Aganagic-Vafa A-branes in a general 3-dimensional Calabi-Yau smooth toric
DM stack with semi-projective coarse moduli space. 

Let $\cX=[\tmu^{-1}(\bfr)/G_\bR]$ be a 3-dimensional Calabi-Yau smooth toric DM stack,
where 
$$
\bfr =\sum_{a=1}^k r_a e_a^\vee \in \tC(\cX)\subset \bL^\vee_\bR,
$$
and  $\tmu^{-1}(\mathbf{r})\subset \bC^{k+3}$ is defined by the following equations:
$$
\sum_{i=1}^{k+3} l_i^{(a)} |X_i|^2 =r_a,\quad a=1,\ldots, k.
$$
Write $X_i=\rho_i e^{\sqrt{-1}\phi_i}$, where $\rho_i=|X_i|$. An Aganagic-Vafa brane is a Lagrangian
sub-orbifold of $\cX$ of the form
$$
\cL=[\tL/G_\bR]
$$
where
$$
\tL=\{ (X_1,\ldots, X_{k+3})\in \tmu^{-1}(\mathbf{r}): \sum_{i=1}^{k+3}\hat{l}^1_i |X_i|^2=c_1,
\sum_{i=1}^{k+3} \hat{l}_i^2 |X_i|^2 = c_2, \sum_{i=1}^{k+3}\phi_i =\textup{const} \}
$$
for some $\hat{l}^\alpha_i \in \bZ, \sum_{i=1}^{k+3}\hat{l}^\alpha_i =0$, $\alpha=1,2$.
Note that the action of $G_\bR$ on $\bC^{k+3}$ preserves the subsets $\tmu^{-1}(\bfr)$ and $\tL$. If we 
view $\cX = [\tmu^{-1}(\bfr)/G_\bR]$ as a Lie groupoid (and in particular a category)  then $\cL=[\tL/G_\bR]$ is a full subcategory.

An Aganagic-Vafa brane $\cL$ intersects a unique 1-dimensional orbit 
$\fo_\tau \cong \bC^*\times \cB G_\tau$ along $\cS_\tau := \cL\cap \fo_\tau\cong S^1\times \cB G_\tau$.
The inclusion $\cS_\tau \subset \cL$ is a homotopic equivalence, so the fundamental group
of $\cL$ is
$$
\pi_1(\cL) \cong \pi_1(S^1\times \cB G_\tau) \cong \bZ \times G_\tau. 
$$
In particular, it is abelian, so it is isomorphic to its abelianization $H_1(\cL;\bZ)$.

If $(\tau,\si)\in F(\Sigma)$ then there is an inclusion $\iota^{(\tau,\si)}: \cS_\tau \hookrightarrow \cX_\si =[\bC^3/G_\si]$ which 
induces
$$
\iota^{(\tau,\si)}_*: \pi_1(\cS_\tau) \cong \bZ\times G_\tau \to \pi_1(\cX_\si) \cong G_\si. 
$$

\subsection{Moduli spaces of stable maps to $(\cX,\cL)$}
In \cite{KL01}, Katz-Liu introduced stable maps to a symplectic manifold with Lagrangian boundary conditions at all genera; the domain
of such a map is a prestable bordered Riemann surface, i.e. a smooth or nodal bordered Riemann surface. (See also \cite{Liu02}, \cite{FOOO}.) In \cite[Section 2]{CP}, Cho-Poddar
define stable maps to a symplectic {\em orbifold} $\cX$ with Lagrangian boundary conditions, under the assumption that the Lagrangian sub-orbifold $\cL$ does not contain any stacky points (so that
$\cL$ is indeed a smooth manifold); the domain of such a map is a prestable bordered {\em orbifold} Riemann surface in the sense of \cite[Section 2]{CP}, i.e. a smooth  or nodal bordered orbifold Riemann surface, where a stacky point is either an interior marked point or an interior node.

In general, $\cL$ is a sub-orbifold which contains stacky points. To obtain compactness of the moduli spaces when $\cX$ and $\cL$ are compact, one needs to allow orbifold structures
at boundary marked points and boundary nodes. In the present paper $\cL$ may contain stacky points, but we do not need to allow orbifold structures on the boundary of the domain, for the following two reasons.
\begin{enumerate}
\item[(i)] Our enumerative problem only requires interior insertions, so we do not need to introduce any boundary marked points.
\item[(ii)] In our case $\cX$ and $\cL$ are non-compact and we will define and compute open GW invariants by torus localization on moduli space of stable maps $\cX$ with boundaries in $\cL$. If a stable map represents a torus fixed point in the moduli space then any node in the domain must be mapped to a torus fixed (scheme or stacky) point in $\cX$, but $\cL$ does not contain any torus fixed point, so the domain does not contain any boundary nodes.
\end{enumerate}

Let $(\Si,x_1,\ldots, x_n)$ be a prestable bordered orbifold Riemann surface
with $n$ interior marked point. Then the coarse moduli space
$(\bar{\Si},\bar{x}_1,\ldots, \bar{x}_n)$ is a prestable bordered Riemann surface
with $n$ interior marked points, defined in \cite[Section 3.6]{KL01} and \cite[Section 3.2]{Liu02}.
We define the topological type $(g,h)$ of $\Si$
to be the topological type of $\bar{\Si}$ (see \cite[Section 3.2]{Liu02}).
 
Let $(\Si,\bSi)$ be a prestable bordered orbifold Riemann surface of type $(g,h)$,
and let $\bSi = R_1\cup \cdots \cup R_h$ be union of connected components. Each connected
component is a circle which contains no orbifold points. 
A (bordered) {\em prestable map} to the pair $(\cX,\cL)$ is a map $u:(\Si,\partial \Si)\to (\cX,\cL)$
where $\Si$ is a prestable bordered orbifold Riemann surface, 
such that $u\circ \nu:\hat{\Si}\to \cX$ is holomorphic, where $\nu: \hat{\Si}\to \Si$ is the normalization
(so $\hat{\Si}$ is a possibly disconnected smooth bordered orbifold Riemann surface); a prestable map to $(\cX,\cL)$ is {\em stable} if its automorphism group is finite.  The topological type of a stable map $u$ is given by 
the degree $\beta'= \bar{u}_*[\bar{\Si}]\in H_2(X,L;\bZ)$ (where $X$ and $L$ are the coarse moduli spaces of $\cX$ and $\cL$ respectively, and
$\bar{u}:\bar{\Si}\to X$ is the map between coarse moduli spaces)
and $\bar \mu_i=u_*[R_i] = (\mu_i, \lambda_i)\in H_1(\cL;\bZ)\cong \bZ\times G_\tau $ (where $\mu_i\in \bZ$ is the winding number and $\lambda_i\in G_\tau$ is the monodromy).  
Given $\beta'\in H_2(X,L;\bZ)$ and
$$\vmu= ( (\mu_1,\lambda_1),\dots, (\mu_h,\lambda_h)) \in H_1(\cL;\bZ)^h.$$
Let $\Mbar_{(g,h),n}(\cX,\cL\mid \beta',\vmu)$ be the moduli space of stable
maps of type $(g,h)$, degree $\beta'$, winding numbers $\mu_i\in\bZ$ and monodromies $\lambda_i\in G_\tau$,
with $n$ interior marked points.

\subsection{The tangent-obstruction complex and the virtual dimension}
Similar to \cite[Section 4.2]{KL01}, the tangent space $\cT_\xi^1$ and the obstruction space  $\cT_\xi^2$ at a moduli point
$$
\xi=[u:( (\Si, x_1,\ldots,x_n), \bSi)\to (\cX,\cL)]
\in \Mbar_{(g,h),n}(\cX,\cL\mid \beta',\vmu)
$$
fit into the following exact sequence of real vector spaces:
\begin{equation}\label{eqn:tangent-obstruction}
\begin{aligned}
0 \to & \Aut((\Si,x_1,\ldots,x_n),\bSi)\to H^0(\Si,\bSi,u^*T_\cX, (u|_{\bSi})^*T_\cL)\to \cT^1_\xi \\
  \to &  \Def((\Si,x_1,\ldots, x_n),\bSi)\to H^1(\Si,\bSi, u^*T_\cX, (u|_{\bSi})^* T_\cL)\to \cT^2_\xi,
\end{aligned}
\end{equation}
where 
\begin{itemize}
\item $\Aut((\Si,x_1,\ldots, x_n),\bSi)$ is the space of infinitesimal automorphism of the domain $((\Si,x_1,\ldots, x_n),\bSi)$
and is equal to $H^0(\Si,\bSi, T_{\Si}(-\sum_{j=1}^nx_j), T_{\bSi})$ when $\Si$ is a smooth bordered orbifold Riemann surface;
\item $\Def((\Si,x_1,\ldots, x_n),\bSi)$ is the space of infinitesimal deformations of the domain, and is equal to 
$H^1(\Si,\bSi,T_{\Si}(-\sum_{j=1}^n x_i), T_{\bSi})$ when $\Si$ is a smooth bordered orbifold Riemann surface;
\item  $H^0(\Si,\bSi,u^*T_{\cX}, (u|_{\bSi})^*T_{\cL})$ is the space of infinitesimal deformation of the map 
for a fixed domain; 
\item $H^1(\Si,\bSi,u^*T_{\cX},(u|_{\bSi})^*T_{\cL})$ is the space of obstructions
to deforming the map for a fixed domain. 
\end{itemize}
Globally on the moduli space $\Mbar_{(g,h),n}(\cX,\cL,\beta',\vmu)$, there is an exact sequence of sheaves
\begin{equation}\label{eqn:B}
0 \to B_1 \to B_2 \to \cT^1 \to B_4 \to B_5 \to \cT^2 \to 0
\end{equation}
whose fiber at the moduli point $\xi$ is \eqref{eqn:tangent-obstruction}.

Let $\fM_{(g,h),n}$ be the moduli of prestable bordered orbifold Riemann surfaces of type $(g,h)$ with $n$ interior marked point. Then $\fM_{(g,h),n}$ is a differentiable stack (with corners) of real dimension 
\begin{equation}\label{eqn:domain}
3(2g-2+h) + 2n = \dim_\bR  \Def((\Si,x_1,\ldots, x_n),\bSi) - \dim_\bR \Aut((\Si,x_1,\ldots, x_n),\bSi). 
\end{equation}

There are evaluation maps (at interior marked points)
$$
\ev_j:\Mbar_{(g,h),n}(\cX,\cL\mid \beta',\vmu) \to \cI\cX,\quad j=1,\ldots,n.
$$
Given $\vec{v}=(v_1,\ldots, v_n)$, where $v_1,\ldots, v_n\in \BoxS$, define
$$
\Mbar_{(g,h),\vec{v}}(\cX,\cL\mid\beta',\vmu) := \bigcap_{j=1}^n \ev_j^{-1}(\cX_{v_j}).
$$
Suppose that $\xi \in \Mbar_{(g,h),\vec{v}}(\cX,\cL\mid\beta',\vmu)$. By the Riemann-Roch theorem for prestable bordered orbifold Riemann
 surface (which can be derived by combining the proof of the Riemann-Roch theorem for prestable bordered Riemann surfaces and  
 prestable  orbifold closed Riemann surfaces), 
\begin{equation}\label{eqn:map}
\dim_\bR H^0(\Si,\bSi,u^*T_{\cX}, (u|_{\bSi})^*T_{\cL}) -\dim_\bR H^1(\Si,\bSi, u^*T_{\cX}, (u|_{\bSi})^*T_{\cL}) =
3(2-2g-h) - 2\sum_{j=1}^n \age(v_j).
\end{equation}
The above Equation \eqref{eqn:map} is the relative virtual dimension of 
$\Mbar_{(g,h),\vec{v}}(\cX,\cL\mid\beta',\vmu)\rightarrow \fM_{(g,h),n}$ which sends a stable map to its domain.
 The virtual (real) dimension of $\Mbar_{{ (g,h)},\vec{v}}(\cX,\cL\mid \beta',\vmu)$ is equal to 
$$
\dim_\bR \cT^1_\xi - \dim_\bR \cT^2_\xi = 2 \sum_{j=1}^n(1-\age(v_j)),
$$
where  $\age(v_j)\in \{0,1,2\}$.

\subsection{Torus action and equivariant invariants} 
Let $\bT'_\bR\cong U(1)^2$ be the maximal compact subgroup of $\bT'\cong (\bC^*)^2$. For any $t \in \bT'_\bR$, the map
$\phi_t: \cX \to \cX$ given by $x\mapsto t\cdot x$ is an automorphism of the smooth toric DM stack $\cX$, and $\phi_t(\cL) =\cL$,
so $\bT'_\bR$ acts on the moduli spaces $\Mbar_{(g,h),n}(\cX,\cL \mid\beta',\vmu)$; here we use the notion of group actions on stacks 
in \cite{Rom}. Let $F \subset \Mbar_{(g,h),n}(\cX,\cL\mid \beta',\vmu)$ be
the substack of $\bT'_\bR$ fixed points. The restriction of the exact sequence \eqref{eqn:B} to the substack $F$ is the direct sum of  two exact sequences
\begin{equation}\label{eqn:f}
0 \to B_1^f \to B_2^f\to \cT^{1,f} \to B_4^f\to B_5^f \to \cT^{2,f}\to 0,
\end{equation}
\begin{equation}\label{eqn:m}
0 \to B_1^m \to B_2^m \to \cT^{1,m} \to B_4^m\to B_5^m \to \cT^{2,m}\to 0,
\end{equation}
where \eqref{eqn:f} is the subcomplex fixed by the torus action.
The virtual tangent bundle $\cT^\vir_F$ of $F$ is 
$$
\cT^\vir_F =\cT^{1,f}-\cT^{2,f}
$$
whose ranks can be different on different connected components of $F$. We will see that each connected component of $F$ is a compact orbifold, and that
$\cT^\vir_F$ is equal to the tangent bundle $\cT_F$  of $F$. So
$$
[F]^\vir = [F].
$$
The virtual normal bundle $N^\vir$ of $F$ in $\Mbar_{(g,h),n}(\cX,\cL\mid \beta', \vmu)$ is
$$
N^\vir =\cT^{1,m}-\cT^{2,m}.
$$
Given $\gamma_1,\ldots, \gamma_n \in H^*_{\bT',\orb}(\cX;\bQ)=H^*_{\bT'_\bR,\orb}(\cX,\bQ)$, we define
\begin{equation}\label{eqn:local}
\langle \gamma_1,\ldots, \gamma_n\rangle^{\cX,\cL,\bT_\bR'}_{g,\beta',\vmu}:=
\int_{[F]^{\vir}} \frac{\prod_{j=1}^n (\ev_j^*\gamma_i)|_F}{e_{\bT_\bR'}(N^\vir)} \in \cQ_{ \bT'_\bR}
\end{equation}
where $\cQ_{\bT_\bR'}$ is the fractional field of $H^*_{\bT_\bR'}(\pt;\bQ)$, and
$$
\frac{1}{e_{\bT_\bR'}(N^\vir_F)}  =\frac{e_{\bT_\bR'}(\cT^{2,m})}{e_{\bT_\bR'}(\cT^{1,m}) }=\frac{ e_{\bT_\bR'}(B_1^m) e_{\bT_\bR'}(B_5^m)}{e_{\bT_\bR'}(B_2^m)e_{\bT_\bR'}(B_4^m)}.
$$
More precisely, the definition  \eqref{eqn:local} also requires an orientation on  the virtual tangent bundle $\cT^1 -\cT^2$, which we will specify later.

\subsection{Tangent weights: the 3-torus, the Calabi-Yau 2-torus, and the framing 1-torus}\label{sec:weights}
Let $\fo_\tau \cong \bC^*\times \cB G_\tau$ be the unique 1-dimensional $\bT$-orbit 
which intersects the Aganagic-Vafa A-brane $\cL$, where $\tau\in \Si(2)$, as before.
Let $\fl_\tau$ be the closure of $\fo_\tau$, and let $\ell_\tau$ be the coarse moduli of $\fl_\tau$. Then $\ell_\tau$ is either $\bP^1$ or $\bC$.
\begin{definition}
We say $\cL$ is an {\em inner} brane if $\ell_\tau\cong \bP^1$; we say $\cL$ is an {\em outer} brane if $\ell_\tau \cong \bC$.
\end{definition}
If $\cL$ is an outer brane, let $\si\in \Si(3)$ be the unique 3-cone such that $(\tau,\si)\in F(\Si)$; if $\cL$ is an inner brane, we choose
$\si\in \Si(3)$ such that $(\tau,\si)\in F(\Si)$ and let $\si_- \in \Si(3)$ denote the other choice, so that
$\fp_\si$ and $\fp_{\si'}$ are the two torus fixed points in $\fl_\tau$. For inner branes, we also denote $\si_+=\si$.

By permuting $b_1,\ldots, b_r$ if necessary, we may assume that $I'_{\si}=\{1,2,3\}$, and  $(\tau_1,\sigma)= (\tau,\sigma)$, 
$(\tau_2,\sigma)$ and $(\tau_3,\sigma)$ are  three flags in the toric graph in the counter-clockwise direction such that
$$
I'_{\tau_1}= \{2, 3\},\quad I'_{\tau_2}=\{3, 1\},\quad I'_{\tau_3}=\{1,2\}.
$$
Here we fixed an orientation of $\bR^2$. If $\cL$ is an inner brane, we assume in addition $I'_{\si_-}=\{ 2,3, 4\}$. 

Recall from Section \ref{sec:T-subvariety} that for any flag $(\tau,\si)\in F(\Si)$, $\chi_{(\tau,\si)}\in \Hom(G_\si,\bC^*)$ is the character of the 1-dimensional  $G_\si$ representation $T_{\fp_\si}\fl_\tau$. Let 
$$
\fr:= r(\tau,\si) = |G_\si/G_\tau|,\quad \fm:= |G_\tau/K|.
$$
Then we have the following two short exact sequences of finite abelian groups
$$
1\to G_\tau \lra G_\si \stackrel{\chi_{(\tau,\si)} }{\lra} \bmu_{\fr}\to 1,\quad
1\to K \lra G_\tau \stackrel{\chi_{(\tau_3,\si)}}{\lra} \bmu_{\fm}\to 1.
$$
Note that for any $\lambda\in G_\tau$, $\chi_{(\tau,\si)}(\lambda)=1$, and $\chi_{(\tau_2,\si)}(\lambda)\chi_{(\tau_3,\si)}(\lambda)=1$. Let $\bar{\lambda}$ denote the unique element in 
$\{0,1,\ldots,\fm-1\}$ such that 
$$
\chi_3(\lambda)=e^{2\pi\sqrt{-1}\bar{\lambda}/\fm}.
$$
Let $\wu_3\in M$ be defined as in Section \ref{sec:rigid}, so that $\langle \wu_3, \bar{b}_i\rangle =1$.
We may choose a $\bZ$-basis $\{ \wv_1,\wv_2, \wv_3\}$ of $\bar{N}$  such that $ \langle \wu_3,\wv_i\rangle =\delta_{i,3}$, and 
$$
\bar{b}_1 =\fr \wv_1 - \fs \wv_2 + \wv_3,\quad \bar{b}_2 =\fm \wv_2 +\wv_3,\quad \bar{b}_3=\wv_3.
$$
Moreover, the choice  $(\wv_1,\wv_2,\wv_3)$ is unique if we require $\fs \in \{0,1,\ldots,\fr-1\}$. Let $\{ \wu_1,\wu_2,\wu_3\}$
be the $\bZ$-basis of $M$ which is dual to the $\bZ$-basis $\{\wv_1,\wv_2,\wv_2\}$ of $\bar{N}$. 
Let $\{\ww_1,\ww_2,\ww_3\}$ be the $\bQ$-basis of $M_\bQ$ which is dual
to the $\bQ$-basis $\{ \bar{b}_1,\bar{b}_2, \bar{b}_3\}$ of $N_\bQ = N\otimes_{\bZ}\bQ$. Then 
 \[
\ww_1=\frac{1}{\fr}\wu_1,\ \ww_2=\frac{\fs}{\fr \fm}\wu_1+\frac{1}{\fm}\wu_2,\
\ww_3=-\frac{\fs+\fm}{\fr \fm}\wu_1-\frac{1}{\fm}\wu_2+\wu_3.
\]
Moreover, for $i\in \{1,2,3\}$,
$$
\ww_i= e_{\cT} (T_{\fp_\si}\fl_{\tau_i}) = e_{\cT}(\mathcal O_{\cX}(\mathcal D_i))\Big|_{\fp_\si}.
$$
The inclusion $\bT'\subset \bT$ induces the following surjective ring homomorphism
\begin{equation}\label{eqn:CYtorus}
H^*(\cB \bT;R)=R[\wu_1,\wu_2,\wu_3]\lra H^*(\cB \bT';R)=R[\wu'_1,\wu'_2],\quad
\wu_1\mapsto \wu_1',\ \wu_2 \mapsto \wu_2',\  \wu_3 \mapsto 0 
\end{equation}
where $R=\bZ$ or $\bQ$.

Given a {\em framing} which is an integer $f\in \bZ$, let $\bT_f \subset \bT'$ be the kernel of the 
character $\wu_2'-f\wu'_1\in \Hom(\bT';\bC^*)$. Then $\bT_f\cong \bC^*$ is a 1-dimensional subtorus 
of the Calabi-Yau torus $\bT'$. The inclusion $\bT_f\subset \bT'$ induces a surjective ring homomorphism 
\begin{equation}\label{eqn:framing}
H^*(\cB \bT';R)=R[\wu'_1,\wu'_2] \lra H^*(\cB \bT_f;R)=R[\wu],\quad
\wu'_1\mapsto \wu,\ \wu'_2 \mapsto f\wu 
\end{equation}
where $R=\bZ$ or $\bQ$. For $i=1,2,3$, let $\w'_i$ denote the image of $\ww_i$ under the ring homomorphism
\eqref{eqn:CYtorus}, and let
$\ww^f_i$ denote the image of $\ww'_i$ under the ring homomorphism \eqref{eqn:framing}. Then
\begin{equation}\label{eqn:w-plus}
\ww'_1=\frac{1}{\fr}\wu'_1,\ \ww'_2=\frac{\fs}{\fr \fm}\wu'_1+\frac{1}{\fm}\wu'_2,\
\ww'_3=-\frac{\fs+\fm}{\fr \fm}\wu'_1-\frac{1}{\fm}\wu'_2 =-\ww'_1-\ww'_2 \in H^2(\cB \bT')=\bQ \wu'_1 \oplus \bQ\wu'_2,
\end{equation}
and $\ww^f_i = w_i \wu$, where $w_i\in \bQ$ are given by 
$$
w_1=\frac{1}{\fr},\quad   w_2= \frac{\fs + \fr f}{\fr \fm},\quad 
w_3 = -w_1-w_2 = \frac{-\fm-\fs-\fr f}{\fr\fm}.
$$

\subsection{Disk factor as equivariant open GW invariants}

A framed Aganagic-Vafa Lagrangian brane is a pair $(\cL,f)$ where $\cL$ is a Aganagic-Vafa brane together with a choice
of a flag $(\tau,\si)\in F(\Si)$ such that $\fo_\tau$ is the unique 1-dimensional orbit intersecting $\cL$ and a choice
of framing $f\in \bZ$. Given a framed Aganagic-Vafa Lagrangian brane $(\cL,f)$, we choose an isomorphism 
$\pi_1(\cL) \cong \bZ\times G_\tau$ such that if $h=\iota_*^{(\tau,\si)}(d_0,\lambda)$ (where
$\iota_*^{(\tau,\si)}$ is defined in Section \ref{sec:AV-brane}) then
$$
\chi_{(\tau_1,\si)}(h) = e^{2\pi\sqrt{-1}d_0 w_1},\quad
\chi_{(\tau_2,\si)}(h) = e^{2\pi\sqrt{-1}d_0 (w_2-\frac{\bar{\lambda}}{\fm})},\quad
\chi_{(\tau_3,\si)}(h) = e^{2\pi\sqrt{-1}d_0 (w_3 +\frac{\bar{\lambda}}{\fm})}. 
$$
Let $\ell_\tau$ be the coarse moduli of $\fl_\tau$, as before. Let $p_\si\in \ell_\tau$ be the coarse moduli of $\fp_\si\cong \cB G_\si$, 
and let $S_\tau := L\cap \ell_\tau \cong S^1$ be the coarse moduli of $\cS_\tau =\cL\cap \fl_\tau \cong S^1\times \cB G_\tau$.

\subsubsection{$(\cL,f)$ is a framed outer brane} In this case $\ell_\tau =\bC$. Let $D\subset \ell_\tau$ be the disk which contains
$p_\si$ with boundary $S_\tau$, oriented by the complex structure on $\ell_\tau$, and let $b=[D]\in H_2(X,L;\bZ)$. Given
$(d_0,\lambda)\in H_1(\cL;\bZ)\cong \bZ\times G_\tau$, where $d_0>0$, define
$$
\Mbar(d_0,\lambda):= \Mbar_{(0,1),1}(\cX,\cL\mid d_0 b, (d_0,\lambda)).
$$
The virtual real dimension of $\Mbar(d_0,\lambda)$ is  $2(1-\age(h(d_0,\lambda)))$,
where $h(d_0,\lambda):= \iota_*^{(\tau,\si)}(d_0,\lambda)\in G_\sigma$. Define the disk factor
$$
D_{d_0,\lambda}:= \langle \one_{h(d_0,\lambda)}\rangle_{0, d_0 b, (d_0,\lambda)}^{\cX,\cL,\bT'_\bR}
$$
which is a rational function in $\ww_1',\ww_2'$, homogeneous of degree $\age(h(d_0,\lambda))-1$.
The disk factor is computed in \cite{BC11} when $G_\si$ is cyclic,
and in \cite[Section 3.3]{Ro14} for general $G_\si$. In our notation,
the formula in \cite[Section 3.3]{Ro14}
says\footnote{The disk function in \cite[Section 3.3]{Ro14} and our disk factor are the same when $h(d_0,\lambda)\neq 0$. When
$h(d_0,\lambda)=0$, the disk function is $\langle \ \rangle^{\cX,\cL}_{\ldots}$ (no insertion), while the disk factor is
$\langle 1 \rangle^{\cX,\cL}_{\ldots}$ (one insertion of $1$), so
there is an additional factor of $(\frac{\fr}{\w_1})^{\delta_{0,h(d_0,\lambda)}}$ in the disk function in \cite[Section 3.3]{Ro14}.}
\begin{equation}\label{eqn:disk}
\begin{aligned}
D_{d_0,\lambda}=  (\frac{\fr\w'_1}{d_0})^{\age(h(d_0,\lambda))-1} \frac{1}{d_0 |G_\tau|}
\cdot \frac{ \prod_{a=1}^{ \lfloor \frac{d_0}{\fr}\rfloor+ \age(h(d_0,\lambda)) -1}\left(
\frac{d_0 \w'_2}{\fr\w'_1} + a-c_2\left(h(d_0,\lambda)\right) \right) }{\lfloor \frac{d_0}{\fr}\rfloor!}
\end{aligned}
\end{equation}
where $c_i(\cdot)\in \bQ\cap [0,1)$ is defined in Section \ref{sec:CR}. More explicitly,
$$
c_2(h(d_0,\lambda)) = \langle d_0 w_2 -\frac{\bar{\lambda}}{\fm}\rangle.
$$

\subsubsection{$(\cL,f)$ is a framed inner brane} In this case $\ell_\tau \cong \bP^1$. 
It contains two torus fixed points
$p_+ = p_\si$ and $p_-=p_{\si_-}$, where $\si_- \in \Si(3)$. The circle $S_\tau$ is the intersection of two disks $D_+$ and $D_-$ which contain $p_+$ and $p_-$, respectively. Let
$$
b=[D]\in H_2(X,L;\bZ),\quad \alpha =[\ell_\tau]\in H_2(X;\bZ).
$$
Then $[D_-]=\alpha-b\in H_2(X,L;\bZ)$. Given $(d_0,\lambda)\in H_1(\cL;\bZ)\cong \bZ\times G_\tau$, where $d_0\neq 0$, we define
$$
\Mbar(d_0,\lambda) :=
\begin{cases}
\Mbar_{(0,1),1}(\cX,\cL\mid d_0 b, (d_0,\lambda)), & d_0>0,\\
\Mbar_{(0,1),1}(\cX,\cL\mid -d_0(\alpha-b), (d_0, \lambda)), & d_0<0.
\end{cases}
$$
Then
$$
\textup{virtual dimension of }\Mbar(d_0,\lambda)= 
\begin{cases} 1 -\age(h^+(d_0,\lambda)), & d_0 >0,\\ 1-\age(h^-(d_0,\lambda)), & d_0<0, \end{cases}
$$
where $h^{\pm}(d_0,\lambda) = \iota_*^{(\tau,\si_\pm )}(d_0,\lambda)\in  G_{\si_\pm}$. Define
$$
D_{d_0,\lambda}:= \begin{cases}
\langle 1_{h^+(d_0,\lambda)}\rangle_{0,d_0 b, (d_0,\lambda)}^{\cX,\cL,\bT'_\bR}, & d_0 >0,\\
& \\
\langle 1_{h^-(d_0,\lambda)}\rangle_{0,-d_0 (\alpha-b), (d_0,\lambda)}^{\cX, \cL,\bT'_\bR}, & d_0 <0.
\end{cases}.
$$
Then $D_{d_0,\lambda}$ is a rational function in $\wu'_1,\wu'_2$, homogeneous
of degree $\age(h^\pm(d_0,\lambda))-1$ if $\pm d_0>0$.

More precisely, the disk factor $D_{d_0,\lambda}$ is defined up to a sign depending on choice of orientation of
$\Mbar(d_0,\lambda)$, which will be clarified in Section \ref{sec:relative} by relative GW invariants.

\subsection{Normal bundle to $\fl_\tau$} \label{degenerate}
Let $\cL$ be an inner brane, so that $\fl_\tau$ is a proper smooth toric DM curve. Let $\hat{\fl}_\tau$ be the
image of $\fl_\tau$ under the morphism $\cX\to \cX^\rig$. We have
$$
\fl_\tau \lra \hat{\fl}_\tau \lra \fl_\tau^\rig \lra \ell_\tau \cong \bP^1. 
$$
where $\fl_\tau\to \hat{\fl}_\tau$ is a $K$-banded gerbe, $\hat{\fl}_\tau \to \fl_\tau^\rig$ is a $\bmu_m$-banded gerbe, 
and $\fl_\tau \to \fl_\tau^\rig$ is a $G_\tau$-banded gerbe. The normal bundle $\fl_\tau$ in $\cX$ is a direct sum of two $\bT$-equivariant
line bundles over $\fl_\tau$:
$$
N_{\fl_\tau/\cX} =L_2 \oplus L_3,
$$
where $L_2=\cO_{\cX}(\cD_2)\bigr|_{\fl_\tau}$ and $L_3 =\cO_{\cX}(\cD_3)\bigr|_{\fl_\tau}$. The total space of $N_{\fl_\tau/\cX}$ is a smooth  toric DM stack which is
isomophic to the open substack $\cY:= \cX_{\si}\cup \cX_{\si_-}$ of $\cX$. Let
$\hat{\cD}_i$ be the image of $\cD_i$ under $\cX\to \cX^\rig$. Then 
$$
N_{\hat{\fl}_\tau/\cX^\rig} = \hat{L}_2\oplus \hat{L}_3.
$$
where $\hat{L}_2 =\cO_{\cX^\rig}(\hat{\cD}_2)\bigr|_{\hat{\fl}_\tau}$ and $\hat{L}_3 =\cO_{\cX^\rig}(\hat{\cD}_3)\bigr|_{\hat{\fl}_\tau}$.
The total space of $N_{\hat{\fl}_\tau/{\cX^\rig}}$ is a toric orbifold which is isomorphic to the open substack $\cY^\rig=\cX^\rig_\si \cup \cX^\rig_{\si_-}$ of
the toric orbifold $\cX^\rig$.

Let $\Si_0$ be the simplicial fan in $N_\bR$ consisting of $\si$, $\si_-$ and their subcones. The stacky fan of 
$\cY^\rig$ is given by $(\Si_0, (\bar{b}_1, \bar{b}_2, \bar{b}_3, \bar{b}_4))$, where
$$
\bar{b}_1 =\fr \wv_1 -\fs \wv_2 + \wv_3,\quad
\bar{b}_2 = \fm \wv_2 +\wv_3,\quad
\bar{b}_3 = \wv_3,  \quad
\bar{b}_4 =-\fr_- \wv_1 + c\wv_2 +\wv_3,
$$
where $c$ is some integer. For inner branes, we also denote $\fr_+=\fr$. We have $\cY =[U/G_0]$, where
\begin{eqnarray*}
U&=&\{(Z_1,Z_2,Z_3,Z_4)\in \bC^4: (Z_1,Z_4)\neq (0,0)\},\\
G_0&=&\{ (\tit_1,\tit_2,\tit_3,\tit_4)\in (\bC^*)^4: \tit_1^{\fr} (\tit_4)^{-\fr_-}
= (\tit_1)^{-\fs}\tit_2^{\fm} \tit_4^c =\tit_1\tit_2\tit_3\tit_4=1\}. 
\end{eqnarray*}
We have a short exact sequence of abelian groups:
$$
1\to \bmu_\fm  \lra G_0 \stackrel{\chi_1\times \chi_4}{\lra} G_{\fr, \fr_-} \to 1,
$$
where $G_{\fr,\fr_-}=\{ (\tit_1,\tit_4)\in (\bC^*)^2: \tit_1^{\fr} (\tit_4)^{-\fr_-}=1\}$, and
$\chi_i(\tit_1,\tit_2, \tit_3, \tit_4)=\tit_i$. The subgroup $\bmu_{\fm}$ of  $G_0$ acts trivially
on  $V =\{ (Z_1,Z_2,Z_3,Z_4)\in U_{\cA}: Z_2=Z_3=0\}$, so the $G_0$-action on $V$ factors through a $G_{\fr,\fr_-}$-action on $V$, and 
$$
\hat{\fl}_\tau = [V/G_0],\quad \fl_{\tau}^\rig = [V/G_{\fr,\fr_-}] \cong \cF_{\fr,\fr_-}
$$
where $\cF_{\fr,\fr_-}$ denotes the football obtained by
gluing $[\bC/\bmu_{\fr}]$ and $[\bC/\bmu_{\fr-}]$ along $[\bC^*/\bmu_{\fr}] \cong [\bC^*/\bmu_{\fr_-}]\cong \bC^*$. The two torus fixed points in 
$\fl_{\tau}^\rig$ are
$$
\fp_x = [ (\{(0,0,0)\} \times \bC^*)/G_{\fr,\fr_-}] \cong \cB \bmu_\fr,\quad
\fp_y = [ (\bC^* \times \{(0,0,0)\})/G_{\fr,\fr_-}] \cong \cB \bmu_{\fr_-},
$$
and $\fl_\tau^\rig- \{ \fp_x, \fp_y\} \cong \bC^*$. We have a surjective group homomorphism
$\bZ\oplus \bZ \to \Pic(\fl_\tau^\rig)$ sending $(n_x,n_y)$ to  $\cO_{\fl_\tau^\rig}(n_x \fp_x + n_y \fp_y)$; the kernel is 
$\bZ(\fr,-\fr_-)$.  

Let $\cO(-1)$ denote the tautological line bundle over $\cB \bC^*$ associated with the fundamental 
representation $\bC^* \to GL(1,\bC)$, $t\mapsto t$. 
Given a line bundle $L$ over a DM stack $\cZ$ and a positive integer $\fm$, 
let $\sqrt[\fm]{L/\cZ}$ denote the following fiber product (cf. \cite[Definition 2.2.6]{Ca07}): 
$$
\begin{CD}
\sqrt[\fm]{L/\cZ} = \cZ \times_{\cB\bC^*} \cB\bC^* & @>{p_2}>> & \cB\bC^*\\
@V{p_1}VV &  & @VV{\otimes \fm}V\\
\cZ & @>{\phi_L}>> & \cB\bC^*
\end{CD}
$$
where the morphism $\phi_L: \cZ \longrightarrow \cB\bC^*$ is defined by $L$ (so that $\phi_L^*\cO(-1)= L$),
and $\cB\bC^*\to \cB\bC^*$ is induced by the $\fm$-th power map from $\bC^*$ to itself. 
Then $p_1:\sqrt[\fm]{L/\cZ} \to \cZ$ is a $\bmu_\fm$-banded gerbe. Let
$\sqrt[\fm]{L}:=p_2^*\cO(-1)\in \Pic(\sqrt[\fm]{L/\cZ})$. Then $(\sqrt[\fm]{L})^{\otimes \fm}= p_1^*L$, i.e., 
$\sqrt[\fm]{L}$ is an $m$-th root of $p_1^*L$. 

It is straightforward to check that
\begin{itemize}
\item $\hat{\fl}_\tau$ is isomorphic to $\sqrt[\fm]{\cO_{\fl_\tau^\rig}(\fs\fp_x - c\fp_y)/\fl^\rig_\tau}$
as a $\bmu_\fm$-banded gerbe over $\fl_\tau^\rig \cong \cF_{\fr,\fr_-}$, and 
\item $\hat{L}_2 \cong \sqrt[\fm]{\cO_{\fl_\tau^\rig}(\fs\fp_x-c\fp_y)}$, 
$\hat{L}_3 = \hat{L}_2^{-1} \otimes p_1^*\cO_{\fl_\tau^\rig}(-\fp_x-\fp_y)$, where
$p_1: \hat{\fl}_\tau \to \fl_\tau^\rig$ and $\cO_{\fl_\tau^\rig}(-\fp_x-\fp_y)$ is the cotangent bundle of $\fl_\tau^\rig$. 
\end{itemize}
 
\subsection{Degeneration}
Let $\Si_1 =\{ \{0\}, \bR_{\geq 0} \wv_1, \bR_{\geq 0}(-\wv_1)\}$ be the complete 1-dimensional fan in $\bR\wv_1\cong\bR$, and let
$\Si_2=\{ \{0\}, \bR_{\geq 0} \wv_4 ,\bR_{\geq 0}(-\wv_4)\}$ be the complete 1-dimensional fan in $\bR\wv_4\cong \bR$. Then 
$X_{\Si_1}=X_{\Si_2}=\bP^1$. The stacky fan $(\Si_1, (\fr\wv_1,-\fr_-\wv_1))$ defines the 1-dimensional toric orbifold 
$\cF_{\fr,-\fr_-}$, and the stacky fan 
$$
\mathbf{\Si}_{\Box}= (\Si_1\times \Si_2, (b_1'=\fr\wv_1, b_2'=-\fr_-\wv_1, b_3'=\wv_4, b_4'= -\wv_4))
$$ 
defines the 2-dimensional toric orbifold $\cF_{\fr,\fr_-}\times \bP^1$. 
The 1-dimensional cones  in the fan $\Si_1\times \Si_2$ are
$\{ \rho_i = \bR_{\geq 0}b_i': 1\leq i\leq 4\}$. 
Let $\Si'$ be the fan obtained by adding a 1-dimensional cone $\rho_5 = \bR_{\geq 0} b_5'$ where
$b_5'=-\wv_1-\wv_4$.  Let $\cS'$ be the 2-dimensional toric orbifold defined by the stacky fan
$\mathbf{\Si}'=(\Si',(b_1',b_2',b_3',b_4',b_5'))$, and let $\fl_i'=\cV(\rho_i) \subset \cS'$ be 1-dimensional closed toric substack associated with
the ray $\rho_i$. The morphism  $\mathbf{\Si'}\to \mathbf{\Si}_{\Box}$ of stacky fans induces a morphism
$\nu: \cS' \to \cF_{\fr,\fr_-} \times \bP^1$ of toric orbifolds; $\nu$ contracts the divisor $\fl'_5$ to the torus fixed point 
$[0,1]\times [0,1] \cong \cB \bmu_{\fr_-}$ in $\cF_{\fr,\fr_-}\times \bP^1$. 
Let $p: \cF_{\fr,\fr_-}\times \bP^1\to \bP^1$ be the projection to the second factor. The composition
$\pi':= p\circ \nu$ is a flat morphism, and 
$$
(\pi')^{-1}([0,1]) =\fl'_3, \quad (\pi')^{-1}([1,0])=\fl'_4\cup \fl'_5, 
$$
where $\fl'_3\cong \cF_{\fr,\fr_-}$, $\fl'_4 \cong \cF_{\fr,1}$, and $\fl'_5\cong \cF_{1,\fr_-}$. 
The torus fixed points in $\cS'$ are
$$
\fp_x^0 = \fl_1'\cap \fl_3' \cong \cB\bmu_\fr,\ 
\fp_y^0 = \fl_2'\cap \fl_3'\cong \cB\bmu_{\fr_-},\  
\fp_x^\infty = \fl_1'\cap \fl'_4 \cong \cB \bmu_\fr,\ 
\fp_y^\infty = \fl_2'\cap \fl'_5 \cong \cB \bmu_{\fr_-},\ 
p_z =\fl'_4\cap \fl'_5,
$$
where $p_z$ is a scheme point.  

Given any $f\in \bZ$, define
$$
\hat{\cS}= \sqrt[\fm]{ \cO_{\cS'}(\fs\fl_1' -c\fl_2'+f\fl_5')/\cS'}
$$
which is a $\bmu_\fm$-banded gerbe over $\cS'$, and let $\hat{q}: \hat{\cS} \to \cS'=\cS^\rig$ 
be the morphism to the rigidification. 
Define $\pi:= \hat{q}\circ \pi': \cS\to \bP^1$, and let $\hat{\fl}_i\subset \cS$ be the divisor which corresponds to $\fl_i'\subset \cS'$ under 
$\hat{q}:\hat{\cS}\to \cS'$.
Then $\hat{q}_i:= \hat{q}|_{\fl_i}: \hat{\fl}_i\to \fl'_i =\hat{\fl}_i^\rig$ is a $\bmu_\fm$-banded gerbe. We have
$$
\pi^{-1}([0,1]) =\hat{\fl}_3\cong \hat{\fl}_\tau \quad  \pi^{-1}([1,0])=\hat{\fl}_4\cup \hat{\fl}_5. 
$$
Define $\tilde{L}_2, \tilde{L}_3 \in \Pic(\hat{\cS})$ by 
$$
\tilde{L}_2 := \sqrt[\fm]{ \cO_{\cS'}(\fs\fl_1' -c\fl_2'+f\fl_5')}
\quad \tilde{L}_3 := \tilde{L}_2^{-1}\otimes q^* \cO_{\cS'}(-\fl_1'-\fl_2').
$$
Then 
$$
\tilde{L}_2\big|_{\hat{\fl}_3}= \sqrt[\fm]{\cO_{\fl_3'}(\fs\fp^0_x -c\fp^0_y)} \cong \hat{L}_2, \quad 
\tilde{L}_3\big|_{\hat{\fl}_3}=\sqrt[\fm]{\cO_{\fl_3'}(-\fs\fp^0_x + c\fp^0_y)}\otimes \hat{q}_3^*\cO_{\fl_3'}(-\fp_x^0-\fp_y^0) \cong \hat{L}_3.
$$
For $i\in \{2,3\}$, define $\hat{L}_i^+ = \tilde{L}_i\big|_{\hat{\fl}_4}$ and $\hat{L}_i^- =\tilde{L}_i\big|_{\hat{\fl}_5}$. Then
$$
\begin{array}{ll}
\hat{L}_2^+= \sqrt[\fm]{\cO_{\fl_4'}(\fs\fp_x^\infty+fp_z )},
& \hat{L}_3^+ = \sqrt[\fm]{\cO_{\fl'_4}(-\fs\fp^\infty_x-fp_z)}\otimes \hat{q}_4^*\cO_{\fl'_4}(-\fp_x^\infty),\\
\hat{L}_2^-= \sqrt[\fm]{\cO_{\fl'_5}(-c \fp_y^\infty-fp_z)},
& \hat{L}_3^- = \sqrt[\fm]{\cO_{\fl'_5}(c \fp^\infty_y + fp_z)}\otimes \hat{q}_5^*\cO_{\fl'_5}(-\fp_y^\infty),
\end{array}
$$ 
To summarize:
\begin{itemize}
\item $\hat{\cS}$ is a degeneration from $\hat{\fl}_\tau$ to a nodal DM curve $\hat{\fl}_4 \cup \hat{\fl}_5$, and
$\cS'=\hat{\cS}^\rig$ is a degeneration from  the football $\fl_\tau^\rig \cong \cF_{\fr,\fr_-}$ 
to the nodal DM curve $\fl_4'\cup \fl_5'$. 
\item For $i=2,3$, the line bundle $\tilde{L}_i$ on $\hat{\cS}$ defines a degeneration of the line bundle $\hat{L}_i\to \hat{\fl}_\tau$
to a line bundle on $\hat{\fl}_4\cup \hat{\fl}_5$ which 
restricts to $\hat{L}_i^+$ on $\hat{\fl}_4$ and $\hat{L}_i^-$ on $\hat{\fl}_5$.
\item $\hat{\cS}$, $\tilde{L}_i$, $\hat{\fl}_4$, $\hat{\fl}_5$, and $\hat{L}^\pm_i$  depend on $f$, 
while $\cS'$, $\fl_4'$ and $\fl_5'$ do not. 
\end{itemize}
Moreover, the total space of $\tilde{L}_2\oplus \tilde{L}_3 \to \mathcal{\hat{S}} $ is a four dimensional toric orbifold $\hat{\cW}$ defined by 
a stacky fan 
\begin{equation}\label{eqn:Wrig}
(\Si_f, (\bar{b}_1, \bar{b}_2, \bar{b}_3, \bar{b}_4, \wv_4, -\wv_4, -\wv_1 -f\wv_2 -\wv_4))
\end{equation}
where $\Si_f$ is a simplicial fan in $\bigoplus_{i=1}^4 \bR \wv_i$, and $(\bar{b}_1,\ldots,-\wv_1-f\wv_2 -\wv_4)$ is a 7-tuple of vectors 
in $\bar{N}\oplus \bZ\wv_4 \cong \bZ^4$.  Let $\cW$ be the four dimensional smooth toric DM stack
defined by the the stacky fan
\begin{equation}
(\Si_f, (b_1, b_2, b_3, b_4, \wv_4, -\wv_4, -\tilde{\wv}_1 -f\tilde{\wv}_2-\wv_4)),
\end{equation}
where $\tilde{\wv}_1,\tilde{\wv}_2 \in N$ are lifts of $\wv_1,\wv_2 \in \bar{N}$, so that $(b_1,\ldots, -\tilde{\wv}_1- f\tilde{\wv}_2- \wv_4)$ is a 7-tuple of elements
in  $N\oplus \bZ \wv_4$.
Then $\cW$ is a $K$-banded gerbe over $\hat{\cW} =\cW^\rig$, and is a degeneration from 
the total space $\cY$ of $N_{\fl_\tau/\cX}$ to the total space $\cY_\infty$
of a direct sum $L_2^\infty\oplus L_3^\infty$ of line bundles over a nodal DM curve $\fl_+\cup \fl_-$;
$\fl_+\cup \fl_-$  is a $K$-banded gerbe over $\hat{\fl}_4\cup \hat{\fl}_5$ and a $G_\tau$-banded gerbe 
over $\fl_4'\cup \fl_5'$. For $i=2,3$, let $L_i^\pm = L_i^\infty|_{\fl_\pm}$. Then $L_i^\pm$ is the pullback of $\hat{L}_i^\pm$. Let $\fp_0\cong \cB G_\tau$ be the node which 
is the intersection of $\fl_+$ and $\fl_-$ and let $\fp_\pm$ be the unique torus fixed point in $\fl_\pm -\{ \fp_0\}$. Then 
$\fp_+ \cong \cB G_\si$ and $\fp_-\cong \cB G_{\si_-}$.  Define $\bT'$ weights
$$
\ww^\pm_1 := (c_1)_{\bT'}(T_{\fp_\pm}\fl^\pm),\quad
\ww^\pm_2 := (c_1)_{\bT'}(L_2)|_{\fp_\pm},\quad
\ww^\pm_3 := (c_1)_{\bT'}(L_3)|_{\fp_\pm} \in H^2(\cB\bT')=\bQ\wu_1'\oplus \bQ\wu_2'.
$$
Then $\ww^+_i=\ww'_i$ is given by Equation \eqref{eqn:w-plus}, $\ww_1^\pm +\ww_2^\pm +\ww_3^\pm =0$, and 
\begin{equation}\label{eqn:w-minus}
\w_1^-=-\frac{1}{\fr_-}\wu_1',\quad
\w_2^-=\frac{c}{\fr_-\fm}\wu_1' +\frac{1}{\fm}\wu_2',\quad 
\w_3^-=\frac{-c+\fm}{\fr_-\fm}\wu_1'-\frac{1}{\fm}\wu_2'.
\end{equation}
We also have
$$
(c_1)_{\bT'}(T_{\fp_0}\fl_\pm) = \mp \wu_1',\quad
(c_1)_{\bT'}(L_2^\pm)_{\fp_0}=\frac{\wu'_2-f\wu'_1}{m}= -(c_1)_{\bT'}(L_3^\pm)_{\fp_0}.
$$
The above weights are summarized in Figure 7 below.
\begin{figure}[h] \label{fig:degenerate}
\psfrag{w1+}{\small $\ww_1^+$}
\psfrag{w2+}{\small $\ww_2^+$}
\psfrag{w3+}{\small $\ww_3^+$}
\psfrag{-v1/r}{\small $-\wu_1'$}
\psfrag{v1/r}{\small $\wu_1'$}
\psfrag{v2-fv1}{\small $\frac{\wu'_2-f\wu'_1}{m}$}
\psfrag{-(v2-fv1)}{\small $-\frac{\wu'_2-f\wu'_1}{m}$}
\psfrag{p0}{\small $\fp_0$}
\psfrag{w1-}{\small $\ww_1^-$}
\psfrag{w2-}{\small $\ww_2^-$}
\psfrag{w3-}{\small $\ww_3^-$}
\psfrag{p+}{\small $\fp_+$}
\psfrag{p-}{\small $\fp_-$}
\begin{center}
\includegraphics[scale=0.5]{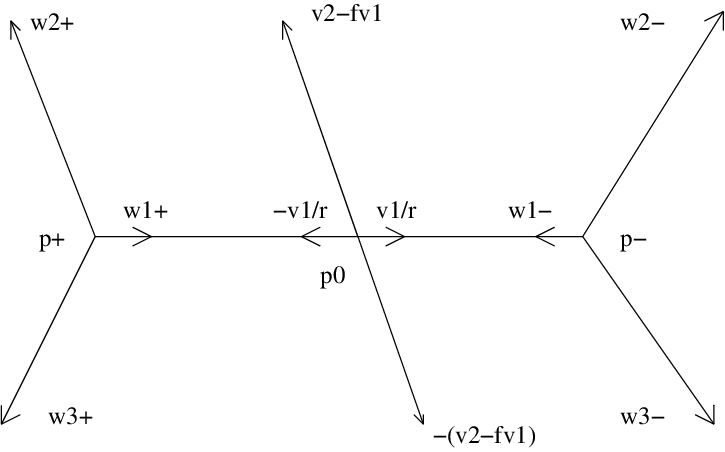}
\end{center}
\caption{Degenerated $N_{\fl_\tau/\cX}$ and the $\bT'$-weights}
\end{figure}

\subsection{Disk factor as equivariant relative GW invariants} \label{sec:relative} When $d_0>0$,
let $\Mbar_{0,1}(\fl_+/\fp_0,(d_0,\lambda))$ be the moduli space of relative maps to $(\fl_+,\fp_0)$ with the relative condition $(d_0,\lambda)$, where $\lambda \in G_\tau$
\cite{AF16}. A relative stable map to $(\fl_+,\fp_0)$ is a morphism to $\fl_+[m]$ which is the union of $\fl_+$ and a chain of
$m$ copies of $\bP^1\times \cB G_\lambda$. Let $\cM_{0,1}(\fl_+/\fp_0,(d_0,\lambda))\subset \Mbar_{0,1}(\fl_+/\fp_0,(d_0,\lambda))$ be the open substack where the target is $\fl_+[0]=\fl_+$. 
The tangent space $\cT^1_\xi$ and the obstruction space $\cT^2_\xi$ at a moduli point
$\xi=[u:(\cC,x,y)\to (\fl_+,\fp_0)]$ in $\cM_{0,1}(\fl_+/\fp_0,(d_0,\lambda))$ (where $u^{-1}(\fp_0)= d_0 y$ as Cartier divisors) fit into the following exact sequence of complex vector spaces:
\begin{equation}\label{eqn:relative-tangent-obstruction}
\begin{aligned}
0 \to & \Ext^0(\Omega_{\cC}(x+y), \cO_{\cC})\to H^0(\cC, u^*(T_{\fl_+}(-\fp_0)) \to \cT^1_\xi \\
  \to & \Ext^1(\Omega_{\cC}(x+y),\cO_{\cC}) \to H^1(\cC, u^*(T_{\fl_+}(-\fp_0)) \to \cT^2_\xi.
\end{aligned}
\end{equation}
Globally on $\cM_{0,1}(\fl_+/\fp_0,(d_0,\lambda))$, there is an exact sequence of sheaves
\begin{equation}\label{eqn:relative-B}
0 \to B_1 \to B_2 \to \cT^1 \to B_4 \to B_5 \to \cT^2 \to 0
\end{equation}
whose fiber at the moduli point $\xi$ is \eqref{eqn:relative-tangent-obstruction}.

Let $\pi:\cU_+\to \cM_{0,1}(\fl_+/\fp_0 ,(d_0,\lambda))$ be the universal domain curve and let $F_+:\cU_+\to \fl_+$ be the evaluation map. We define 
$$
V^+_{0,1}:=R^\bullet\pi_*F_+^*(L_2^+\oplus L_3^+)\in K_{\bT'}\left(\cM_{0,1}(\fl_+/\fp_0,(d_0,\lambda))\right)
$$
where $R^\bullet\pi_*$ is the K-theoretic push-forward.

For $d_0>0$, we define
\begin{align*}
D_{d_0,\lambda}&=\langle 1_{h^+(d_0,\lambda)} \rangle^{\cX,\cL,T_\bR'}_{0,d_0 b,(d_0,\lambda )}
\\
&=\int_{[\cM_{0,1}(\fl_+,\fp_0,(d_0,\lambda))]^\vir}
\ev^*(1_{h^+(d_0,\lambda)})e_{\bT'}(V^+_{0,1}).
\end{align*}

When $d_0<0$, let $\Mbar_{0,1}(\fl_-/\fp_0, (-d_0,\lambda^{-1}))$ be the moduli space of relative stable maps to $(\fl_-,\fp_0)$ with relative condition $(-d_0,\lambda^{-1})$, and let
$\cM_{0,1}(\fl_-/\fp_0,(-d_0,\lambda^{-1}))$ be the open substack where the target is $\fl_-$. Let $\pi:\cU_-\to \cM_{0,1}(\fl_-/\fp_0 ,(-d_0,\lambda^{-1}))$ be the universal domain curve
 and let $F_-:\cU_-\to \fl_-$ be the evaluation map. We define
$$
V^-_{0,1}=R^\bullet\pi_* F_-^*(L_2^-\oplus L_3^-) \in K_{\bT'}\left(\cM_{0,1}(\fl_-/\fp_0,(-d_0,\lambda^{-1}))\right),
$$
and define
\begin{align*}
D_{d_0,\lambda}&=\langle 1_{h^-(d_0,\lambda)} \rangle^{\cX,\cL,T'_\bR}_{0,-d_0 (\alpha-b), (d_0,\lambda))}
\\
&=\int_{[\cM_{0,1}(\fl_-,\fp_0,(-d_0,\lambda^{-1}))]^\vir}
\ev^*(1_{h^-(d_0,\lambda)})e_{\bT'}(V^-_{0,1}).
\end{align*}

Let $u:(\cC,x,y)\to \fl_+ $ be a relative stable map which
represents a point in $\cM$. Suppose that $u$ is fixed by the torus action.  Recall that $c_i:G_\si\to [0,1)\cap \bQ$ is defined by $\chi_i(k)=\exp(2\pi\sqrt{-1}c_i(k))$. In the computation below, let $k^\pm=h^\pm(d_0,\lambda)$. For $j=1, 2,3$, let $\ep_j= c_j(k^+)$. Then $\ep_1=\langle\frac{d_0}{\fr_+}\rangle$. We have the following weights
\begin{align*}
\ch_{\bT'}\bigl( H^0(\cC,u^*L_1^+) \bigr) = \sum_{a=0}^{\lfloor d_0w_1\rfloor} e^{a \frac{\wu_1'}{d_0}},&\quad
\ch_{\bT'}\bigl( H^1(\cC,u^*L_1^+)  \bigr) = 0, \\
\ch_{\bT'}\bigl( H^0(\cC,u^*L_2^+\otimes \cO_y) \bigr) =
\delta_{\langle d_0w_2- \epsilon_2\rangle, 0} e^{\frac{\wu_2'-f\wu_1'}{m}},&\quad
\ch_{\bT'}\bigl( H^1(\cC,u^*L_2^+\otimes \cO_y) \bigr) = 0,
\end{align*}
\begin{align*}
\ch_{\bT'}\bigl( H^0(\cC, u^*L^+_2) \bigr) &=
\begin{cases}
\displaystyle{ \sum_{a= -\lfloor d_0w_2-\ep_2\rfloor}^0 e^{\w'_2+(a-\ep_2) \frac{\wu'_1}{d_0 } } }, & f \geq 0,\\
0, & f<0,\\
\end{cases} \\
\ch_{\bT'}\bigl( H^1(\cC, u^*L^+_2) \bigr) &=
\begin{cases}
0, & f \geq 0,\\
\displaystyle{ \sum_{a=1}^{-\lfloor d_0w_2-\ep_2\rfloor-1} e^{\w'_2+(a-\ep_2)\frac{\wu'_1}{d_0 } } }, & f<0
\end{cases}\\
\ch_{\bT'}\bigl( H^0(\cC, u^*L^+_3) \bigr) &=
\begin{cases}
\displaystyle{ \sum_{a=-\lfloor d_0w_3-\ep_3\rfloor}^0 e^{\w'_3+(a-\ep_3) \frac{\wu'_1}{d_0 } } }, & f<0,\\
0, & f\geq 0,\\
\end{cases}\\
\ch_{\bT'}\bigl( H^1(\cC, u^*L^+_3) \bigr) &=
\begin{cases}
0, & f<0 ,\\
\displaystyle{ \sum_{a=1}^{-\lfloor d_0w_3 - \ep_3\rfloor-1} e^{\w'_3+(a-\ep_3)\frac{\wu'_1}{d_0 } } }, & f\geq 0.
\end{cases}
\end{align*}
and the following identities
\begin{align*}
\sum_{a=-\lfloor w_3d_0-\ep_3 \rfloor}^0 e^{\w'_3+(a-\ep_3) \frac{\wu'_1}{d_0 } }
&= \sum_{a=d_0w_1+\ep_2+\ep_3}^{-\lfloor d_0 w_2-\ep_2\rfloor -1 +
\delta_{\langle d_0 w_2-\ep_2 \rangle , 0}}  e^{-\w'_2 +(\ep_2-a)\frac{\wu'_1}{d_0 }} \\
\sum_{a=1}^{-\lfloor w_3 d_0 +1 - \ep_3 \rfloor} e^{\w'_3+(a-\ep_3)\frac{\wu'_1}{d_0 } }
&= \sum_{a=-\lfloor d_0w_2-\ep_2\rfloor +\delta_{\langle d_0w_2-\epsilon_2  \rangle}}^{\frac{d_0 }{s^+_1} +\ep_2+\ep_3-1}
e^{-\w'_2 +(\ep_2-a)\frac{\wu_1'}{d_0 }}\\
d_0w_1 + \ep_2 +\ep_3 &= \lfloor \frac{d_0 }{s^+_1}\rfloor + \age(k^+).
\end{align*}

The $\bT'$-equivariant Euler classes are
\begin{align*}
e_{\bT'}(B_1^m) &= 1,\\
e_{\bT'}(B_2^m) &= \lfloor d_0 w_1 \rfloor! (\frac{\wu'_1}{d_0})^{\lfloor d_0w_1 \rfloor}, \\
\frac{e_{\bT'}(B_5^m)}{e_{\bT'}(B_4^m)} &=
(-1)^{\lceil d_0 w_2-\ep_2 \rceil +\lfloor d_0w_1 \rfloor +\age(k^+)-1}
\prod_{a =1}^{\lfloor d_0w_1 \rfloor  + \age(k^+) -1}(\w'_2 + (a-\ep_2)\frac{\wu'_1}{d_0}).
\end{align*}
Since $|\Aut(f)| = d_0|G_\tau|$, by localization
\begin{align*}
  &D_{d_0,\lambda}=D(d_0,k^+,k^-) = \frac{1}{|\Aut(f)|} \frac{e_{\bT'}(B_1^m) e_{\bT'}(B_5^m)}{e_{\bT'}(B_2^m) e_{\bT'}(B_4^m)} \\
= &\frac{1}{d_0|G_\tau|}\frac{\prod_{a=1}^{\lfloor d_0 w_1\rfloor + \age(k^+) -1}(\w'_2 + (a-\ep_2)\frac{\wu'_1}{d_0})}
{\lfloor d_0 w_1\rfloor! (\frac{\wu'_1}{d_0})^{\lfloor d_0w_1\rfloor}} \cdot
(-1)^{\lceil d_0w_2-\ep_2\rceil +\lfloor d_0w_1\rfloor +\age(k^+)-1 } \\
=&(-1)^{\lceil d_0w_2-\ep_2\rceil +\lfloor d_0w_1\rfloor +\age(k^+)-1 } (\frac{\wu_1'}{d_0})^{\age(k^+)-1} \cdot
\frac{1}{d_0|G_\tau|}\frac{\prod_{a=1}^{\lfloor d_0w_1\rfloor +\age(k^+)-1}
(\frac{d_0 \w'_2}{s^+_1\w'_1} +a-\ep_2)} {\lfloor d_0w_1\rfloor !} \\
= &- (-1)^{\lfloor d_0w_3+\frac{\bar\lambda}{m} \rfloor } (\frac{\wu_1'}{d_0})^{\age(k^+)-1} \cdot
\frac{1}{d_0|G_\tau|}\frac{\prod_{a=1}^{\lfloor d_0w_1\rfloor +\age(k^+)-1}
(\frac{d_0 \w'_2}{s^+_1\w'_1} +a-\ep_2)} {\lfloor d_0w_1\rfloor !}
\end{align*}

Let $\wu:= \iota_f^* \wu'_1 \in H^2(\bT_f;\bZ)$. Then $H^*(\bT_f;\bQ)=\bQ[\wu]$ and $\iota_f^*\wu_2'=f\wu$.
Define $D_{d_0,\lambda,f}=\iota_{f}^* D_{d_0,\lambda}$. Hence when $d_0>0$
\begin{align*}
D_{d_0,\lambda,f} &=  -(-1)^{\lfloor d_0w_3+\frac{\bar \lambda}{m}\rfloor}(\frac{\wu}{d_0})^{\age(h^+(d_0,\lambda))-1} \cdot
\frac{1}{d_0|G_\tau|}\frac{\prod_{a=1}^{\lfloor d_0w_1\rfloor +\age(h^+(d_0,\lambda))-1}
(d_0 w_2 +a-c_2(h^+(d_0,\lambda))} {\lfloor d_0w_1\rfloor !}
\end{align*}
If $d_0 <0$, similar computation shows (notice $\overline{ \lambda^{-1}}=(1-\delta_{\bar\lambda,0})(m-\bar \lambda)\in \{0,\dots,m-1\}$)
\begin{align*}
D_{d_0 ,\lambda,f}
=&  -(-1)^{\lfloor d_0w_2^- + (1-\frac{\bar\lambda}{m}-\delta_{\bar \lambda,0})\rfloor} (\frac{\wu}{d_0 })^{\age(h^-(d_0,\lambda))-1} \cdot
\frac{1}{-d_0 |G_\tau|}\frac{\prod_{a=1}^{\lfloor d_0 w_1^- \rfloor+\age(h^-(d_0,\lambda))-1}(d_0w_3^--c_3(h^-(d_0,\lambda))+a)}{\lfloor d_0w_1^-\rfloor!}.
\end{align*}
If $\cL$ is an outer brane, it is the same as $d_0>0$.  Define
\begin{align*}
D_{d_0,\lambda,f} &=  -(-1)^{\lfloor d_0w_3+ \frac{\bar \lambda}{m}\rfloor}(\frac{\wu}{d_0})^{\age(h(d_0,\lambda))-1} \cdot
\frac{1}{d_0|G_\tau|}\frac{\prod_{a=1}^{\lfloor d_0w_1\rfloor +\age(h(d_0,\lambda))-1}
(d_0 w_2 +a-c_2(h(d_0,\lambda))} {\lfloor d_0w_1\rfloor !}.
\end{align*}

\subsection{Open-closed GW invariants and descendant GW invariants}
For any torus fixed point $\fp_\si$ of $\cX$, where $\si\in \Si(3)$, we have
$$
H^*_\orb(\fp_\si) =\bigoplus_{k \in G_\si} \bQ \one_k,\quad
H^*_{\orb,\bT'}(\fp_\si) =\bigoplus_{k \in G_\si}\bQ[\w'_1,\w'_2] \one_k.
$$
The inclusion $\iota_\si: \fp_\si \hookrightarrow \cX$ induces
$$
\iota_{\si *} : H^*_{\orb,\bT'}(\fp_\si) = H^*_{\bT'}(\cI\fp_\si) \to H^*_{\orb,\bT'}(\cX) = H^*_{\bT'}(\cI\cX).
$$
Define
\[
\phi_{\si,k} = \iota_{\si *} \one_k\in H^*_{\orb,\bT'}(\cX),\ 
\phi_{\si,k}^f=\iota_f^* \phi_{\si,k}\in H^*_{\orb,\bT_f}(\cX),
\]
\begin{proposition}[framed inner brane]\label{inner-psi}
Suppose that $(\cL,f)$ is a framed inner brane, and
$$
\vmu=((\mu_1,\lambda_j),\ldots, (\mu_h,\lambda_h)),
$$
where
$(\mu_j,\lambda_j)\in H_1(\cL;\bZ)\cong \bZ\times \cB G_\tau$. Let $J_\pm=\{ j\in \{1,\ldots,h\}: \pm \mu_j>0\}$, and 
let $k_j^\pm = h^\pm(\mu_j,\lambda_j)$.
Then
\begin{eqnarray*}
&& \langle \gamma_1,\ldots, \gamma_n\rangle_{g,\beta',\vmu}^{\cX, (\cL,f)}\\
&=&\prod_{j=1}^h D'_{\mu_j,\lambda_j,f} 
\cdot \int_{[\Mbar_{g,n+h}(\cX,\beta)]^\vir} \frac{ \big(\prod_{i=1}^n\ev_i^*\gamma_i
\prod_{j\in J_+}\ev_{n+j}^*\phi^f_{\si_+, (h^+(d_0,\lambda_j))^{-1}} \prod_{j\in J_-} \ev_{n+j}^*\phi^f_{\si_-, (h^-(d_0,\lambda_j))^{-1}} \big)}
{\prod_{j=1}^h\frac{\wu}{\mu_j}(\frac{\wu}{\mu_j} -\bar{\psi}_{n+j})}
\end{eqnarray*}
where
$$
\beta\in H_2(\cX),\quad
\beta'= \beta + \big(\sum_{j\in J_+} \mu_j\big)b -\big(\sum_{j\in J_-} \mu_j\big)(\alpha-b) \in H_2(\cX,\cL).
$$
\end{proposition}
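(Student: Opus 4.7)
The plan is to apply virtual torus localization (in the orbifold version of Graber--Pandharipande, cf. \cite{AGV08}) to the open moduli $\cM=\Mbar_{(g,h),n}(\cX,\cL\mid\beta',\vmu)$ with respect to the subtorus $\bT'_\bR\subset \bT_\bR$, and then reorganize the sum over $\bT'$-fixed loci so that it recombines into a virtual integral over the closed moduli $\hcM=\Mbar_{g,n+h}(\cX,\beta)$. All of the main geometric ingredients are already in place above: the decomposition of a $\bT'$-fixed map $u:(\Si,\bSi)\to(\cX,\cL)$ into a core orbicurve $\hu:(\cC,\bx,\by)\to\cX$ together with $h$ multiple-cover disks $u_j:D_j\to\fl_\tau$; the identification of the corresponding fixed component $F_\Ga$ with $F_{\hGa}$ up to the explicit finite morphism of degree $\prod_j r_j^\pm \mu_j/s_1^\pm$; and the computation of the normal-bundle contributions $e_{\bT'}(B_i^m)$.

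The first step is to write
\[
\langle\gamma_1,\ldots,\gamma_n\rangle^{\cX,(\cL,f^+,f^-)}_{g,\beta',\vmu}
=\sum_{\Ga\in\cG}\int_{[F_\Ga]^\vir}\frac{(\prod_{i=1}^n\ev_i^*\gamma_i)|_{F_\Ga}}{e_{\bT'}(N^\vir_\Ga)},
\]
and, for each $\Ga$, to match $F_\Ga$ with the unique $\hGa\in\hcG$ obtained by contracting the disks $D_j$ to marked points $y_j=x_{n+j}$ twisted by $(k_j^\pm)^{-1}$. Using the long exact sequence relating $B_i^m$ to $\hB_i^m$, the disk Euler factors $e_{\bT'}(H^1(D_j)^m)/e_{\bT'}(H^0(D_j)^m)$ separate out, while the two tangent spaces $(T_{\fp_\pm}\cX)^{k_j^\pm}$ appearing at the node produce the smoothing denominator $\frac{s_1^+\w_1}{\mu_j}(\frac{s_1^+\w_1}{\mu_j}-\bar\psi_{n+j})$ attached to the $(n+j)$-th marked point of $\hcM$. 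Combining with the automorphism/degree ratio $\prod_{j\in J_\pm}s_1^\pm/(r_j^\pm\mu_j)$ and with the closed formula \eqref{eqn:disk} rewritten through the framed disk factor $D(\mu_j,k_j^+,k_j^-;f^+,f^-)$, the whole per-graph contribution collapses into $\prod_j D'(\mu_j,k_j^+,k_j^-;f^+,f^-)$ times the pullback integrand on $F_{\hGa}$.

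The second step is to sum over $\Ga$. The map $\Ga\mapsto\hGa$ is a bijection of the decorated graphs contributing to the two fixed loci once one fixes the disk data $(\mu_j,k_j^\pm)$ (and the residual splittings are already encoded in $\hGa$), so the sum over $\hGa\in\hcG$ reconstructs $\Mbar_{g,n+h}(\cX,\beta)^{\bT'}$ together with its full virtual-localization contribution. Applying the virtual localization formula backwards on $\hcM$ gives
\[
\sum_{\hGa}\int_{[F_{\hGa}]^\vir}\frac{(\cdots)|_{F_{\hGa}}}{e_{\bT'}(N^\vir_{\hGa})}
=\int_{[\Mbar_{g,n+h}(\cX,\beta)]^\vir}(\cdots),
\]
which yields exactly the formula in the statement with $\bar\psi_{n+j}$ at the $j$-th disk node.

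The main obstacle is the orbifold bookkeeping at each stacky node $y_j$: one has to track (i) the automorphism factor coming from the cyclic gerbe $B\bZ_{r_j^\pm}$, (ii) the fact that $\ev_{n+j}^*\phi_{\si_\pm,(k_j^\pm)^{-1}}$ (rather than $\one_{k_j^\pm}$) is the correct insertion because the gluing identifies the node with the \emph{inverse} twisting on the closed side, and (iii) the combinatorial signs and $\lceil\cdot\rceil/\lfloor\cdot\rfloor/\{\cdot\}$ identities that turn the raw disk contribution into the canonical $D'$; those identities are precisely the ones used in the derivation of $D(d_0,k^+,k^-;f^+,f^-)$ above, so once applied termwise they leave no residual sign. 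The remaining ingredients (localization on Deligne--Mumford stacks with Lagrangian boundary, existence of the disk factor as an equivariant relative invariant, and comparison of $H^\bullet(D_j)$ Euler classes) have all been recorded earlier in the section and are invoked without change.
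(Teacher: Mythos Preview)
Your proposal is correct and follows essentially the same approach as the paper: the proposition is a summary of the virtual-localization computation carried out immediately before its statement in Section~\ref{sec:GW-inv}, namely the decomposition of each $\bT'$-fixed map into a closed core $\hat u$ plus $h$ disk covers $u_j$, the comparison $[F_\Ga]^\vir = \prod_j \frac{s_1^\pm}{r_j^\pm |\mu_j|}[F_{\hGa}]^\vir$, the long exact sequence relating $B_i$ to $\hat B_i$ and the disk cohomologies $H^\bullet(D_j)$, and the identification $\frac{e_{\bT'}(H^1(D_j)^m)}{e_{\bT'}(H^0(D_j)^m)} = |\mu_j||G_1|D(\mu_j,k_j^+,k_j^-;f^+,f^-)$. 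Your outline reproduces these steps and the final graph summation faithfully.
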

\begin{proof}
There exists  $\beta\in H_2(\cX)$ such that
$$
\beta'=\beta +(\sum_{j\in J_+} \mu_j)b + \sum_{j\in J_-}(-\mu_j)(\alpha-b).
$$
Let $\langle k_j^+\rangle$ be the cyclic subgroup generated by $k_j^+$,
and let $r_j$ be the cardinality of $\langle k_j^+\rangle$ for $j\in J_+$ and
$r_j$ be the cardinality of $\langle k_j^-\rangle$ for $j\in J_-$.

We have
\begin{eqnarray*}
\Mbar_{(g,h),n}(\cX,\cL\mid \beta',\vmu)^{\bT'_\bR} &=& \bigcup_{\Ga \in   G_{g,n}(\cX,\cL\mid \beta',\vmu)} F_\Gamma \\
\Mbar_{g,n+h}(\cX, \beta)^{\bT'_\bR} = \Mbar_{g,n+h}(\cX, \beta)^{\bT'}  &=& \bigcup_{\hGa\in G_{g,n+h}(\cX,\beta) } F_{\hGa} 
\end{eqnarray*}
In the remaining part of this subsection, we use the following abbreviations:
\begin{eqnarray*}
&& \cM = \Mbar_{g,n}(\cX,\cL\mid \beta',\vmu),\quad \hcM = \Mbar_{g,n+h}(\cX, \beta), \\
&& \cM_j =
\begin{cases}
\Mbar_{(0,1),1}(\cX,\cL\mid \mu_j b, (\mu_j,\lambda)), & j\in J_+\\
\Mbar_{(0,1),1}(\cX, \cL\mid -\mu_j(\alpha-b), (\mu_j, \lambda)), & j\in J_-
\end{cases} \\
&& \cG = G_{g,n}(\cX,\cL\mid \beta',\vmu),\quad \hcG = G_{g,n+h}(\cX,\beta)\\
&& \bx=(x_1,\ldots, x_n),\quad \by=(y_1,\ldots, y_h).
\end{eqnarray*}

Given $u:(\Si,\bx,\bSi)\to (\cX, \cL)$ which represents a point  $\xi\in \cM^{\bT'_\bR}$, we have
$$
\Si = \cC \cup \bigcup_{j=1}^h D_j,
$$
where $\cC$ is an orbicurve of genus $g$, $x_1,\ldots, x_n \in \cC$,
$D_j = [\{ z\in \bC\mid |z|\leq 1\}/\bZ_{r_j}]$,  $\cC$ and $D_j$ intersect
at $y_j = B\bZ_{r_j}$. Let $u_j=u|_{D_j}$ and $\hu= u|_\cC$. Then
\begin{enumerate}
\item For $j=1,\ldots, h$, $u_j: (D_j,\partial D_j)\to (\cX, \cL)$ represents
a point in $\cM_j^{\bT'_\bR}$.
\item $\hu:(\cC,\bx,\by)\to \cX$ represents a point $\hat{\xi}\in \hcM^{\bT'}$, and
$\hu(\by_j) =[\fp_\pm, (k_j^\pm)^{-1}] \in \cI \fp_\pm \subset \cI\cX$ if $j\in J_\pm$.
  \end{enumerate}
Let $x_{n+j}=y_j$. Let $F_\Ga$ be the connected component of $\cM^{\bT'}$ associated
to the decorated graph $\Ga\in \cG$, and let $F_{\hGa}$ be the connected component
of $\hcM^{\bT'_\bR}$ associated to the decorated graph $\hat \Ga\in \cG$. Then
for any $\Ga\in \cG$ there exists $\hGa\in \hcG$ such that
$$
\ev_{n+j}(F_{\hGa}) = (\fp_\pm,(k_j^\pm)^{-1}) \in \cI \fp_\pm \subset \cI \cX
$$
if $j\in J_\pm$,  and $F_{\Ga}$ can be identified with $F_{\hGa}$ up to a finite morphism. More precisely,
\begin{eqnarray*}
[F_\Ga]^\vir &=& \prod_{j\in J_+} \frac{|G_{\si_+}|}{r_j|\Aut(u_j)|}
\prod_{j\in J_-} \frac{|G_{\si_-}|}{r_j|\Aut(u_j)|} [F_{\hGa}]^\vir  \\
&=&\prod_{j\in J_+} \frac{s^+_1}{r_j\mu_j}
\prod_{j\in J_-} \frac{s^-_1}{-r_j\mu_j} [F_{\hGa}]^\vir.
\end{eqnarray*}
We have
$$
\frac{1}{e_{\bT'_\bR}(N_\Ga^\vir)} =\frac{e_{\bT'_\bR}(B_1^m) e_{\bT'_\bR}(B_5^m)}{e_{\bT'_\bR}(B_2^m) e_{\bT'_\bR}(B_4^m)},\quad
\frac{1}{e_{\bT'_\bR}(N_\hGa^\vir)} =\frac{e_{\bT'_\bR}(\hB_1^m) e_{\bT'_\bR}(\hB_5^m) }{e_{\bT'_\bR}(\hB_2^m) e_{\bT'_\bR}(\hB_4^m)},
$$
where
\begin{eqnarray*}
e_{\bT'_\bR}(B_1^m)&=& e_{\bT'_\bR}(\hB_1^m), \\
e_{\bT'_\bR}(B_4^m) &=& e_{\bT'_\bR}(\hB_4^m)
\prod_{j\in J_+}(\frac{\wu_1'}{r_j \mu_j} -\frac{\bar{\psi}_j}{r_j})
\prod_{j\in J_-}(\frac{\wu_1'}{r_j \mu_j}-\frac{\bar{\psi}_j}{r_j})
\end{eqnarray*}
For $k=0,1$ and $j=1,\ldots, h$, let
$$
H^k(D_j) = H^k\bigl(D_j, \partial D_j, u_j^*T\cX, (u_j|_{\partial D_j})^*T\cL\bigr).
$$
Then there is a long exact sequence
\begin{eqnarray*}
&& 0 \to B_2 \to \hB_2 \oplus \bigoplus_{j=1}^h H^0(D_j) \to
\bigoplus_{j\in J_+} (T_{\fp_+}\cX)^{k_j^+}\oplus
\bigoplus_{j\in J_-} (T_{\fp_-}\cX)^{k_j^-} \\
&& \to B_5 \to \hB_5 \oplus \bigoplus_{j=1}^h H^1(D_j)\to 0,
\end{eqnarray*}
where $(T_{\fp_\pm}\cX)^{k_j^\pm}$ denote the $k_j^\pm$-invariant part of
$T_{\fp_\pm}\cX$. Note that
$$
(T_{\fp_\pm}\cX)^{k_j^\pm} = T_{(\fp_\pm, k_j^\pm)} \cI \cX
= T_{(\fp_\pm, k_j^{-1})} \cI \cX.
$$
\begin{eqnarray*}
\frac{e_{\bT'_\bR}(H^1(D_j)^m)}{e_{\bT'_\bR}(H^0(D_j)^m)} = |\mu_j ||G_\tau| D_{\mu_j,\lambda_j}
\end{eqnarray*}

Let
\begin{eqnarray*}
e_{\bT_f}(N^\vir_\Gamma) &:=& \iota_f^* e_{\bT'_\bR}(N^\vir_\Gamma) =  e_{\bT'_\bR}(N^\vir_\Gamma)\Big|_{\wu_1'=\wu, \wu_2'=f\wu},\\
e_{\bT_f}(N^\vir_\hGa) &:= &\iota_f^* e_{\bT'_\bR}(N^\vir_\hGa) =  e_{\bT'_\bR}(N^\vir_\hGa)\Big|_{\wu_1'=\wu, \wu_2'=f\wu}.
\end{eqnarray*}

Given $\gamma_1,\ldots,\gamma_n \in H^*_{\orb,\bT_f}(\cX)$, we define 
\begin{align*}
&\langle \gamma_1,\dots,\gamma_h\rangle^{\cX,(\cL,f)}_{\beta',((\mu_1,\lambda_1),\dots,(\mu_h,\lambda_h))}:=\int_{[F_\Ga]^\vir} \frac{ \big(\prod_{i=1}^n\ev_i^*\gamma_i\big)|_{F_\Gamma}}{e_{\bT_f}(N_\Ga^\vir)} \\
=& \prod_{j\in J_+ }\frac{\fr_+}{r_j\mu_j} \prod_{j\in J_-}\frac{\fr_-}{- r_j\mu_j} \prod_{j=1}^h (|\mu_j||G_\tau|D_{\mu_j,
\lambda_j,f}) \\
&\cdot \int_{[F_{\hGa}]^\vir} \frac{ \big(\prod_{i=1}^n\ev_i^*\gamma_i
\prod_{j\in J_+}\ev_{n+j}^*\phi_{\si_+, (k_j^+)^{-1}} \prod_{j\in J_-} \ev_{n+j}^*\phi^f_{\si_-, (k_j^-)^{-1}}\big)|_{F_\Gamma}}
{\prod_{j\in J_+}(\frac{\wu_1'}{r_j \mu_j} -\frac{\bar{\psi}_{n+j}}{r_j})
\prod_{j\in J_-}(\frac{\wu_1'}{r_j \mu_j} -\frac{\bar{\psi}_{n+j}}{r_j})e_{\bT_f}(N_\hGa^\vir)} \\
=& \prod_{j=1}^hD'_{\mu_j,\lambda_j,f} \cdot \int_{[F_{\hGa}]^\vir} \frac{ \big(\prod_{i=1}^n\ev_i^*\gamma_i
\prod_{j\in J_+}\ev_{n+j}^*\phi^f_{\si_+, (k_j^+)^{-1}} \prod_{j\in J_-}\ev_{n+j}^*\phi^f_{\si_-, (k_j^-)^{-1}}\big)|_{F_\Gamma}}{
\prod_{j=1}^h\frac{\wu}{\mu_j}(\frac{\wu}{\mu_j} -\bar{\psi}_{n+j})e_{\bT_f}(N_\hGa^\vir)}
\end{align*}
where
$$
D'_{d_0,\lambda,f}
=   \begin{cases}
\displaystyle{  -(-1)^{\lfloor d_0w^+_3 + \frac{\bar \lambda}{m}\rfloor}(\frac{\wu}{d_0})^{\age(k^+)} \cdot
\frac{\fr_+}{d_0}\cdot \frac{\prod_{a=1}^{\lfloor d_0w^+_1\rfloor +\age(k^+)-1}
(d_0 w^+_2 +a-c_2(k^+))} {\lfloor d_0w^+_1\rfloor !}}, \quad \quad d_0>0, & \\
& \\
\displaystyle{    -(-1)^{\lfloor d_0w_2^- + (1-\frac{\bar\lambda}{m}-\delta_{\bar \lambda,0})\rfloor} (\frac{\wu}{d_0 })^{\age(k^-)} \cdot
\frac{\fr_-}{-d_0}\cdot \frac{\prod_{a=1}^{\lfloor d_0 w_1^- \rfloor+\age(k^-)-1}(d_0w_3^--c_3(k^-)+a)}{\lfloor d_0w_1^-\rfloor!}  } , \quad\quad d_0<0. & \\
\end{cases}
$$
\end{proof}

Suppose that $(\cL,f)$ is a framed outer brane. Define
$$
D'_{d_0,\lambda,f}=   -(-1)^{\lfloor d_0w_3+ \frac{\bar \lambda}{m}\rfloor}(\frac{\wu}{d_0})^{\age(k)} \cdot
\frac{\fr}{d_0}\cdot \frac{\prod_{a=1}^{\lfloor d_0w_1\rfloor +\age(k)-1}
(d_0 w_2 +a-c_2(k))} {\lfloor d_0w_1\rfloor !}.
$$
where $k=h(d_0,\lambda)$. By the $d_0>0$ part of the proof of Proposition \ref{inner-psi}, we obtain: 
\begin{proposition}[framed outer brane]\label{outer-psi}
Suppose that $(\cL,f)$ is a framed inner brane, and $\vmu=((\mu_1,\lambda_1),\ldots, (\mu_h,\lambda_h))$,
where $(\mu_j,\lambda_j)\in H_1(\cL;\bZ)$. Then
$$ \langle \gamma_1,\ldots, \gamma_n\rangle_{g,\beta',\vmu}^{\cX, (\cL,f)}
=\prod_{j=1}^h D'_{\mu_j,\lambda,f} \cdot \int_{[\Mbar_{g,n+h}(\cX,\beta)]^\vir} \frac{ \big(\prod_{i=1}^n\ev_i^*\gamma_i
\prod_{j=1}^h\ev_{n+j}^*\phi^f_{\si, (h(d_0,\lambda))^{-1}}\big)}
{\prod_{j=1}^h\frac{\wu}{\mu_j}(\frac{\wu}{\mu_j} -\bar{\psi}_{n+j})}
$$
where
$$
\beta\in H_2(\cX),\quad \beta'= \beta + \big(\sum_{j=1}^h \mu_j\big)b.
$$
\end{proposition}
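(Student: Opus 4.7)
The plan is to mimic the argument just given for the inner brane (Proposition \ref{inner-psi}), simplified by the fact that an outer brane meets only one fixed point of $\fl_\tau$, so that every disk bubble is glued to the closed part at the same stacky point $\fp_\si$. First I would apply $\bT'$-localization to $\Mbar_{(g,h),n}(\cX,\cL\mid \beta',\vmu)$; its fixed locus is indexed by decorated graphs $\Ga\in \cG=G_{(g,h),n}(\cX,\cL\mid\beta',\vmu)$. A typical $\bT'$-fixed map $u:(\Si,\bx,\bSi)\to(\cX,\cL)$ decomposes as $\Si=\cC\cup \bigcup_{j=1}^h D_j$, where $D_j=[\{|z|\le 1\}/\bZ_{r_j}]$ is an orbidisk mapped to the unique orbidisk in $\fl_\tau$ through $\fp_\si$, and $\hu=u|_\cC$ is a closed $\bT'$-fixed stable map in $\Mbar_{g,n+h}(\cX,\beta)^{\bT'}$ sending each new marked point $y_j=x_{n+j}$ to $(\fp_\si,(k_j)^{-1})\in \cI\fp_\si$. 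This sets up a bijection $\Ga\leftrightarrow\hGa\in \hcG=G_{g,n+h}(\cX,\beta)$, together with a finite-degree identification
\[
[F_\Ga]^{\vir}=\prod_{j=1}^h \frac{|G_\si|}{r_j |\Aut(u_j)|}\,[F_{\hGa}]^{\vir}=\prod_{j=1}^h\frac{s_1}{r_j\mu_j}\,[F_{\hGa}]^{\vir},
\]
exactly as in the inner brane argument but with $(\si,s_1)$ in place of $(\si_\pm,s_1^\pm)$.

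Next I would compare the virtual normal bundles $N_\Ga^{\vir}$ and $N_{\hGa}^{\vir}$ via the long exact sequence
\[
0\to B_2\to \hB_2\oplus\bigoplus_{j=1}^h H^0(D_j)\to \bigoplus_{j=1}^h (T_{\fp_\si}\cX)^{k_j}\to B_5\to \hB_5\oplus\bigoplus_{j=1}^h H^1(D_j)\to 0,
\]
where $H^k(D_j)=H^k(D_j,\partial D_j,u_j^*T\cX,(u_j|_{\partial D_j})^*T\cL)$. Two pieces of data go into this comparison. The node-smoothing term contributes $\prod_j(\frac{s_1\w_1}{r_j\mu_j}-\frac{\bar\psi_{n+j}}{r_j})$ in the denominator of $e_{\bT'}(B_4^m)/e_{\bT'}(\hB_4^m)$; and the disk contribution is exactly
\[
\frac{e_{\bT'}(H^1(D_j)^m)}{e_{\bT'}(H^0(D_j)^m)}=|\mu_j||G_1|\,D(\mu_j,k_j;f),
\]
using the $\bT'$-character computation at the end of Section 3.5 with the outer-brane version of $D(d_0,k;f)$ already derived there.

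Multiplying these pieces together, the graph contribution to $\langle\gamma_1,\dots,\gamma_n\rangle^{\cX,(\cL,f)}_{g,\beta',\vmu}$ becomes
\[
\prod_{j=1}^h\frac{s_1}{r_j\mu_j}\cdot \prod_{j=1}^h|\mu_j||G_1|D(\mu_j,k_j;f)\cdot \int_{[F_{\hGa}]^{\vir}}\frac{\prod_i(\ev_i^*\gamma_i)\prod_j(\ev_{n+j}^*\phi_{\si,(k_j)^{-1}})}{\prod_j\bigl(\frac{s_1\w_1}{r_j\mu_j}-\frac{\bar\psi_{n+j}}{r_j}\bigr)e_{\bT'}(N_{\hGa}^{\vir})}.
\]
Pulling the $r_j$ factors from the denominator cancels the $r_j$ in $\frac{s_1}{r_j\mu_j}$, producing the clean denominator $\prod_j\bigl(\frac{s_1\w_1}{\mu_j}-\bar\psi_{n+j}\bigr)$; the remaining prefactor is precisely the quantity $\prod_{j=1}^h\frac{s_1\w_1}{\mu_j}\cdot D'(\mu_j,k_j;f)$ after using the definition of $D'(d_0,k;f)$, namely $D'=\frac{s_1}{d_0}\cdot(\frac{s_1\w_1}{d_0})^{\age(k)}\cdot(\ldots)$ with the same sign and product formula recorded just above the statement.

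Finally, summing over all $\Ga\in\cG$, equivalently over all $\hGa\in\hcG$ with prescribed twistings at the last $h$ marked points, reassembles the integral into $\int_{[\Mbar_{g,n+h}(\cX,\beta)]^{\vir}}$ by the virtual localization formula applied in reverse. The hardest step I expect is the sign bookkeeping in $D'(\mu_j,k_j;f)$: one must track $\bT'$-moving parts of $H^0(D_j)$, $H^1(D_j)$ and of $(T_{\fp_\si}\cX)^{k_j}$, handle the two cases $f\ge 0$ and $f<0$ uniformly, and verify that the $\lceil\cdot\rceil$ exponent of $-1$ and the fractional-part correction $\{c_{i_2}(k)-f c_{i_1}(k)\}$ come out as written; once this is matched the rest is formal bookkeeping parallel to the inner brane case.
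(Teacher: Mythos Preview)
Your proposal is correct and follows exactly the paper's approach: the paper proves Proposition~\ref{inner-psi} in detail and then simply states Proposition~\ref{outer-psi} as the evident specialization to the outer-brane case, which is precisely the simplification you carry out (one fixed point $\fp_\si$, no $J_\pm$ splitting). One minor slip: after cancelling the $r_j$'s the remaining prefactor is $\prod_j s_1|G_1|\,D(\mu_j,k_j;f)=\prod_j D'(\mu_j,k_j;f)\cdot\frac{\mu_j}{s_1\w_1}$, not $\prod_j\frac{s_1\w_1}{\mu_j}\cdot D'$; pushing this factor into the denominator then produces the stated $\prod_j\frac{s_1\w_1}{\mu_j}\bigl(\frac{s_1\w_1}{\mu_j}-\bar\psi_{n+j}\bigr)$.
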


\subsection{Generating functions of open-closed GW invariants}\label{sec:Fgh}
From now on, we assume the generic stabilizer $K$ is trivial, so that $\cX=\cX^\rig$ is 
a toric Calabi-Yau 3-orbifold. Then
$$
\chi_3: G_\tau \lra \bmu_\fm, \quad \lambda \mapsto e^{2\pi\sqrt{-1}\bar{\lambda}/\fm}
$$
is a group isomorphism. We have $\bar{N}=N$ and $\bar{b}_i=\bar{b}_i$. In particular,
$$
b_1 = \fr \wv_1 -\fs \wv_2 + \wv_3,\quad b_2 = \fm \wv_2 + \wv_3,\quad b_3 = \wv_3.
$$
There exists $m_a,n_a\in \bZ$, such that
$$
b_{3+a} = m_a \wv_1 + n_a \wv_2 + \wv_3,\quad a=1,\ldots,k.
$$

Introduce variables $\{ X_j \mid j=1,\ldots,h\}$
and let
$$
\btau_2 = \sum_{i=1}^m \tau_i u_i
$$
where $u_1,\ldots, u_m$ form a basis of $H^2_\orb(\cX;\bQ)$.
We choose $\bT'$-equivariant lifting of $\btau_2$ as follows:
for each $u_i\in H^2_\orb(\cX;\bQ)$, we choose the unique $\bT'$-equivariant
lifting $u_i^{\bT'} \in H^2_{\orb,\bT'}(\cX;\bQ)$ such that 
$\iota_\si^* u_i^{\bT'} = 0\in H^2_{\orb,\bT'}(\fp_\si;\bQ)$, where 
$\iota_\si^*: H^2_{\orb,\bT'}(\cX;\bQ)\to H^2_{\orb,\bT'}(\fp_\si;\bQ)$ is induced by the
inclusion map $\iota_\si:\fp_\si\to \cX$.

We define $\xi_0:= e^{-\pi\sqrt{-1}/\fm}$. If $(\cL,f)$ is a framed outer brane, define
\begin{equation}
\begin{aligned}
& F_{g,h}^{\cX,(\cL,f)}(\btau_2, Q^b,  X_1,\ldots,X_h)\\
& = \sum_{\beta', n\geq 0}\sum_{(\mu_j, \lambda_j)\in H_1(\cL;\bZ)}
\frac{\langle (\iota_f^*\btau_2) ^n\rangle_{g,\beta,(\mu_1,\lambda_1),\ldots, (\mu_h,\lambda_h)}^{\cX,(\cL,f)}}{n!} 
\prod_{j=1}^h (Q^bX_j)^{\mu_j}
 \cdot \xi_0^{\bar{\lambda}_1} \one_{\lambda_1^{-1}}
\otimes \cdots \otimes  \xi_0^{\bar{\lambda}_h}   \one _{\lambda_h^{-1}}.
\end{aligned}
\end{equation}
which is a function which takes values in $H^*_\orb(\cB G_\tau;\bC)^{\otimes h}$, where
$$
H^*_\orb(\cB G_\tau;\bC)=\bigoplus_{\lambda \in G_\tau} \bC \one_\lambda.
$$
When $\lambda=1$ is the identity element of $G_\tau$, $\one_1=1$ is the unit
of $H^*_\orb(\cB G_\tau;\bC)$. 

If $(\cL,f)$ is a framed inner brane, define
\begin{equation}
\begin{aligned}
& F_{g,h}^{\cX,(\cL,f)}(\btau_2, Q^b, X_1,\ldots, X_h) =  
\sum_{\beta', n\geq 0}\sum_{(\mu_j,\lambda_j)\in H_1(\cL;\bZ)}
\frac{\langle (\iota_f^*\btau_2)^n\rangle_{g,\beta,(\mu_1,\lambda_1),\ldots, (\mu_h,\lambda_h)}^{\cX,(\cL,f)}}{n!} \\
& \quad \quad \cdot  \prod_{\substack{ j\in \{1,\ldots, h\}\\ \mu_j>0} } (Q^b X_j)^{\mu_j}
\prod_{\substack{ j\in \{1,\ldots, h\}\\  \mu_j<0} } (Q^{b-\alpha}X_j)^{\mu_j}
\cdot \xi_0^{\bar{\lambda}_1}  \one_{\lambda_1^{-1}}
\otimes \cdots \otimes \xi_0^{\bar{\lambda}_h} \one_{\lambda_h^{-1}}.
\end{aligned}
\end{equation}
which is a function which takes values in $H^*_\orb(\cB G_\tau;\bC)^{\otimes h}$.

\subsection{The equivariant $J$-function and the disk potential}
Let $\{u_i\}_{i=1}^N$ be a homogeneous basis of $H^*_{\bT,\orb}(\cX;\bQ)$, and $\{u^i\}_{i=1}^N$ be its dual basis.
Define
$$
\btau =\sum_{i=1}^N \tau_i u_i =\btau_0+ \btau_2 + \btau_{>2}
$$
where
$$
\btau_0\in H^0_{\bT,\orb}(\cX;\bC),\quad \btau_2\in H^2_{\bT,\orb}(\cX;\bC),\quad
\btau_{>2} \in H^{>2}_{\bT,\orb}(\cX;\bC).
$$

The $J$-function \cite{T, CG07, Gi96}
is a $H^*_{\bT,\orb}(\cX)$-valued function:
$$
J(\btau,z):= 1+\sum_{\beta\ge 0, n\ge 0} \frac{1}{n!}\sum_{i=1}^N \langle 1,\btau^n, \frac{u_i}{z-\bar \psi}\rangle^{\cX,\bT}_{0,\beta} u^i.
$$
Then
$$
\iota_\si^* J(\btau,z)\Big|_{\wu_1=\wu,\, \wu_2=f\wu,\, \wu_3=0} =\sum_{k\in G_\si} J^f_{\si,k}(\btau,z) \one_k,
$$
where
$$
J_{\si,k}^f(\btau,z)= \delta_{k,1} +\sum_{\beta\ge 0, n\ge 0} \frac{1}{n!} \sum_{i=1}^N \langle 1, (\iota_f^*\btau)^n, 
\frac{|G_\si|\phi^f_{\si,k^{-1}}}{z-\bar \psi}\rangle^{\cX,\bT_f}_{0,\beta}.
$$

As a special case of Proposition \ref{outer-psi},
\begin{eqnarray*}
\langle \gamma_1,\ldots, \gamma_n\rangle_{0,\beta+d_0 b,(d_0 ,k)}^{\cX,(\cL,f)}
&=& D'_{d_0 ,\lambda,f} \int_{[\Mbar_{0,n+1}(\cX,\beta)]^\vir}
 \frac{ \big(\prod_{i=1}^n\ev_i^*\gamma_i\cup \ev_{n+1}^*\phi^f_{(h^+(d_0,\lambda))^{-1}}\big)}{\frac{\wu}{d_0}(\frac{\wu}{d_0} -\bar{\psi}_{n+1})}\\
&=& D'_{d_0 ,\lambda,f} \langle 1,\gamma_1,\dots,\gamma_n,\frac{\phi^f_{(h^+(d_0,\lambda))^{-1}}}{\frac{\wu}{d_0} -\bar{\psi}} \rangle^\cX_{0,\beta};
\end{eqnarray*}
\begin{eqnarray*}
F^{\cX,(\cL,f)}_{0,1}(\btau_2,Q^b, X_1) &=& \sum_{\beta,n\geq 0} 
\sum_{(d_0 ,\lambda)\in H_1(\cL;\bZ)} \frac{1}{n!}\langle (\iota_f^*\btau_2)^n\rangle^{\cX,(\cL,f)}_{0,\beta+d_0 b,(d_0 ,\lambda)}
(Q^bX)^{d_0} \xi_0^{\bar{\lambda}}\one_{\lambda^{-1}}\\
&=& \frac{1}{|G_\si|}\sum_{(d_0,\lambda)\in H_1(\cL;\bZ)} (Q^b X_1)^{d_0} D'_{d_0 ,\lambda,f}
J^f_{\si,h^+(d_0,\lambda)}(\btau_2 ,\frac{\wu}{d_0})\xi_0^{\bar{\lambda}}\one_{\lambda^{-1}} .
\end{eqnarray*}
\begin{proposition}\label{pro:F-J} Let $X=Q^b X_1$.
If $(\cL,f)$ is a framed outer brane, then
$$
F^{\cX,(\cL,f)}_{0,1}(\btau_2,X) = \frac{1}{|G_\si|}
\sum_{(d_0 ,\lambda)\in H_1(\cL;\bZ)} X^{d_0} D'_{d_0 ,\lambda,f}J^f_{\si,h^+(d_0,\lambda)}(\btau_2,\frac{\wu}{d_0})
\xi_0^{\bar{\lambda}}\one_{\lambda^{-1}}.
$$
If $(\cL,f)$ is a framed inner brane, then
\begin{eqnarray*}
&& F^{\cX,(\cL,f)}_{0,1}(\btau_2, Q, X) \\
&=& \frac{1}{|G_\si|}\sum_{(d_0,\lambda)\in H_1(\cL;\bZ), d_0 >0} X^{d_0} D'_{d_0 ,\lambda,f}
J^f_{\si_+,h^+(d_0,\lambda)}(\btau_2,\frac{\wu}{d_0}) \xi_0^{\bar{\lambda}}\one_{\lambda^{-1}} \\
&& + \frac{1}{|G_\si|} \sum_{(d_0 ,\lambda)\in H_1(\cL;\bZ), d_0 <0}
X^{d_0} Q^{-d_0 \alpha} D'_{d_0,\lambda,f}J^f_{\si_-,h^-(d_0,\lambda)}(\btau_2,\frac{\wu}{d_0}) \cdot \frac{\fr_+}{\fr_-} \xi_0^{\bar{\lambda}}\one_{\lambda^{-1}}.
\end{eqnarray*}
\end{proposition}

\section{Mirror symmetry for the disk amplitudes} \label{sec:mirror}

\subsection{The equivariant $I$-function and the equivariant mirror theorem} \label{sec:I}
We choose $p_1,\ldots, p_k \in \bL^\vee\cap \tNef_\cX$ such that
\begin{itemize}
\item $\{p_1,\ldots,p_k\}$ is a $\bQ$-basis of $\bL^\vee_\bQ$.
\item $\{\bar{p}_1,\ldots, \bar{p}_{k'}\}$ is a $\bQ$-basis
of $H^2(\cX;\bQ)$.
\item $p_a=D_{3+a}$ for $a= k'+1,\dots,k$.
\end{itemize}
We define \emph{charges} $m^{(a)}_i\in \bQ$ by $D_i=\sum_{a=1}^k m^{(a)}_i p_a$.

Let $q'_0,q_1,\ldots, q_k$ be $k+1$ formal variables, and define
$q^\beta = q_1^{\langle p_1,\beta\rangle} \cdots q_k^{\langle p_k,\beta\rangle}$
for $\beta\in \bK$. We take the equivariant lifting $\bar p^\cT_a\in H^2_\bT(\cX;\bQ)$ of $\bar p_a\in H^2(\cX;\bQ)$. The equivariant $I$-function is an
$H^*_{\orb,\bT} (\cX)$-valued power series defined as follows \cite{Ir09}:
\begin{eqnarray*}
I(q_0',q,z) &=& e^{\frac{\log q'_0+\sum_{a=1}^{k'} \bar p_a^\cT \log q_a}{z}}
\sum_{\beta\in \bK_\eff} q^\beta \prod_{i=1}^{r'}
\frac{\prod_{m=\lceil \langle D_i, \beta \rangle \rceil}^\infty (\bar D^\cT_i  +(\langle D_i,\beta\rangle -m)z)}{\prod_{m=0}^\infty(\bar D^\cT_i+(\langle D_i,\beta\rangle -m)z)} \\
&& \quad \cdot \prod_{i=r'+1}^r \frac{\prod_{m=\lceil \langle D_i, \beta \rangle \rceil}^\infty (\langle D_i,\beta\rangle -m)z}
{\prod_{m=0}^\infty(\langle D_i,\beta\rangle -m)z} \one_{v(\beta)}
\end{eqnarray*}
where $q^\beta=\prod_{a=1}^k q_a^{\langle p_a,\beta\rangle}$. Note that $\langle p_a,\beta\rangle\geq 0$ for $\beta\in \bK_{\eff}$.
The equivariant $I$-function can be rewritten as
\begin{eqnarray*}
I(q_0',q,z) &=& e^{\frac{\log q'_0+\sum_{a=1}^{k'} \bar p_a^\cT \log q_a}{z}}
\sum_{\beta\in \bK_\eff} \frac{q^\beta}{z^{\langle \hat \rho ,\beta\rangle + \age(v(\beta))} } \prod_{i=1}^{r'}
\frac{\prod_{m=\lceil \langle D_i, \beta \rangle \rceil}^\infty ( \frac{\bar D^\cT_i}{z}  +\langle D_i,\beta\rangle -m)}
{\prod_{m=0}^\infty(\frac{\bar D^\cT_i}{z}+ \langle D_i,\beta\rangle -m)} \\
&& \quad \cdot \prod_{i=r'+1}^r \frac{\prod_{m=\lceil \langle D_i, \beta \rangle \rceil}^\infty (\langle D_i,\beta\rangle -m)}{\prod_{m=0}^\infty(\langle D_i,\beta\rangle -m)}
\one_{v(\beta)}
\end{eqnarray*}
where $\hat \rho = D_1+\cdots +D_r \in \tC_\cX$.

Since $\cX$ is a Calabi-Yau orbifold, $\age(v)$ is an integer for any $v\in \BoxS$. Then
$$
H^{\leq 2}_{\orb,\bT}(\cX) = H^0_{\orb.\bT}(\cX) \oplus H^2_{\orb,\bT}(\cX).
$$
Let $\cQ=\bQ(\wu_1,\wu_2,\wu_3)$ be the fractional field of $H^*_{\bT}(\mathrm{point};\bQ)$.
\begin{align*}
H^0_{\orb,\bT}(\cX;\cQ) &= \cQ \one , \\
H^2_{\orb,\bT}(\cX;\cQ) &= \bigoplus_{a=1}^{k'}\cQ {\bar{p}_a^\cT}\oplus \bigoplus_{\substack{v\in {\BoxS}\\ \age(v)=1}} \cQ \one_v.
\end{align*}
Recall that the embedding of the stacky fixed point $\fp_\sigma$ is $\iota_\sigma:\fp_\sigma\to \cX$. We choose the lifting $\bar{p}^\cT_a$ such that $\iota_\sigma^*\bar{p}^\cT_a=0$.

For $i=1,\ldots, r$, we will define $\Omega_i \subset \bK_\eff-\{0\}$
and $A_i(q)$ supported on $\Omega_i$. We observe that, if $\beta\in \bK_\eff$ and $v(\beta)=0$ then
$\langle D_i,\beta\rangle\in \bZ$ for $i=1,\ldots, r$.
\begin{itemize}
\item For $i=1,\ldots,r'$, let
$$
\Omega_i =\left \{ \beta\in \bK_\eff: v(\beta)=0, \langle D_i,\beta\rangle <0 \textup{ and  }
\langle D_j, \beta \rangle \geq 0 \textup{ for } j\in \{1,\ldots, r\}-\{i\} \right \}.
$$
Then $\Omega_i\subset \{ \beta\in \bK_\eff: v(\beta)=0, \beta\neq 0\}$. We define
$$
A_i(q):=\sum_{\beta \in \Omega_i} q^\beta
\frac{(-1)^{-\langle D_i,\beta\rangle-1}(-\langle D_i, \beta\rangle -1)! }{
\prod_{j\in\{1,\ldots, r\}-\{i\}}\langle D_j,\beta\rangle!}.
$$
\item For $i=r'+1,\ldots, r$, let
$$
\Omega_i := \{ \beta\in \bK_\eff: v(\beta)= b_i, \langle D_j, \beta\rangle \notin \bZ_{<0} \textup{ for }j=1,\ldots, r\},
$$
and define
$$
A_i(q) = \sum_{\beta\in \Omega_i} q^\beta
\prod_{j=1}^r \frac{\prod_{m=\lceil \langle D_j, \beta \rangle \rceil}^\infty (\langle D_j,\beta\rangle -m)}{\prod_{m=0}^\infty(\langle D_j,\beta\rangle -m)}.
$$
\end{itemize}

Let $\si$ be the smallest cone containing $b_i$. Then
$$
b_i=\sum_{j\in I_\si'}c_j(b_i)b_j,
$$
where $c_j(b_i)\in (0,1)$ and $\sum_{j\in I_\si'}c_j(b_i)=1$. There exists a unique $D_i^\vee \in \bL_\bQ$ such that
$$
\langle D_j, D_i^\vee\rangle =\begin{cases}
1, & j=i,\\
-c_j(b_i), &  j\in I'_\si,\\
0, & j\in I_\si-\{i\}.
\end{cases}
$$
Then
$$
A_i(q)=q^{D_i^\vee} + \textup{ higher order terms }
$$
$$
I(q_0',q,z)= 1 +\frac{1}{z}(\log q_0' \one+\sum_{a=1}^{k'} \log(q_a)\bar{p}^{\cT}_a +
\sum_{i=1}^{r'} A_i(q) \bar{D}_i^{\cT}+ \sum_{i=r'+1}^r A_i(q) \one_{b_i}) + o(z^{-1}).
$$
For $i=1,\ldots, r'$,
$$
\bar{\cD}^{\cT}_i =\sum_{a=1}^{k'}m_i^{(a)} \bar{p}_a^{\cT} +\lambda_i
$$
where $\lambda_i\in H^2(B\bT;\bQ)$. Let $S_a(q):=\sum_{i=1}^{r'} m_i^{(a)} A_i(q)$. Then
$$
I(q_0',q,z)= 1 +\frac{1}{z}( (\log q_0' +\sum_{i=1}^{r'}\lambda_i A_i(q)) \one
 +\sum_{a=1}^{k'} (\log(q_a) + S_a(q)) \bar{p}^{\cT}_a
+ \sum_{i=r'+1}^r A_i(q) \one_{b_i}) + o(z^{-1}).
$$

Recall that the $\bT$-equivariant $J$-function for $\cX$ is
$$
J(\btau,z)=1+\sum_{\beta\ge 0, n\ge 0} \sum_{i=1}^N \frac{1}{n!}
\langle 1,\btau^n, \frac{u_i}{z-\bar \psi}\rangle^{\cX,\bT}_{0,\beta} u^i,
$$
where $\{u_i\}_{i=1}^N$ is an $H^*(B\bT)$-basis of $H^*_\bT(\cX;\bQ)$ and $\{u^i\}_{i=1}^N$ is the dual basis. Assume $u_0=\one$, $u_a=\bar p^\cT_a$ for $a=1,\dots, k'$ and $u_a=\one_{b_{a+3}}$ for $a=k'+1,\dots, k$.
The mirror theorem for toric orbifolds \cite{CCIT} implies following theorem.
\begin{theorem}[Coates-Corti-Iritani-Tseng {\cite{CCIT}}]
\label{thm:mirror}
If the toric orbifold $\cX$ satisfies Assumption \ref{semi-proj}, then
$$
e^{\frac{\tau_0(q_0',q)}{z}  } J(\btau_2(q),z) = I(q_0',q, z),
$$
where the equivariant closed mirror map $(q_0',q)\mapsto  \tau_0(q_0',q)1 + \btau_2(q)$ is
determined by the first-order term in the asymptotic expansion of the $I$-function
$$
I(q_0',q,z)=1+\frac{\tau_0 (q_0',q) 1 +\btau_2(q)}{z}+o(z^{-1}).
$$
More explicitly, the equivariant closed mirror map is given by
\begin{eqnarray}
\label{eqn:closed-mirror-map}
\tau_0 &=& \log(q_0') +\sum_{i=1}^{r'} \lambda_i A_i(q), \nonumber\\
\tau_a &=& \begin{cases}
\log(q_a)+ S_a(q), & 1\leq a\leq k',\\
A_{a-3}(q), & k'+1\leq a\leq k.
\end{cases}.
\end{eqnarray}
\end{theorem}

\subsection{The pullback of the disk potential under the mirror map}

By Proposition \ref{pro:F-J}, if $(\cL,f)$ is a framed outer brane, then
$$
F^{\cX,(\cL,f)}_{0,1}(\btau_2,Q,X) = \frac{1}{|G_\si|} 
\sum_{(d_0 ,\lambda)\in H_1(\cL;\bZ)} X^{d_0} D'_{d_0,\lambda,f}J^f_{\si,h(d_0,\lambda)}(\btau_2,\frac{\wu}{d_0})
\xi_0^{\bar{\lambda}} \one_{\lambda^{-1}}.
$$
Let $F^{\cX,(\cL,f)}(q,X)$ be the pullback of
$F^{\cX,(\cL,f)}_{0,1}(\btau_2,Q, X)$ under the closed mirror map.

By Proposition \ref{pro:F-J}, if $(\cL,f)$ is a framed inner brane, then
\begin{eqnarray*}
&& F^{\cX,(\cL,f)}_{0,1}(\btau_2 ,Q, X) \\
&=&\frac{1}{|G_\si|}  \sum_{(d_0,\lambda)\in H_1(\cL;\bZ), d_0 >0} X^{d_0} D'_{d_0,\lambda,f}J^{f}_{\si_+,h^+(d_0,\lambda)}(\btau_2,\frac{\wu}{d_0})
\xi_0^{\bar{\lambda}} \one_{\lambda^{-1}}  \\
&& + \frac{1}{|G_\si|} \sum_{(d_0,k^+, k^-)\in H_{\tau,\si_+,\si_-}, d_0 <0} X^{d_0} Q^{-d_0 \alpha}
D'_{d_0,\lambda,f}J^{f}_{\si_-,h^-(d_0,\lambda)}(\btau_2,\frac{\wu}{d_0 })\cdot \frac{\fr_+}{\fr_-} \xi_0^{\bar{\lambda}}\one_{\lambda^{-1}}.
\end{eqnarray*}
Given $\si\in \Si(3)$, $k\in G_\si$, and $f\in \bZ$, define $I_{\si,k}^f(q,z)$ by
$$
\iota_\si^* I(q,z)\Big|_{\wu_1=\wu,\, \wu_2=f\wu,\, \wu_3=0} =\sum_{k\in G_\si}I_{\si,k}^f(q,z)\one_k.
$$
Since a toric Calabi-Yau orbifold satisfies the weak Fano condition, by the equivariant mirror theorem (Theorem \ref{thm:mirror}), we may write $F^{\cX,(\cL,f)}(q,X)$ in terms of $I_{\si,k}^f(q,z)$ in case of an outer brane, and in terms of $I_{\si_+,k^+}^f(q,z)$ and $I_{\si_-,k^-}^{f}(q,z)$ in case of an inner brane.
\begin{lemma}
\label{lemm:F(q,X)}
If $(\cL,f)$ is a framed outer brane, then
\begin{equation}
F_{0,1}^{\cX,(\cL,f)}(q,X) =  \frac{1}{|G_\si|} \sum_{(d_0,\lambda)\in H_1(\cL;\bZ)} X^{d_0} D'_{d_0,\lambda,f}
e^{\frac{-d_0 \tau_0(q) }{\wu}} I^f_{\si,h(d_0,\lambda)}(q,\frac{\wu}{d_0}) \xi_0^{\bar{\lambda}} \one_{\lambda^{-1}}.
\end{equation}
If $(\cL,f)$ is a framed inner brane, then
\begin{align*}
& F_{0,1}^{\cX,(\cL,f^+,f^-)}(q,X) \\
=& \frac{1}{|G_\si|} \sum_{(d_0,\lambda)\in H_1(\cL;\bZ), d_0 >0} X^{d_0} D'_{d_0,\lambda,f}
e^{\frac{-d_0 \tau_0(q) }{\wu}}I^{f}_{\si_+,h^+(d_0,\lambda)}(q,\frac{\wu}{d_0})
 \xi_0^{\bar{\lambda}} \one_{\lambda^{-1}}  \\
& + \frac{1}{|G_\si|}\sum_{(d_0,\lambda)\in H_1(\cL;\bZ), d_0 <0} X^{d_0}
Q^{-d_0 \alpha} D'_{d_0,\lambda,f} e^{\frac{-d_0 \tau_0(q)}{\wu}} I^{f}_{\si_-,h^-(d_0,\lambda)}(q,\frac{\wu}{d_0}) \cdot \frac{\fr_+}{\fr_-}
\xi_0^{\bar{\lambda}} \one_{\lambda^{-1}}.
\end{align*}
\end{lemma}

Let $(\cL,f)$ be a framed brane, and let $\tau$, $\si=\si_+, \si_-$ be defined as in Section \ref{sec:AV-brane}.
Recall that
\begin{align*}
& I'_\si = \{ i\in \{1,\ldots,r'\}: \rho_i\subset \si\} =\{1,2, 3 \},\quad
I_\si  = \{1,\ldots, r\} \setminus I'_\si,\\
& I'_\tau = \{i\in \{1,\ldots, r'\}:\rho_i\subset \tau\}=  \{2, 3\},\quad
I_\tau =\{1,\ldots, r\} \setminus I'_\tau,\\
& \bK_{\eff,\si} =   \{ \beta\in \bL_\bQ: \langle D_i, \beta\rangle \in \bZ_{\geq 0} \textup{ for } i\in I_\si\}.
\end{align*}
In case that $\cL$ is inner,
\begin{align*}
&I'_{\si_-}=\{2,3,4\},\ I_{\si_-}=\{1,\dots,r\}\backslash I'_{\si_-}\\
&\bK_{\eff,\si-} =   \{ \beta\in \bL_\bQ: \langle D_i, \beta\rangle \in \bZ_{\geq 0} \textup{ for } i\in I_{{\si_-}}\}.
\end{align*}
Let $\wb_{\si,i}=\iota_\si^* \bar{D}^\cT_i \in H^2_{\bT}(\fp_\si;\bQ)=H^2(B\bT;\bQ)$
for $1\leq i\leq r$, and then $\wb_i=0$ for $r'+1\leq i\leq r$.  For $\beta\in \bK_{\eff,\si}$, define an $H^*({ B\bT};\bQ)$-valued
\begin{equation}
I(\si,\beta): =
\prod_{i=1}^r \frac{\prod_{m=\lceil \langle D_i, \beta \rangle \rceil}^\infty (\wb_{\si,i} +(\langle D_i,\beta\rangle -m)\frac{\wu_1}{d_0})}
{\prod_{m=0}^\infty(\wb_{\si,i} +(\langle D_i,\beta\rangle -m)\frac{\wu_1}{d_0})}
\end{equation}
Recall that $\iota_\si^*\bar{p}_a^{\cT}=0$, so
$$
\iota_\si^* I(q,z)|_{z=\frac{\wu_1}{d_0}} = \sum_{\beta\in \bK_{\eff,\si} }
e^{\frac{d_0}{\wu_1}\log q_0'} q^\beta I(\si,\beta) \one_{v(\beta)}.
$$
With the above notation, if $\cL$ is an outer brane  we can rewrite $F_{0,1}^{\cX,(\cL,f)}(q,X)$ as
$$
F^{\cX,(\cL,f)}_{0,1}(q,X)=\frac{1}{|G_\si|} \sum_{(d_0,\lambda)\in H_1(\cL;\bZ)}
\sum_{\beta\in \bK_{\eff,\si}, v(\beta)=h(d_0,\lambda)} x^{d_0} q^\beta D'_{d_0,\lambda,f}I^f(\si,\beta)
\xi_0^{\bar{\lambda}}  \one_{\lambda^{-1}}
$$
where $I^f(\si,\beta)=I(\si,\beta)|_{\wu_1=\wu,\, \wu_2=f\wu,\, \wu_3=0}$, and
$$
x=X \exp\big(\frac{\log q_0'-\tau_0(q)}{\wu_1}\big)
$$
is the B-brane moduli parameter.

Following \cite{LM01,Ma01}, we define {\em extended charge vectors}
$$
\{m^{(a)}_i\}_{i=1,\dots,r}^{a=0,\dots,k}=\begin{pmatrix}
 w_1 & w_2 & w_3 & 0 & \dots\\
& & \{ m^{(a)}_i\}_{i=1,\dots,r}^{a=1,\dots,k} &
\end{pmatrix},
$$
such that $m^{(0)}_i=w_i$ for $i=1,2,3$ and $m^{(0)}_i=0$ for $i=4,\dots, r$. Recall that
$$
\tau_0 + \sum_{a=1}^{k'} \tau_a  \bar{p}_a^{\cT} +  \sum_{a=k'+1}^k \tau_a \one_{b_{a+3}} =
\log q_0' + \sum_{a=1}^{k'} \log q_a \bar{p}_a^{\cT} +\sum_{i=1}^{r'} A_i(q)\bar{\cD}^{\cT}_i +\sum_{i=r'+1}^r A_i(q)\one_{b_i}.
$$
We pull back the above identity under $\iota_\si^*$. Since
$$
\iota_\si^*\bar{p}_a^{\cT}=0,\quad
\iota_\si^*\cD^{\cT}_i \Bigr|_{\wu_1=\wu,\, \wu_2=f\wu,\, \wu_3 = 0} =  m^{(0)}_i \wu,
$$
we get
$$
\tau_0(q_0',q)=\log q'_0+ \sum_{i=1}^{r'} m^{(0)}_i  A_i(q) \wv.
$$
So the open mirror map is given by
\begin{equation}
\label{eqn:open-mirror-map}
\log  X = \log x + \sum_{i=1}^{r'} m^{(0)}_i A_i(q).
\end{equation}
If $\cL$ is inner, we further set $Q=q$. Denote the pullback of the disk potential $W^{\cX,(\cL,f)}(q,x)$ to be the pullback of $F_{0,1}^{\cX,(\cL,f)}(q,X)$ under this open mirror map. Then by Lemma \ref{lemm:F(q,X)}
\begin{equation}
|G_\si| W^{\cX,(\cL,f)}(q,x)=\begin{cases}\displaystyle{\sum_{\substack{d_0>0,\beta\in \bK_{\eff,\si} \\ v(\beta)=h(d_0,\lambda)}} x^{d_0}q^\beta D'_{d_0,\lambda,f} I^f(\sigma,\beta)
\xi_0^{\bar{\lambda}} \one_{\lambda^{-1}} },& \text{$\cL$ is outer,}\\
\\
\displaystyle{\sum_{\substack{d_0>0,\beta \in \bK_{\eff,\si_+} \\ v(\beta)=h^+(d_0,\lambda)}} x^{d_0}q^\beta D'_{d_0,\lambda,f} I^f(\sigma^+,\beta)
\xi_0^{\bar{\lambda}}}\one_{\lambda^{-1}} \\\qquad\qquad\displaystyle{+\frac{\fr_+}{\fr_-}\sum_{\substack{d_0<0, \beta\in \bK_{\eff,\si_-} \\ v(\beta)=h^-(d_0,\lambda)}} x^{d_0}q^{\beta-d_0\alpha} D'_{d_0,\lambda,f} I^f(\sigma^-,\beta) \xi_0^{\bar{\lambda}} \one_{\lambda^{-1}} },& \text{$\cL$ is inner}.
\end{cases}
\label{eqn:W}
\end{equation}



Given $\tbeta=(d_0,\beta)\in \bZ\times \bK_\si$, define the extended or open sector pairing to be
$$
\langle D_i, \tbeta \rangle= m_i^{(0)}d_0 +\langle D_i, \beta \rangle.$$

Recall that $\{D_i: i\in I_\si\}$ is a $\bQ$-basis of $\bL^\vee_\bQ \cong \bQ^k$ and a $\bZ$-basis of $\bK_\si^{{ \vee}}\cong \bZ^k$.
Let $v_a= D_{a+3}$ for $a=1,\dots, k$., and let $\{ h_a\}_{a=1,\ldots,k}$ be the dual $\bQ$-basis
of $\bL_\bQ$. Then $\{h_a\}_{a=1,\ldots,k}$ is a $\bZ$-basis
of $\bK_\si\cong \bZ^k$, and
$$
\bK_{\eff,\si} =\sum_{a=0}^k\bZ_{\geq 0} h_a.
$$
Given any $(d_0,\beta)\in \bZ\times \bK_\si$, define
$$
q^{\tbeta} = x^{d_0} q^\beta = x^{d_0} \prod_{a=1}^k q_a^{\langle p_a,\beta\rangle}.
$$
Define
$$
\bK_{\eff}(\cX,\cL) =\{\tbeta=(d_0,\beta)\in \bZ  \times \bK_{\eff,\si}: \langle D_1,\tbeta\rangle\in \bZ_{\geq 0},d_0\neq 0 \}.
$$
\begin{theorem} \label{thm:amplitude-term-outer}
Assuming the Aganagic-Vafa brane $(\cL,f)$ is either inner or outer,
\begin{align}
\label{eqn:amplitude-term-outer}
W^{\cX,(\cL,f)}(q,x)=\sum_{\substack{\tbeta\in \bK_{\eff}(\cX,\cL) \\ v(\beta)=h(d_0,\lambda)}} q^\tbeta A^{\cX,(\cL,f)}_{\tbeta}\xi_0^{\bar{\lambda}}\one_{\lambda^{-1}}.
\end{align}
where
\begin{align*}
A^{\cX,(\cL,f)}_{\tbeta=(d_0,\beta)}= \frac{- (-1)^{\lfloor \langle D_3,\tbeta \rangle \rfloor} }{m d_0
\prod_{i\in I_\tau} \langle D_i,\tbeta\rangle!}
\cdot \frac{\Gamma(-\langle D_{3},\tbeta \rangle) }{\Gamma(\langle D_{2},\tbeta \rangle+1)}. 
\end{align*}
\end{theorem}

\begin{proof}
Assume $\cL$ is an outer brane. Let $\tbeta=(d_0,\beta)$, and let $\ep_j =c_{j}(v)$ for $j=1,2,3$. By \eqref{eqn:W} (and reall that $|G_\si|=\fr\fm$)
$$
W^{\cX,(\cL,f)}(q,x)= \frac{1}{\fr \fm} \sum_{\substack{d_0>0\\ \beta\in \bK_{\eff,\si} \\ v(\beta)=h(d_0,\lambda)}} 
x^{d_0}q^\beta D'_{d_0,\lambda,f} I^f(\sigma,\beta)\xi_0^{\bar{\lambda}}\one_{\lambda^{-1}}.
$$
Given any $\beta\in \bK_{\eff,\si}$, we have $\lceil \langle D_{j},\beta\rangle \rceil -\ep_j =\langle D_{j},\beta\rangle$ for $j=1, 2,3$, and $\lceil\langle D_i, \beta\rangle \rceil =\langle D_i,\beta\rangle$ for $i\in I_\si$.
The disk factors
\begin{align*}
D'_{d_0,\lambda,f}&= -(-1)^{\lfloor d_0 w_3 +\frac{\bar \lambda}{m}\rfloor}
\frac{\fr}{d_0} \left ( \frac{\wu}{d_0} \right)^{\age(h(d_0,\lambda))}
\frac{\prod_{a=1}^{\lfloor d_0 w_1 \rfloor + \age(h(d_0,\lambda)) -1} (d_0w_2 + a-\ep_2)}{\lfloor d_0w_1 \rfloor!} \\
&= -(-1)^{\lfloor d_0 w_3+\frac{\bar \lambda}{m}\rfloor }
\frac{\fr}{d_0} \left ( \frac{\wu}{d_0} \right)^{\age(h(d_0,\lambda))}\frac{1}{\Gamma(w_1d_0-\epsilon_1+1)}\cdot \frac{\Gamma(-w_3d_0+\epsilon_3)}{\Gamma(w_2d_0-\epsilon_2+1)}
\end{align*}
The pullback of the coefficients of the $I$-function is
\begin{align*}
& I^f(\si,\beta)
=& (\frac{\wu}{d_0})^{-\age(h(d_0,\lambda))}\frac{1}{\prod_{i\in I_\sigma}\Gamma(\langle D_i,\beta \rangle+1)}\cdot \frac{\Gamma(w_1d_0-\epsilon_1+1)}{\Gamma(\langle D_1,\tbeta \rangle +1)}\cdot \frac{\Gamma(w_2d_0-\epsilon_2+1)}{\Gamma(\langle D_2,\tbeta \rangle +1)}\cdot \frac{\Gamma(w_3d_0-\epsilon_3+1)}{\Gamma(\langle D_3,\tbeta \rangle +1)}.
\end{align*}
Hence
$$
W^{\cX,(\cL,f)}(q,x)=\sum_{\substack{\tbeta\in \bK_\eff(\cX,\cL)\\v(\beta)=h(d_0,\lambda)}} x^{d_0} q^\beta A^{\cX,(\cL,f)}_{(d_0,\beta)} \xi_0^{\bar{\lambda}} \one_{\lambda^{-1}}
$$
where
$$
A^{\cX, (\cL,f)}_{\tbeta} =
(-1)^{\lfloor \langle D_3,\tbeta \rangle \rfloor}  \frac{-1}{\fm d_0} \frac{1}{\prod_{i\in I_\tau}\Gamma(\langle D_i,\tbeta \rangle +1)}\cdot \frac{\Gamma (-\langle D_3,\tbeta \rangle )}{\Gamma (\langle D_2,\tbeta \rangle+1)}.
$$
Note that $\bK_\eff(\cX,\cL)\subset \bZ_{>0} \times \bK_{\eff,\si}$, and for any $(d_0,\beta)\in (\bZ_{>0}\times\bK_{\eff,\si}) \backslash \bK_\eff(\cX,\cL)$, $A^{\cX,(\cL,f)}_{(d_0,\beta)}=0$.

In case that $\cL$ is inner, by \eqref{eqn:W}
$$
W^{\cX,(\cL,f)}(q,x) = I_+ + I_-,
$$
where
\begin{align*}
I_+&=\sum_{\substack{d_0>0, \beta\in \bK_{\eff,\si_+}\\v(\beta)=h^+(d_0,\lambda)}} x^{d_0} q^\beta \frac{-1}{\fm d_0} \frac{(-1)^{\lfloor w^+_3 d_0 +\langle D_3,\beta \rangle \rfloor}}{\prod_{i\in I_{\sigma^+}}\Gamma(\langle D_i,\beta \rangle +1)}\cdot \frac{\Gamma (-w^+_3d_0 -\langle D_3, \beta \rangle )}{\Gamma (w^+_2d_0+\langle D_2,\beta \rangle+1)\Gamma(w^+_1d_0+\langle D_1,\beta \rangle +1)}\xi_0^{\bar{\lambda}}\one_{\lambda^{-1}},\\
I_-&=\sum_{\substack{d_0<0, \beta\in \bK_{\eff,\si_-}\\v(\beta)=h^-(d_0,\lambda)}} x^{d_0}q^{\beta-d_0\alpha} \frac{-1}{\fm d_0} \frac{(-1)^{\lfloor w^-_2 d_0 +\langle D_2,\beta \rangle \rfloor}}{\prod_{i\in I_{\sigma^-}}\Gamma(\langle D_i,\beta \rangle +1)}\cdot \frac{\Gamma (-w^-_2d_0 -\langle D_2, \beta \rangle )}{\Gamma (w^-_3d_0+\langle D_3,\beta \rangle+1)\Gamma(w^-_1d_0+\langle D_4,\beta \rangle +1)}\xi_0^{\bar{\lambda}}\one_{\lambda^{-1}}\\
&=\sum_{\substack{\beta\in \bK_{\eff,\si_-}\\\langle \beta,D_4\rangle+w_1^-d_0\in \bZ_{\geq 0}\\d_0<0, v(\beta)=h^-(d_0,\lambda)}} x^{d_0}q^{\beta-d_0\alpha} 
\frac{-1}{\fm d_0} \frac{(-1)^{\lfloor w^-_2 d_0 +\langle D_2,\beta \rangle \rfloor}}{\prod_{i\in I_{\sigma^-}}\Gamma(\langle D_i,\beta \rangle +1)}\cdot \frac{\Gamma (-w^-_2d_0 -\langle D_2, \beta \rangle )}{\Gamma (w^-_3d_0+\langle D_3,\beta \rangle+1)\Gamma(w^-_1d_0+\langle D_4,\beta \rangle +1)}\xi_0^{\bar{\lambda}}\one_{\lambda^{-1}}.
\end{align*}
We have
$$
\langle D_{1}, \alpha \rangle = w^+_1,\quad
\langle D_{2}, \alpha \rangle =w^+_2-w_2^-,\quad
\langle D_{3}, \alpha \rangle=w^+_3-w_3^-,\quad
\langle D_{4}, \alpha \rangle =- w^-_1,
$$
and $\langle D_i,\alpha\rangle =0$ for $i\in I\backslash\{1, 2, 3, 4\}$.
So for $\beta\in \bK_{\eff,\si_-}$,
\begin{align*}
\langle D_{1},\beta\rangle &= \langle D_{1}, \beta-d_0\alpha\rangle + d_0w_1^+,\\
w_2^-d_0 +\langle D_{2}, \beta \rangle &= w^+_2d_0 +\langle D_{2}, \beta-d_0\alpha\rangle, \\
w_3^-d_0 +\langle D_{3}, \beta \rangle &= w_3^+d_0 +\langle D_{3}, \beta-d_0\alpha\rangle, \\
d_0w_1^- +\langle D_{4},\beta\rangle &= \langle D_{4}, \beta-d_0\alpha\rangle.
\end{align*}
Since the conditions $\langle \beta,D_4\rangle+w_1^-d_0\in \bZ_{\geq_0}$ and $\beta \in \bK_{\eff,\si_-}$ implies $(d_0,\beta-d_0\alpha) \in \bK_\eff(\cX,\cL)$,
\begin{align*}
I_-= \sum_{\substack{(d_0,\beta-d_0\alpha)\in \bK_\eff(\cX,\cL)\\d_0<0}} & x^{d_0}q^{\beta-d_0\alpha}  \frac{-1}{\fm d_0} \frac{(-1)^{\lfloor w^+_2 d_0 +\langle D_2,\beta-d_0\alpha \rangle \rfloor}}{\prod_{i\in I_{\sigma^+}}\Gamma(\langle D_i,\beta-d_0\alpha \rangle +1)}
\\&\cdot \frac{\Gamma (-w^+_2d_0 -\langle D_2, \beta-d_0\alpha \rangle )}{\Gamma (w^+_3d_0+\langle D_3,\beta-d_0\alpha \rangle+1)\Gamma(w^+_1d_0+\langle D_1,\beta-d_0\alpha \rangle +1)}\xi_0^{\bar{\lambda}}\one_{\lambda^{-1}}.
\end{align*}
So
$$
I_++I_-=\sum_{\substack{\tbeta\in \bK_{\eff}(\cX,\cL)\\v(\beta)=h^+(d_0,\lambda)}} x^{d_0} q^\beta \frac{-1}{\fm d_0} \frac{(-1)^{\lfloor \langle D_3,\tbeta \rangle \rfloor}}{\prod_{i\in I_{\sigma^+}}\Gamma(\langle D_i,\tbeta \rangle +1)}\cdot \frac{\Gamma ( -\langle D_3, \tbeta \rangle )}{\Gamma (\langle D_2,\tbeta \rangle+1)\Gamma(\langle D_1,\tbeta \rangle +1)}\xi_0^{\bar{\lambda}}\one_{\lambda^{-1}}.
$$

\end{proof}
\begin{remark}
When $\cL$ is an outer brane, the condition $\tbeta=(d_0,\beta)\in \bK(\cX,\cL)$ implies $d_0>0$. One has
$$
\sum_{a=1}^k \langle D_{a+3},\beta \rangle  b_{a+3}=-\sum_{i=1}^3 \langle D_i,\beta \rangle b_i.
$$
Since $\langle D_{a+3},\beta \rangle \geq 0$, the fan $\Si$ is convex, and this is an outer brane, every $1$-cone $b_i$ is on the same side of the plane spanned by $b_2,b_3$. Therefore, $\langle \beta, D_1 \rangle\leq 0$. From $w_1 d_0 +\langle D_1,\beta\rangle\in \bZ_{\geq 0}$ we see that $d_0>0$.
\end{remark}


\subsection{The B-model and the framed mirror curve}

The mirror B-model to the toric Calabi-Yau threefold $\cX$ is another non-compact Calabi-Yau hypersurface $Y\subset \bC^2\times (\bC^*)^2$, constructed as the Hori-Vafa mirror \cite{HV00}. It is equivalent to an affine mirror curve $C_q \subset (\bC^*)^2$. We state the relevant mirror prediction for disk amplitudes from \cite{AV00, AKV02}.

\subsubsection{Toric degeneration}
\label{sec:toric-degeneration}
The main reference of this subsection is \cite[Section 3]{NS06}.

The set
\[
\Theta_0=\bigcap_{I\in \cA} \sum_{i\in I} \bQ_{\geq 0} D_i\subset \bL^\vee_\bQ
\]
is a top dimensional convex cone in $\bL^\vee_\bQ \cong \bQ^{k}$.  The cone $\Theta_0$ together with its faces is a fan in $\bL^\vee_\bR$ denoted by $\Theta$. This fan determines a $k$-dimensional affine toric variety $X_{\Theta}$.

Consider the exact sequence induced from \eqref{eqn:exact-sequence-M} (notice $N_\tor=0$)
\[
0\longrightarrow M'  \stackrel{\phi'^\vee}{\longrightarrow} \tM' \stackrel{\psi'^\vee}{\longrightarrow} \bL^\vee \longrightarrow 0
\]
where $M'=M/\langle \wv_3 \rangle$ and $\tM'=\tM/ \langle \phi^\vee(\wv_3)
\rangle.$ Let $D'^{\cT}_i$ be the image of $D^\cT_i$ when passing to $\tM'$. For any proper subset $I\subset \{1,\dots,r\}$ and a cone $\upsilon \in \Theta$, define
\[
\widetilde \Xi_I=\sum_{i\in I} \bQ_{\geq 0} D'^{\cT}_i , \quad \tTheta_{I,\upsilon}=(\psi'^\vee)^{-1} (\upsilon)\cap \widetilde \Xi_I.
\]
Define a fan
\[
\tTheta=\{ \tTheta_{I,\upsilon}| I \subsetneq\{1,\dots,r\},\upsilon\in \Theta\}\sqcup \{0\}.
\]
This fan determines a toric variety $X_{\tTheta}$. There is a fan
morphism $\rho': \widetilde \Theta\to \Theta$, which induces a flat
family of toric surfaces $\rho: X_{\tTheta}\to X_{\Theta}$.

Let $\Theta_0'\subset \bL^\vee_{\bQ}$ be the cone spanned by $p_1,\ldots, p_{k}$. Let $\bL'^\vee := \bigoplus_{a=1}^{k} \bZ p_a$ and
let $\bL'$ be the dual lattice. Then $\bL'^\vee$ is a sublattice of
$\bL^\vee$ of finite index, and $\bL$ is a sublattice of $\bL'$ of finite index. Let $\Theta_0^\vee$ and $\Theta_0'^\vee$ be the dual cones of $\Theta_0$ and
$\Theta_0'$, respectively.  We have inclusions
$$
\Theta_0'\subset \Theta_0,\quad
\Theta_0^\vee \subset \Theta_0'^\vee.
$$
Since that $\Theta_0^\vee\cap \bL$ is a subset of $\Theta_0'^\vee \cap \bL'$, we have an injective ring homomorphism
$$
\bC[\Theta_0^\vee\cap \bL]\to \bC[\Theta_0'^\vee \cap \bL']=\bC[q_1,\ldots, q_k]
$$
where $q_1,\ldots, q_{k}$ are the variables in Section \ref{sec:I}.  Taking the spectrum, we obtain
a morphism
$$
\bA^{k}= \mathrm{Spec}\left(\bC[q_1,\ldots,q_{k}]\right) \longrightarrow
X_{\Theta}= \mathrm{Spec}\left(\bC[\Theta_0^\vee \cap  \bL]\right).
$$
and a cartesian diagram
\begin{equation} \label{eqn:pullback}
\begin{CD}
\fX @>{\tnu}>> X_{\tTheta} \\
@V{\trho}VV  @V{\rho}VV\\
\bA^{k} @>{\nu}>> X_{\Theta}
\end{CD}
\end{equation}
where $\trho:\fX\to \bA^{k}$ is a flat family of toric surfaces.


We choose a K\"{a}hler class $[\omega(\eta)]\in H^2(X_\Si;\bZ)$ associated to a lattice point
$\eta\in \bL^\vee$; $[\omega(\eta)]$  is the first Chern class of some
ample line bundle over $X_\Si$.  Then it determines a toric graph
$\Gamma\in M'_\bR\cong \bR^2$ up to translation by an element in $M'\cong\bZ^2$ (see Section \ref{sec:toric-graph}). The toric graph gives a {\em polyhedral decomposition} of $M_\bQ$ in the sense of \cite[Section 3]{NS06}.
It is a covering $\cP$ of $M_\bQ$ by strongly convex lattice polyhedra. The asymptotic fan of $\cP$
is defined to be
$$
\Si_{\cP}:=\{ \lim_{a\to 0}a\Xi  \subset M'_\bQ:  \Xi\in \cP\}.
$$
The fan $\Si_{\cP}=\widetilde \Theta \cap \rho'^{-1}(0)$ defines a toric surface $\bS$, which is the same as the toric surface given by the defining polytope $\Delta$. For each $\Pi\in \cP$, let $C(\Pi)\subset M'_\bQ\times\bQ_{\geq 0}$ be the closure of the
cone over $\Xi\times \{1\}$ in $M'_\bQ\times \bQ$. Then
$$
\tSi_\cP:=\{ \sigma\text{ is a face of $C(\Pi)$}: \Pi\in
\cP\}=\widetilde \Theta \cap \rho'^{-1} (\bQ_{\geq0} \eta)
$$
is a fan in $M'_\bQ\times \bQ$ with support $|\tSi_\cP|= M_\bQ'\times
\bQ_{\geq 0}$. The projection $\pi':M'_\bQ\times \bQ\to \bQ$ to the second factor defines a map
from the fan $\tSi_\cP$ to the fan $\{0, \bQ_{\geq 0} \}$. This map of fans determines
a flat toric morphism $\pi: X_{\tSi_{\cP}}\to \bA^1$, where
$X_{\tSi_{\cP}}$ is the toric 3-fold defined by the fan $\tSi_\cP$. Let $t$ be a closed point in $\bA^1$, and let $X_t$ denote the fiber of $\pi$ over $t$. Then $X_t\cong \bS$ for
$t\neq 0$. As shown in \cite{NS06},  when $t=0$, we have a union of irreducible components, where each $\bS_\upsilon$ is the toric surface defined by the polytope $\Delta_\upsilon$ (recall each $3$-cone is a cone over a triangle $\Delta_\upsilon\subset \Delta$ in $N_\bR$)
$$
X_0 =\bigcup_{\upsilon\in \Si(3)}\bS_{\upsilon}.
$$
If $\upsilon' \in \Si(2)$, $\upsilon\in \Si(3)$ and $\upsilon' \subset \upsilon$, $\upsilon'$ corresponds to a torus invariant divisor $\bD_{\upsilon'} \subset \bS_{\upsilon}$. We have the following commutative diagram
\begin{equation} \label{eqn:pullback-2}
  \begin{tikzcd}
       X_0\arrow[hookrightarrow]{r} \arrow{d}
    &  X_{\tSi_\cP} \arrow[hookrightarrow]{r}\arrow{d}{\pi}
    & \fX \arrow{r}{\tnu}\arrow{d}{\trho}
    & X_{\tTheta} \arrow{d}{\rho}
     \\
      \{0\} \arrow[hookrightarrow]{r}
    &  \bA^1 \arrow[hookrightarrow]{r}
    & \bA^k \arrow{r}[swap]{\nu}
    & X_{\Theta}.
  \end{tikzcd}
\end{equation}

The polytope $\mathrm{Hull}(\tb_1,\dots,\tb_r)\subset \tN$ lies
on the hyperplane $\langle \phi^\vee(\wu_3),\bullet \rangle =1$. It
determines a polytope on $\tN'=\{\langle \phi^\vee(\wu_3),\bullet \rangle
=0\}$ up to a translation. The associated line bundle $\mathfrak L$ on
$X_{\widetilde \Theta}$ has sections $u_i,\ i=1,\dots, r$ associated to each
integer point in this polytope. Define
\[
u=\sum_{i=1}^{r} u_i,\quad \widetilde \fC=u^{-1}(0).
\]

The divisor $\widetilde \fC\subset X_{\tTheta}$. Let $\fC:=\tnu^{-1}(\widetilde\fC)\subset \fX$
be the pullback divisor under the morphism $\tnu:\fX\to X_{\tTheta}$.  Then $\fC\to \bA^{k}$ is a flat family of curves over $\bA^{k}$.

For $q\neq 0$, $\fC_q= \trho^{-1}(q)\cap \fC$ can
be identified with the zero locus of
\begin{equation}
  \label{eqn:mirror-curve}
  H_q(x,y)= x^{\fr}y^{-\fs-\fr f} + y^{\fm} +1 + \sum_{a=1}^k s_a (q) x^{m_a} y^{n_a-f m_a},
\end{equation}
where $x^{\fr} y^{-\fs-\fr f}= u_1 u_3^{-1}$, $y^{\fm}=u_2 u_3^{-1}$ while 
$s_a(q)x^{m_a} y^{n_a-f m_a}=u_{3+a} u_3^{-1}$ for $a=1,\ldots,k$.  Here $x,y$ are affine coordinates of the toric surface $\bS$.

For any $\beta\in \bK_\eff(\cX,\cL)$, let $s^\beta=\prod_{a=1}^k s_a^{\langle D_{a+3},\beta\rangle}$. 
If we write $p_a=\sum_{b=1}^k p^b_a D_{b+3}$, we have
\begin{equation}
\label{eqn:q-to-s}
s_a=\prod_{b=1}^k q_b^{p^a_b},\quad s^\beta=q^\beta.
\end{equation}
Denote $s^\tbeta=x^{d_0} s^\beta$ for $\tbeta=\langle d_0,\beta\rangle$.

When $q=0$, we have a union of irreducible components
$$
\fC_0 =\bigcup_{\upsilon \in \Si(3)}\bar C_{0,\upsilon}.
$$
Each irreducible component $\bS_{{\upsilon}}$ of the central fiber $X_0$ is given by the equation $\{u_i=0,\ b_i\notin \upsilon\}$. On $\bS_{{\si_+}}$, the coordinates in the affine chart $u_3\neq 0$ are
$$
x^{\fr} y^{-\fs-\fr f}= u_1 u_3^{-1},\ y^m=u_2 u_3^{-1},$$
while on $\bS_{{\si_-}}$, the coordinates are
$$
u_4u_3^{-1}=(q^{\alpha}x^{-1})^{\fs^-}y^{n_1+s^-f}, u_2u_3^{-1}=y^{\fm}.
$$
Here $b_4=m_1\wv_1 + n_1 \wv_2+\wv_3$, $m_1=-\fs^-$ and $\alpha=[\fl_{\tau}]$.  Define
\begin{align*}
U=&\{ (q_1,\ldots,q_k)\in (\bC^*)^{k}\times \bC^{k-k'}:
\fC_{q} \textup{ is smooth }\\
&\text{and intersects $\partial \bS$
transversally at distinct points} \}.
\end{align*}
Then $U$ is a dense open subset of $\bA^{k}$.

\subsubsection{Mirror curve and the mirror conjecture for disk amplitudes}

When $q\neq 0$, we denote $C_q=\fC_q\setminus (\partial \bS)$. Thus the mirror curve $C_q\subset (\bC^*)^2$ is given by Equation \eqref{eqn:mirror-curve}. On $\bar C_{0,\upsilon}$, when $x=0$, there are $m$ points, called large radius limit (LRL) points. They are given by
$$
x=0,\quad  y^{\fm}=-1.
$$
If $\cL$ is outer, these points are smooth points in $\bar C_{0,\si}\subset \fC_0$; if $\cL$ is inner, they are the nodal points $\bar C_{0,\si_+}\cap \bar C_{0,\si_-}$.

The group $G^*_\si=\{(t_1,t_2)\in (\bC^*)^2|t_1^{\fr} =t_2^{\fs},t_2^{\fm}=1\}$ fits into the short exact sequence
$$
1\to \bmu_{\fr}^* \to G^*_\si \to \bmu_m^* \to 1,
$$
where $G_\sigma^{{ *}}\to \bmu^*_{\fm}$ is given by $(t_1,t_2)\mapsto t_2$. 
Let \[\chi_1=(e^{2\pi\sqrt{-1}\frac{1}{\fr}},1),\quad \chi_2=(e^{2\pi\sqrt{-1}\frac{\fs}{\fr \fm}},e^{2\pi\sqrt{-1}\frac{1}{\fm}}).\]
Then $G^*_\sigma=\{\chi_1^j \chi_2^l|j\in\{0,\dots,\fr -1\},l\in\{0,\dots,\fm-1\}\}$. It pairs with $G_\si$ by
\[
\chi_1(h)=e^{2\pi\sqrt{-1}c_1(h)},\quad \chi_2(h)=e^{2\pi\sqrt{-1}c_2(h)},\quad h\in G;
\]
and acts on the family of compactified mirror curves $\bar{\mathcal C}$ by
\begin{align*}
\chi_1\cdot (x,y,s_a)&=(e^{2\pi\sqrt{-1}\frac{1}{\fr}}x,y,e^{-2\pi\sqrt{-1}c^\sigma_1(b_{a+3})}s_a),\\
\chi_2\cdot (x,y,s_a)&=(e^{2\pi\sqrt{-1}\frac{s+\fr f}{\fr \fm}}x,e^{2\pi\sqrt{-1}\frac{1}{\fm}}y,e^{-2\pi\sqrt{-1}c^\sigma_2(b_{a+3})}s_a).
\end{align*}
Here $c_i^\sigma(b_{a+3})$ is defined as $h_{a+3}=\sum_{i=1}^3c_i^\sigma(b_{a+3}) b_i$. The group $\bmu_{\fm}^*$ acts freely and transitively on the set of LRL points ($x=0$ on $\bar C_{0,\si_+}$).


Given $\bar{\eta}\in \{0,1,\ldots, \fm -1\}$, let $\eta\in G_\tau ^*$ be the element
associated to the character
$$
\chi_\eta: G_\tau \to \bC^*,\quad \chi_\eta(\lambda)= \exp(\frac{2\pi\sqrt{-1}}{\fm}
\bar{\eta}\bar{\lambda}).
$$
Then $\bar{\eta}\mapsto \eta$ is a bijection from $\{0,1,\ldots,\fm-1\}$ to $G_\tau^*$.
Given $\eta\in G_\tau^*$, define $\mathfrak{u}_\eta \in \fC_0$ by
$$
\mathfrak u_\eta
=(0,e^{\pi\sqrt{-1}(-1+ 2\bar{\eta})/\fm}).
$$

For a small $\epsilon$, one can always find small $\epsilon'(\epsilon)<\epsilon$ such that when $\|q\|< \epsilon'(\epsilon)$ the following set
$$
U^{\epsilon,\epsilon'}=\begin{cases}
\{(x,q),|x|<\epsilon,U\}, \quad \text{$\cL$ is outer;}\\
\{(x,q),|x|<\epsilon, \text{$|q^\alpha x^{-1}|<\epsilon$ whenever $\|q\|<\epsilon'$}\} , \quad \text{$\cL$ is inner;}
\end{cases}
$$
is not empty. Let $U^\epsilon=U^{\epsilon,\epsilon'(\epsilon)}\times \{\|q\|< \epsilon'(\epsilon),q\in U\}\subset \fC$. When $\epsilon$ is sufficiently small, $U^{\epsilon,\epsilon'(\epsilon)}$ is a disjoint union of $\fm$ small { contractible} regions when $\cL$ is outer, or is a disjoint union of $m$ annuli when $\cL$ is inner. Let $U^\epsilon_\eta$ be the unique connected component of $U^\epsilon$ containing $\mathfrak u_\eta$. So $\log y$ is well defined on $U^\epsilon_\eta$ up to an integral multiple of $2\pi \sqrt{-1}$, and it could be written as a power series in $x$ when $\cL$ is outer, and a Laurent series in $x$ when $\cL$ is inner.

Define
$$
\phi_\eta:=\frac{1}{\fm}\sum_{\lambda\in G_\tau} \chi_\eta(\lambda^{-1})\one_\lambda.
$$
Then $\{ \phi_\eta: \eta\in G_1^*\}$ is the canonical basis of $H^*_\orb(\cB\mu_{\fm})$, and
$$
\one_\lambda=\sum_{\eta \in \bmu_{\fm}^*} \chi_\eta (\lambda) \phi_\eta.
$$

We prove the following mirror theorem for disk amplitudes.  Note that the ambiguity in $\log y$ does not play any role in the statement.
\begin{theorem} 
\label{conj:akv}
\[x\frac{\partial}{\partial x}\sum_{\eta \in \bmu_m^*}(\log y|_{U^\epsilon_\eta})\phi_\eta  =\left (x\frac{\partial}{\partial x}\right)^2 W^{\cX,(\cL,f)}(q,x)=\left (x\frac{\partial}{\partial x}\right)^2 F_{0,1}^{\cX,(\cL,f)}(\btau_2,X).\]
Here $s$ and $q$ are related by \eqref{eqn:q-to-s}. The A-model flat coordinates $\btau_2,X$ and the B-model coordinates $q,x$ are related by the mirror maps \eqref{eqn:closed-mirror-map} and \eqref{eqn:open-mirror-map}.
\end{theorem}

\begin{remark}
\begin{itemize} 
\item[(i)] Theorem \ref{conj:akv} for smooth toric Calabi-Yau 3-folds was conjectured in \cite{AV00, AKV02} and proved
in  \cite{FL}. 

\item[(ii)]  Theorem \ref{conj:akv} can also be written as
\[\int \sum_{\eta\in \bmu_m^*}(\log y\frac{dx}{x}|_{U^\epsilon_\eta})\phi_\eta`` =" W^{\cX,(\cL,f)}(q,x),\]
where the integral is indefinite and $``="$  means their instanton parts are equal in the following sense. The left side is the sum of a power series with no constant term in $x$ and an extra term in the form of $f(q)\log x+c$. The power series part is equal to the right side. Note that the constant ambiguity in the indefinite integral is irrelevant here. If $\cX$ is a smooth variety, then $\bmu_m^*$ is trivial, and we revert to the original form of the conjecture in \cite{AV00, AKV02}. We will prove this conjecture in the next subsection.
\end{itemize} 

\end{remark}

\subsection{Open mirror theorem for disk amplitudes}
\label{sec:open-mirror}

\begin{lemma}\label{lm:exppoly}
The solution $v$ to the exponential polynomial equation
\begin{equation}\label{eqn:tzero}
\sum_{a=0}^k t_a e^{r_a v}- e^v +1=0,
\end{equation}
around $t_0=\dots={t}_k=0, v=0$ is in the following power series form
\begin{align}
v=\sum_{\substack{l_0,\dots,l_k=0\\(l_0,\dots,l_k)\neq 0}}^\infty \frac{(r_0l_0 + \dots r_k l_k -1)_{(l_0+\dots+l_k-1)}}{l_0!\dots l_k!}
t_0^{l_0}\dots t_k^{l_k}.
\label{eqn:exppoly}
\end{align}
Here we adopt the Pochhammer symbol $$(a)_n=\frac{\Gamma(a+1)}{\Gamma(a-n+1)}=\begin{cases} a(a-1)\cdots(a-n+1),\quad n>0;\\ 1,\quad n=0;\\\frac{1}{(a+1)\dots(a-n)},\quad n<0;\end{cases}$$ where $a\in \bC$ and $n\in \bZ$.
\end{lemma}
\begin{proof}
See Appendix \ref{app:exppoly}.
\end{proof}

Starting from the above observation, we prove Theorem \ref{conj:akv} in this section.
In order to find the expansion of $\log y$ on $U^\epsilon_\eta$, we assume
$$
\log y=\log \xi_0 + \frac{2\pi\sqrt{-1}}{\fm}\bar{\eta}+\frac{v(q,x)}{\fm}
= \frac{\pi\sqrt{-1}}{\fm}(-1+2\bar{\eta}) + \frac{v(q,x)}{\fm}
$$
where $v$ is a power series in $q$ and $x$.  Setting
\begin{align*}
&\xi_{\bar\eta}=e^{\frac{2\pi\sqrt{-1}}{\fm}\bar{\eta}},\ t_0=x^{\fr} (\xi_0\xi_{\bar \eta})^{-s-\fr f},\ r_0=-w_2\fr,\\
&t_a=s_ax^{m_a} (\xi_0\xi_{\bar\eta})^{n_a-fm_a},\ r_a=\frac{n_a-fm_a}{\fm},
\end{align*}
the mirror curve is
$$
H(x,y)=\sum_{a=0}^k t_a e^{r_a v}-e^v+1=0.
$$
Let $\bD_\eff(\cX,\cL)=\{\tbeta=(d_0,\beta) \in \bZ\times \bL|\langle \tbeta,D_i \rangle\in \bZ_{\geq 0},i\neq 2,3\}$. So $\bK_\eff(\cX,\cL)\subset \bD_\eff(\cX,\cL)$.
By Lemma \ref{lm:exppoly},
\begin{align*}
v&=\sum_{\substack{l_0,\dots,l_k=0\\(l_0,\dots, l_k)\neq 0}}^\infty\frac{(r_0l_0+\dots r_k l_k -1)_{(l_0+\dots + l_k -1)}}{l_0!\dots l_k!}\prod_{a=0}^k t_a^{l_a}.\\
&=-\sum_{\tbeta\in \bD_\eff(\cX,\cL)} (\xi_0\xi_{\bar{\eta}})^{-m\langle D_2,\tbeta\rangle}\frac{(-\langle D_2,\tbeta \rangle-1)_{(-\langle D_2,\tbeta \rangle - \langle D_3,\tbeta \rangle -1)}}{\prod_{i\in I_\tau} \langle D_i,\tbeta \rangle!} x^{d_0} \prod_{a=1}^ks_a^{\langle D_{a+3}, \tbeta\rangle }\\
&=-\sum_{\tbeta\in \bD_\eff(\cX,\cL)} (\xi_0)^{-m\langle D_3,\tbeta\rangle}(\xi_{\bar \eta})^{-m\langle D_2,\tbeta \rangle} \frac{(-\langle D_3,\tbeta \rangle-1)_{(-\langle D_2,\tbeta \rangle - \langle D_3,\tbeta \rangle -1)}}{\prod_{i\in I_\tau} \langle D_i,\tbeta \rangle!} x^{d_0}q^\beta.
\end{align*}

Suppose that $\cL$ is an outer brane.
For any $\tbeta=(d_0,\beta) \in \bD_\eff(\cX,\cL)$, we have
$$
d_0(\wv_1+f\wv_2)-v(\beta) +N_\si \in G_\tau\subset G_\si = N/N_\si.
$$
Let $\lambda = d_0(\wv_1+f\wv_2)-v(\beta)+N_\si \in G_\tau$. Then $h(d_0,\lambda)=v(\beta)
\in G_\si$.
If $\cL$ is an inner brane, we replace $\si$ by $\si_+$ in the above discussion and define $\lambda\in G_1$ similarly. Then
$$
\langle D_2,\tbeta \rangle  \in \frac{\bar{\lambda}}{\fm} +\bZ,\quad
\langle D_3,\tbeta \rangle  \in -\frac{\bar{\lambda}}{\fm} + \bZ.
$$
So
$$
\xi_{\bar\eta}^{-\langle D_2,\tbeta\rangle} =
\exp(-\frac{2\pi\sqrt{-1}}{\fm}\bar{\eta}\bar{\lambda})
= \chi_\eta(\lambda^{-1})
$$

It follows that
\begin{align*}
x\frac{\partial}{\partial x}\sum_{\eta \in \bmu_m^*}(\log y)_{U_\eta^\epsilon}\phi_\eta
=&-\sum_{\substack{\tbeta\in \bK_\eff(\cX,\cL)\\h(d_0,\lambda)=v(\beta)}} (\xi_0)^{-\fm\langle D_3,\tbeta\rangle} \frac{d_0(-\langle D_3,\tbeta \rangle-1)_{(-\langle D_2,\tbeta \rangle - \langle D_3,\tbeta \rangle -1)}}{\fm\prod_{i\in I_\tau} \langle D_i,\tbeta \rangle!} \sum_{\eta \in \bmu^*_m}\chi_\eta (\lambda^{-1})\phi_\eta  x^{d_0}q^\beta\\
=&-\sum_{\substack{\tbeta\in \bK_\eff(\cX,\cL)\\h(d_0,\lambda)=v(\beta)}} (\xi_0)^{-\fm\lfloor \langle D_3,\tbeta\rangle \rfloor} 
\frac{d_0(-\langle D_3,\tbeta \rangle-1)_{(-\langle D_2,\tbeta \rangle - \langle D_3,\tbeta \rangle -1)}}{\fm\prod_{i\in I_\tau} \langle D_i,\tbeta \rangle!} 
\sum_{\eta \in \bmu^*_{\fm}}(\xi_0)^{\bar \lambda} \chi_\eta(\lambda^{-1})\phi_\eta   x^{d_0}q^\beta\\
=&\sum_{\substack{\tbeta\in \bK_\eff(\cX,\cL)\\h(d_0,\lambda)=v(\beta)}} -(-1)^{\lfloor \langle D_3,\tbeta\rangle \rfloor} \frac{d_0\Gamma(-\langle D_3,\tbeta \rangle)}{\fm\Gamma(\langle D_2,\tbeta \rangle+1)
\prod_{i\in I_\tau} \Gamma(\langle D_i,\tbeta \rangle+1)}  x^{d_0}q^\beta \xi_0^{\bar{\lambda}}\one_{\lambda^{-1}}.
\end{align*}
On the other hand,
\begin{align*}
&\big (x\frac{\partial}{\partial x}\big)^2 W^{\cX,(\cL,f)}(x,q)\\
=&\sum_{\substack{\tbeta\in \bK_\eff(\cX,\cL)\\h(d_0,\lambda)=v(\beta)}}- (-1)^{\lfloor \langle D_3,\tbeta\rangle \rfloor} 
\frac{d_0\Gamma(-\langle D_3,\tbeta \rangle)}{m\Gamma(\langle D_2,\tbeta \rangle+1)\prod_{i\in I_\tau} \Gamma(\langle D_i,\tbeta \rangle+1)}  x^{d_0}q^\beta \xi_0^{\bar{\lambda}}\one_{\lambda^{-1}}.
\end{align*}
Thus Theorem \ref{conj:akv} follows.

\appendix

\section{Proof of Lemma  \ref{lm:exppoly}}
\label{app:exppoly}

In this appendix, we obtain a power series solution  to the following exponential polynomial where $r_a\in \bR$
\begin{equation} \label{eqn:tone}
1-e^v+ \sum_{a=1}^k  t_a e^{r_a v}=0
\end{equation}
around $t_1=\dots=t_k=0$ by oscillatory integral and inverse Laplace transform. Note that the notation here is slightly different from
that in Lemma \ref{lm:exppoly}: the sum in the above Equation \eqref{eqn:tone} starts from $a=1$,
whereas the sum in Equation \eqref{eqn:tzero} in Lemma \ref{lm:exppoly} starts from $a=0$.

We also consider the following equation where $f,r_a\in \bZ_{>0}$
\begin{equation}
  \label{eqn:curve}
L(X,Y)=1+XY^{-f}+Y+\sum_{a=1}^k s_a Y^{r_a}=0.
\end{equation}
Let $X=e^{-x}$ and $Y=e^{-y}$. This equation identifies with Equation \eqref{eqn:tone} after setting $X=0$ and a change of variables $v=\sqrt{-1}\pi -y, t_a=(-1)^{r_a}s_a$.

\begin{lemma}
For Equation \eqref{eqn:tone}, one can expand $v$ as a power series in $t_a$, where each coefficient is a rational function of $r_a$. For Equation \eqref{eqn:curve}, the variable $Y$ can be expanded as a power series of $(-1)^{r_a}s_a$ and $(-1)^fX$ around $Y=-1$ with each coefficient rational in $r_a$ and $f$. One can also expand $Y$ as a power series of $s_a$ and $(-X)^{\frac{1}{f}}$ around $Y=0$ with each coefficient rational in $r_a$ and $f$.
\label{lem:rational}
\end{lemma}
\begin{proof}
  This is done by elementary recursive calculation. We illustrate the expansion at $Y=0$ for Equation \eqref{eqn:curve}. The equation can be written as
  \[
  Y^f+Y^{f+1}+\sum_{a=1}^k s_a Y^{r_a+f}=((-X)^{\frac{1}{f}})^f.
  \]
  Implicit function theorem (applying to $Y(Y+1+\sum_{a=1}^k s_aY^{r_a})^{\frac{1}{f}}=(-X)^{\frac{1}{f}}$) says $Y$ is analytic in $(-X)^{\frac{1}{f}}$ and $s_a$ around $(X,Y,s)=(0,0,0)$, and recursive calculation shows each coefficient is a rational function of lower degree coefficients.
\end{proof}

We consider an affine curve
\begin{equation*}
C:=\{(X,Y)\in (\bC^*)^2\mid L(X,Y)=0\}
\end{equation*}
and its partial compactification $\bar C \subset \bC^2$ with two points $(X,Y)=(0,0)$
and $(X,Y)=(0,-1+O(s))$ added. Let $e^{-x_0}$ be the branch point of the map $(X,Y)\mapsto X$ such that 
$e^{-x_0}=-\frac{1}{4}+O(s)$. Let $\gamma_{s}$
be the oriented Lefschetz thimble which passes through the ramification point
$(e^{-x_0}, e^{-y_0})=(-\frac{1}{4}+O(s), -\frac{1}{2}+O(s))$ and  goes
from $(X,Y)=(0,-1+O(s))$ to $(X,Y)=(0,0)$.
So the coordinate on $\gamma_s$ is $z$ such that $x-x_0=z^2$. We choose the sign of $z$ such that
$(X,Y)=(0,0)$ is at $z=+\infty$.
\begin{lemma}
\label{lem:oscillatory}
$$
\int_{\gamma_{s}}  e^{-ux}  ydx=\sum_{l_1,\dots,l_k \geq 0} e^{ \sqrt{-1}\pi(-(f+1)u+\sum_{a=1}^k r_al_a)}
\frac{\Gamma(u)\Gamma(fu+\sum_{a=1}^k r_a l_a)}{\Gamma((f+1)u+\sum_{a=1}^k ( r_a-1)l_a+1)}
\frac{\prod_{a=1}^k s_a^{l_a}}{l_1!\dots l_k!}.
$$
\end{lemma}
\begin{proof}
Consider a Landau-Ginzburg model $W_{s}:(\bC^*)^3\to \bC$, where
\begin{align*}
W_{s}=X_1 X_2^{-f}X_3+X_2 X_3 +X_3+\sum_{a=1}^k s_a X_2^{r_a} X_3 -u \log X_1.
\end{align*}
Define $\vt_1 =X_1 X_2^{-f} X_3,  \vt_2 = X_2 X_3, \hat t_1 =X_1 X_2^{-f},   \hat t_2= X_2.$

Let $X_3\in \Gamma_3$ be a cycle that counter-clockwise encircles the positive real axis, starting and ending on the positive real infinity. We require the argument of each $X_3\in \Gamma_3$ takes every value in $(0,2\pi)$ once. Define the relative connected cycle $\Gamma_{s}$ to be
\begin{align*}
\Gamma_{s}=&\{(X_1,X_2,X_3)\in (\bC^*)^3 \mid \vt_1>0,\vt_2>0, X_3\in \Gamma_3,
\text{when $X_3<0$ and $s=0$, $X_2 \in \bR^-$} \}.
\end{align*}
When $|s|<\epsilon$ for small $\epsilon$, the superpotential $\mathrm{Re}(W)\to \infty$ in the non-compact direction of $\Gamma_{s}$.
On $\Gamma_{s}$ the logarithm is taken in the following way: when $X_3<0$ and $s=0$,
$$
\arg(X_1)= -(f+1)\pi, \quad \arg(X_2)= -\pi,\quad \arg(X_3)=\pi.
$$
Since the cycle $\Gamma_{s}$ is simply-connected and deforms continuously with respect to $s$,
this choice  is fixed. Evaluate the following oscillatory integral of $W_{s}$
\begin{align*}
I(u)=&\int_{\Gamma_s} e^{-W_{s}} \frac{dX_1}{X_1} \frac{dX_2}{X_2} \frac{dX_3}{X_3}\\
=&\int_{\Gamma_s} \exp(-\sum_{a=1}^k s_a \vt_2^{r_a} X_3^{1-r_a}-\vt_1-\vt_2-X_3 +u \log \vt_1 
+ f u\log\vt_2
- (f+1)u \log X_3)  \frac{d\vt_1}{\vt_1}\frac{d\vt_2}{\vt_2}\frac{dX_3}{X_3}\\
=&\int_{\Gamma_{s} } e^{-\sum_{a=1}^k s_a \vt_2^{r_a} X_3^{1-r_a}-\vt_1-\vt_2-X_3} \vt_1^u 
\vt_2^{fu}
e^{-(f+1)u (\log X_3-\sqrt{-1}\pi)} e^{-(f+1)\sqrt{-1}\pi u}  \frac{d\vt_1}{\vt_1}\frac{d\vt_2}{\vt_2}\frac{dX_3}{X_3}\\
=&-e^{-(f+1)\sqrt{-1}\pi u} \sum_{l_1,\dots, l_k\geq 0}
(-1)^{\sum_{a=1}^k (r_a-1)l_a} \prod_{a=1}^k\frac{(-s_a)^{l_a}}{l_a!}
 \Big ( \int_{\vt_1>0}  e^{-\vt_1} \vt_1^{u-1} d\vt_1 \Big) \\
&\cdot \Big( \int_{\vt_2>0} e^{-\vt_2}   \vt_2^{\sum_{a=0}^k r_a l_a+fu-1}
d\vt_2 \Big ) \cdot \Big (\int_{X_3 \in \Gamma_3} e^{-X_3}
e^{(\log X_3-\sqrt{-1}\pi)( \sum_{a=1}^k (1-r_a)l_a-(f+1)u-1)} dX_3 \Big )\\
=&2\pi\sqrt{-1}e^{-(f+1)\sqrt{-1}\pi u} \sum_{l_1,\dots,l_k\geq 0}
(-1)^ {\sum_{a=1}^k r_a l_a } \prod_{a=1}^k\frac{s_a^{l_a}}{l_a!}
\frac{\Gamma(u) \Gamma(fu+\sum_{a=1}^k r_a l_a )}{\Gamma((f+1)u+\sum_{a=1}^k (r_a -1) l_a +1)}.
\end{align*}
Here we use the Hankel's formula
$$
\frac{\sqrt{-1}}{2\pi}\int_{\Gamma_3} e^{-z(\log(t)-\sqrt{-1}\pi)}e^{-t} dt=\frac{1}{\Gamma(z)}.
$$

By Hori-Iqbal-Vafa \cite{HIV00}, this oscillatory integral could be  reduced to a Laplace transform on the curve $C$. Introduce two variables $v^+,v^-\in \bC$, and the extended cycle $$\tGamma_s=\Gamma_s \times \{v^+=-\overline{v^-}\}.$$ Define $H=\frac{W_s}{X_3}$.  Define the holomorphic volume form  $$\Omega=\frac{dX_1}{ X_1}\frac{d X_2}{X_2}\frac{dv^-}{v^-}=dx dy \frac{dv^-}{v^-}.$$

We reduce the oscillatory integral to the curve $C$ as follows. Let $\tGammar=\{(\hat t_1,\hat t_2)|\arg \hat t_1=\arg \hat t_2\}\times \{v^+=-\overline{v^-}\}.$
\begin{align*}
I(u)=&\frac{1}{2\sqrt{-1}\pi} \int_{\tGamma_s} e^{-X_3 (H-v^+ v^-)}e^{-ux} \frac{d  X_1}{ X_1}\frac{d  X_2}{X_2} dX_3 dv^+ dv^-\\
=& \frac{1}{2\sqrt{-1}\pi} \int_{\tGammar} \delta(H-v^+ v^-) e^{-ux} \frac{d  X_1}{ X_1}\frac{d  X_2}{X_2} dv^+ dv^-\\
=&-  \int_{\tGammar \cap \{H-v^+ v^-=0\}} e^{-ux} \frac{d  X_1}{ X_1}\frac{d  X_2}{X_2}\frac{dv^-}{v^-}.
\end{align*}
This integration is further reduced to the curve $C=\{H(e^{-x},e^{-y})=0\}$ as follows.
$$
I(u)=- \int_{\tGammar \cap \{H-v^+ v^-=0\}} e^{-ux}dx dy  \frac{dv^-}{v^-}\\
= 2\sqrt{-1}\pi\int_{\gamma_s}  e^{-ux} y dx.
$$
Notice that we use the fact $d(e^{-ux} ydx\frac{dv^-}{v^-})=-e^{-ux} \Omega$ near $\tGammar \cap \{H-v^+ v^-=0\}$.
\end{proof}

The function $\frac{dy}{dx}$ is a meromorphic function on the partially compactified curve $\bar C$ with the only pole at $x_0$. Its expansion at $(X,Y)=(0,-1+O(s))$ is a power series in $X$, while its expansion at $(X,Y)=(0,0)$ is a series in $X^{\frac{1}{f}}$. Denote $g^\pm(x)=\frac{dy}{dx}\vert_{z=\pm\sqrt{x-x_0}}$. Then $g^-$ is a power series in $X$ and $s_a$, while $g^+$ is a power series in $X^{\frac{1}{f}}$ and $s_a$. Since they are expansions of $\frac{dy}{dx}=\frac{XdY}{YdX}$ regarding to the curve equation \eqref{eqn:curve}, as series of $(-1)^f X$, $(-1)^{r_a}s_a$ and $(-X)^{\frac{1}{f}}$, $s_a$ respectively, their coefficients are rational in $r_a$ and $f$ by Lemma \ref{lem:rational}.

By Lemma \ref{lem:oscillatory}, the ``classical Laplace transform" is
\begin{align*}
\mathfrak G(u)=&\int_{x-x_0\in \bR^+} e^{-u(x-x_0)}(g^+(x)-g^-(x))d(x-x_0)\\
=&\int_{x-x_0\in \bR^+}e^{-u(x-x_0)} \left(\frac{dy}{dx}\right) d(x-x_0)=u e^{ux_0}\int_{\gamma_s} e^{-ux} ydx\\
=&\sum_{l_1,\dots,l_k\geq 0} u e^{ux_0}e^{\sqrt{-1}\pi(-(f+1)u+\sum_{a=1}^k r_al_a)}
\frac{\Gamma(u)\Gamma(fu+\sum_{a=1}^kr_a l_a)}{\Gamma((f+1)u  + \sum_{a=1}^k(r_a-1)l_a+1)}\frac{\prod_{a=1}^k s_a^{l_a}}{l_1!\dots l_k!}.
\end{align*}
By the inverse Laplace transform formula,
\begin{align*}
(g^+-g^-)=\int_{u=-\infty \sqrt{-1}+ T}^{u=+\infty\sqrt{-1}+T}\mathfrak G(u)e^{u(x-x_0)}du,
\end{align*}
where $T$ is large enough such that all poles of $\mathfrak G(u)$ is on the left of the integration contour.  Here the inverse Laplace transform takes residues around poles of $\Gamma(u)$ and $\Gamma(fu+\sum_{a=1}^kr_al_a)$. Taking the residues around all poles (other than the possible pole at $u=0$) of $\Gamma(fu+\sum_{a=1}^k r_a l_a)$ gives a series of $(-X)^{\frac{1}{f}}$ with coefficients rational in $r_a$ and $f$, denoted by $h^+$
\begin{align*}
h^+&=\sum_{l>0,l_1,\dots, l_k\geq 0}h^+_{l,l_1,\dots,l_k} \prod_{a=1}^k s_a^{l_a}((-X)^{\frac{1}{f}})^l
\\&=\sum_{l>0,l_1,\dots, l_k\geq 0}(-l/f)e^{\sqrt{-1}\pi(-\frac{f+1}{f}l+\sum_{a=1}^k r_a l_a)}(X^{\frac{1}{f}})^l \frac{\Gamma(-\frac{l}{f})\Res_{u=-\frac{l}{f}}\Gamma(fu+\sum_{a=1}^k r_a l_a)}{\Gamma(-\frac{l(f+1)}{f}+\sum_{a=1}^k(r_a-1)l_a+1)}\frac{\prod_{a=1}^k s_a^{l_a}}{l_1!\dots l_k!}\\
&=\sum_{l>0,l_1,\dots, l_k\geq 0}(-l/f)(-X)^{\frac{l}{f}}\frac{\Gamma(-\frac{l}{f})}{\Gamma(-\frac{l(f+1)}{f}+\sum_{a=1}^k(r_a-1)l_a+1)f}\frac{\prod_{a=1}^k s_a^{l_a}}{l!l_1!\dots l_k!};
\end{align*}
while taking residues around the poles of $\Gamma(u)$ (other than the possible pole at $u=0$) we get a power series in $X$
\begin{align*}
h^-=&\sum_{l>0, l_1,\dots,l_k\geq 0} (-l) X^l e^{\sqrt{-1}\pi(-(f+1)l+\sum_{a=1}^k r_al_a)}
\frac{\mathrm{Res}_{u=-l}(\Gamma(u))\Gamma(-fl+\sum_{a=1}^kr_a l_a)}{\Gamma(-(f+1)l +\sum_{a=1}^k(r_a-1)l_a+1)}\frac{\prod_{a=1}^k s_a^{l_a}}{l_1!\dots l_k!}\\
=&\sum_{l>0, l_1,\dots,l_k\geq 0} (-l) ((-1)^fX)^l
\frac{\Gamma(-fl+\sum_{a=1}^kr_a l_a)}{\Gamma(-(f+1)l +\sum_{a=1}^k(r_a-1)l_a+1)l!}\frac{\prod_{a=1}^k ((-1)^{r_a}s_a)^{l_a}}{l_1!\dots l_k!}.
\end{align*}
So $g^+-g^-=h^++h^-+\text{const.}$, where the constant difference (in $X$) arises since we don't consider the residue around $u=0$. For any degree $l\geq 1$, choose $f>l$ such that the term $((-X)^\frac{1}{f})^l\prod_{a=1}^k s_a^{l_a}$ in $g^+$ is not a monomial in $X$, and thus can only come from $h^+$. Since the coefficient of the term is rational in $f$, it has to be equal to the corresponding term $h^+_{l,l_1,\dots,l_k}$ for all $f>0$. Therefore $g^+=h^++\mathrm{const.}$ and $-g^-=h^-$. Here note that $g^-$ is the expansion of $\frac{dy}{dx}$, and since $y$ is analytic in $X$ at $(X,Y)=(0,-1+O(s))$, $-g^-$ has no degree $0$ term and does not differ from $h^-$ by a degree $0$ term in $X$.

Suppose the expansion of $y$ at $(X,Y)=(0,-1+O(s))$ is $y=A_0 +\sum_{l>0} A_l  X^l$, then the expansion of $\frac{dy}{dx}$ at this point is
$$
g^-=\frac{dy}{dx}=-\sum_{l>0} l A_l X^l.
$$
Therefore for $l\geq 1$,
\begin{equation}
  \label{eqn:A_l}
A_l=-\sum_{l_1,\dots,l_k\geq 0} e^{\sqrt{-1}\pi(- fl+\sum_{a=1}^k r_al_a)}
\frac{\Gamma(-fl+\sum_{a=1}^kr_a l_a)}{\Gamma(-(f+1)l + \sum_{a=1}^k(r_a-1)l_a+1)}\frac{\prod_{a=1}^k s_a^{l_a}}{l_1!\dots l_k!}.
\end{equation}

We prove Lemma \ref{lm:exppoly} by induction. The statement is true for $k=0$ trivially. Assume it is true for $k=\fm-1\ (\fm\geq 1)$. For $k=\fm$ we first assume $r_{\fm}\in \bZ_{<0}$ and all other $r_a$ ($a=1,\dots,\fm-1$) are positive integers. By the induction assumption we know the expansion is given as in Lemma \ref{lm:exppoly} for terms of degree $0$ in $t_{\fm}$. Let $f=-r_{\fm}$. 
After a change of variables $v=\sqrt{-1}\pi-y$, $(-1)^{r_a}s_a=t_a$ for $a=1,\dots,{\fm-1}$ and $X=(-1)^ft_{\fm}$ we obtain  Equation \eqref{eqn:curve}. Then from Equation \eqref{eqn:A_l} we know the expansion of $y$ for positive degree terms in $X$, thus conclude that for positive degree terms in $t_{\fm}$ the lemma also holds. Then for all degrees the lemma holds
\[
v=\sum_{\substack{l_1,\dots,l_k=1\\(l_1,\dots,l_k)\neq 0}}^\infty \frac{(r_1l_1 + \dots r_k l_k -1)_{(l_1+\dots+l_k-1)}}{l_1!\dots l_k!}
t_1^{l_1}\dots t_k^{l_k}.
\]
Each coefficient is a rational function of $r_1,\dots,r_{\fm}$. The above equation holds for $r_1,\dots, r_{\fm-1}\in \bZ_{>0}$ 
and $r_{\fm}\in \bZ_{<0}$, so it is true for all $r_a\in \bR$.


\begin{thebibliography}{ABC}

\bibitem{AF16} D. Abramovich, B. Fantechi,
``Orbifold techniques in degeneration formulas,"
Ann. Sc. Norm. Super. Pisa Cl. Sci. (5) {\bf 16} (2016), no. 2, 519--579.

\bibitem{AGV02} D. Abramovich, T. Graber, A. Vistoli, 
``Algebraic orbifold quantum products,'' 
{\it Orbifolds in mathematics and physics (Madison, WI, 2001),} 1--24, 
Contemp. Math., {\bf 310}, Amer. Math. Soc., Providence, RI, 2002.

\bibitem{AGV08} D. Abramovich, T. Graber, A. Vistoli, ``Gromov-Witten theory of Deligne-Mumford stacks,''
Amer. J. Math. {\bf 130} (2008), no. 5, 1337--1398.

\bibitem{AKMV} M. Aganagic, A. Klemm, M. Mari\~{n}o, C. Vafa,
``The topological vertex,'' Comm. Math. Phys. {\bf 254} (2005), no. 2, 425--478.

\bibitem{AKV02} M. Aganagic, A. Klemm, C. Vafa, 
``Disk instantons, mirror symmetry and the duality web,'' 
Z. Naturforsch. A {\bf 57} (2002), no. 1-2, 1--28.

\bibitem{AV00} M. Aganagic, C. Vafa, 
``Mirror Symmetry, D-Branes and Counting Holomorphic Discs,'' {\tt hep-th/0012041}.

\bibitem{BCS05} L. Borisov, L. Chen, G.  Smith, 
``The orbifold Chow ring of toric Deligne-Mumford stacks,'' 
J. Amer. Math. Soc. {\bf 18} (2005), no. 1, 193--215.

\bibitem{BKMP1} V. Bouchard, A. Klemm, M. Mari\~{n}o, and S. Pasquetti,
``Remodeling the B-model,'' Comm. Math. Phys. {\bf 287} (2009), no. 1, 117--178.

\bibitem{BKMP2} V. Bouchard, A. Klemm, M. Mari\~{n}o, and S. Pasquetti,
``Topological open strings on orbifolds,''
Comm. Math. Phys. {\bf 296} (2010), 589--623.

\bibitem{Br12} A. Brini, 
``Open topological strings and integrable hierarchies: remodeling the A-model,'' 
Comm. Math. Phys. {\bf 312} (2012), no. 3, 735--780.

\bibitem{BC11} A. Brini, R. Cavalieri, 
``Open orbifold Gromov-Witten invariants of $[\bC^3/\bZ_n]$: localization and mirror symmetry,''  
Selecta Math. (N.S.) {\bf 17} (2011), no. 4, 879--933.

\bibitem{BCR} A. Brini, R. Cavalieri, D. Ross, 
``Crepant resolution and open strings,''
J. Reine Angew. Math. 755 (2019), 191--245.

\bibitem{BCY} J. Bryan, C. Cadman, B. Young, 
``The orbifold topological vertex,''
Adv. Math. {\bf 229} (2012), no. 1, 531--595.

\bibitem{Ca07} C. Cadman, 
``Using stacks to impose tangency conditions on curves,"
 Amer. J. Math., {\bf 129} (2007), no. 2, 405--427.
 
\bibitem{Cho} C.-H. Cho,
``Counting real J-holomorphic discs and spheres in dimension four and six,''
J. Korean Math. Soc. {\bf 45} (2008), no. 5, 1427--1442.


\bibitem{CR} R. Cavalieri, D. Ross, 
``Open Gromov-Witten theory and the crepant resolution conjecture,'' 
Michigan Math. J. {\bf 61} (2012), no. 4, 807--837.


\bibitem{CV} S. Cecotti, C. Vafa, 
``Massive orbifolds,'' Modern Phys. Lett. A {\bf 7} (1992), no. 19, 1715--1723.

\bibitem{CCLT1} K. Chan, C.-H. Cho, S.-C. Lau, H.-H. Tseng,
``Lagrangian Floer superpotentials and crepant resolutions for toric orbifolds,''
Comm. Math. Phys. {\bf 328} (2014), no. 1, 83--130.

\bibitem{CCLT2} K. Chan, C.-H. Cho, S.-C. Lau and H.-H. Tseng,
``Gross fibrations, SYZ mirror symmetry, and open Gromov-Witten invariants for toric Calabi-Yau orbifolds,''
 J. Differential Geom. {\bf 103} (2016), no. 2, 207--288.

\bibitem{CLLT} K. Chan, S.-C. Lau, N. C. Leung, H.-H. Tseng, 
``Open Gromov-Witten invariants, mirror maps, and Seidel representations for semi-Fano toric manifolds,''
Duke Math. J. {\bf 166} (2017), no. 8, 1405--1462. 

 
\bibitem{CLT} K. Chan, S.-C. Lau, H.-H. Tseng, 
``Enumerative meaning of mirror maps for toric Calabi-Yau manifolds,'' 
Adv. Math. {\bf 244} (2013), 605--625.

\bibitem{CR02} W. Chen, Y. Ruan, ``Orbifold Gromov-Witten theory,'' 
{\it Orbifolds in mathematics and physics (Madison, WI, 2001),} 25--85,
Contemp. Math., {\bf 310}, Amer. Math. Soc., Providence, RI, 2002.

\bibitem{CR04} W. Chen, Y. Ruan, 
``A new cohomology theory of orbifold,'' 
Comm. Math. Phys. {\bf 248} (2004), no. 1, 1--31.

\bibitem{CP} C.-H. Cho, M. Poddar, 
``Holomorphic orbidiscs and Lagrangian Floer cohomology of symplectic toric orbifolds,'' 
J. Differential Geom. {\bf 98} (2014), no. 1, 21--116.


\bibitem{Co09} T. Coates, 
``On the crepant resolution conjecture in the local case,'' 
Comm. Math. Phys. {\bf 287} (2009), no. 3, 1071--1108.

\bibitem{CG07} T. Coates, A. Givental, 
``Quantum Riemann-Roch, Lefschetz and Serre,'' 
Ann. of Math. (2) {\bf 165} (2007), no. 1, 15--53.

\bibitem{CCIT} T. Coates, A. Corti, H. Iritani, H.-H. Tseng, 
``A mirror theorem for toric stacks,'' 
Compos. Math. {\bf 151} (2015), no. 10, 1878--1912.

\bibitem{CLS} D. Cox, J. Little, H. Schenck, 
{\em Toric varieties}, 
Graduate Studies in Mathematics {\bf 124}, American Mathematical Society, 2011.

\bibitem{DF} D.E. Diaconescu, B. Florea,
``Localization and Gluing of Topological Amplitudes,''
Comm. Math. Phys. {\bf 257} (2005) 119--149.

\bibitem{EO07} E. Eynard, N. Orantin, 
``Invariants of algebraic curves and topological expansion,'' 
Commun. Number Theory Phys. {\bf 1} (2007), no. 2, 347--45

\bibitem{EO15} E. Eynard, N. Orantin,  
``Computation of open Gromov-Witten invariants for toric Calabi-Yau 3-folds by topological recursion, a proof of the BKMP conjecture,''  
Comm. Math. Phys. {\bf 337} (2015), no. 2, 483--567.

\bibitem{F} B. Fang, ``Global mirror curve and its implication,'' 
{\em Gromov-Witten theory, gauge theory and dualities}, 
21 pp.,  Proc. Centre Math. Appl. Austral. Nat. Univ., {\bf 48}, Austral. Nat. Univ., Canberra, 2019. 

\bibitem{FL} B. Fang, C.-C. Liu, 
``Open Gromov-Witten invariants of toric Calabi-Yau 3-folds,''
Comm. Math. Phys. {\bf 323} (2013), no. 1, 285--328

\bibitem{FLT} B. Fang, C.-C. Liu, H.-H. Tseng,
``Open-Closed Gromov-Witten Invariants of 3-dimensional Calabi-Yau Smooth Toric DM Stacks,''
{\tt arXiv:1212.6073v1}.

\bibitem{FLZ1} B. Fang, C.-C. M. Liu, Z. Zong, 
``All-genus open-closed mirror symmetry for affine toric Calabi-Yau 3-orbifolds,"
Algebr. Geom. {\bf 7} (2020), no. 2, 192--239.

\bibitem{FLZ} B. Fang, C.-C. M. Liu, Z. Zong,
``On the remoding conjecture for toric Calabi-Yau 3-orbifolds,''   
J. Amer. Math. Soc. {\bf 33} (2020), no. 1, 135--222. 

\bibitem{FMN} B. Fantechi, E. Mann, F. Nironi,  
``Smooth toric Deligne-Mumford stacks,'' 
J. Reine Angew. Math. {\bf 648} (2010), 201--244. 


\bibitem{FOOO} K. Fukaya, Y.-G. Oh, H. Ohta, K. Ono, 
``Lagrangian Floer theory on compact toric manifolds, I,'' 
Duke Math. J. {\bf 151} (2010), no. 1, 23--174.

\bibitem{Fu93} W. Fulton, 
\emph{Introduction to toric varieties}, Annals of Mathematics Studies, {\bf 131}. 
The William H. Roever Lectures in Geometry. Princeton University Press, Princeton, NJ, 1993.

\bibitem{Gi96} A. Givental, 
``A mirror theorem for toric complete intersections,'' 
Topological field theory, primitive forms and related topics (Kyoto, 1996), 141--175, 
Progr. Math., {\bf 160}, \emph{Birkh\"auser Boston}, Boston, MA, 1998.

\bibitem{Gi98} A. Givental, 
``Elliptic Gromov-Witten invariants and the generalized mirror conjecture,'' 
Integrable systems and algebraic geometry (Kobe/Kyoto, 1997), 107--155, World Sci. Publ., River Edge, NJ, 1998.

\bibitem{GW} E. Gonz\'{a}lez, C. Woodward, 
``Quantum cohomology and toric minimal model programs,''
Adv. Math. {\bf 353} (2019), 591--646.

\bibitem{GZ} T. Graber, E. Zaslow, 
``Open-string Gromov-Witten invariants: calculations and a mirror ``theorem'','' 
Orbifolds in mathematics and physics (Madison, WI, 2001), 107--121, 
Contemp. Math., {\bf 310}, Amer. Math. Soc., Providence, RI, 2002.

\bibitem{HIV00} K. Hori, A. Iqbal, C. Vafa, 
``D-Branes And Mirror Symmetry,'' {\tt hep-th/0005247}.

\bibitem{HV00} K. Hori, C. Vafa, 
``Mirror Symmetry,'' {\tt arXiv:hep-th/0002222}.

\bibitem{HKTY95} S. Hosono, A. Klemm, S. Theisen, S.-T. Yau, 
``Mirror symmetry, mirror map and applications to complete intersection Calabi-Yau spaces,'' 
Nuclear Phys. B {\bf 433} (1995), no. 3, 501--552.

\bibitem{Ir09} H. Iritani, 
``An integral structure in quantum cohomology and mirror symmetry for toric orbifolds,'' 
Adv. Math. {\bf 222} (2009), no. 3, 1016--1079.

\bibitem{Ji08} Y. Jiang, 
``The orbifold cohomology ring of simplicial toric stack bundles,'' 
Illinois J. Math. {\bf 52} (2008), no. 2, 493--514.

\bibitem{KL01} S. Katz, C.C. Liu,
``Enumerative geometry of stable maps with Lagrangian boundary conditions and multiple covers of the disc,''  
Adv. Theor. Math. Phys. {\bf 5} (2001), no. 1, 1--49.

\bibitem{KZ15} H.-Z. Ke, J. Zhou, 
``Open-Closed Gromov-Witten Invariants of 3-dimensional Calabi-Yau Smooth Toric DM Stacks,'' 
Lett. Math. Phys. {\bf 105} (2015), no. 1, 63--88.


\bibitem{LM01} W. Lerche, P. Mayr, 
``On N=1 Mirror Symmetry for Open Type II Strings,'' 
{\tt arXiv:hep-th/0111113.}

\bibitem{LLY97} B. H. Lian, K. Liu, S.-T. Yau, 
``Mirror principle. I,''  
Asian J. Math. {\bf 1} (1997), no. 4, 729--763.

\bibitem{LLY99} B. H. Lian, K. Liu, S.-T. Yau, 
``Mirror principle. II,''  
Asian J. Math. {\bf 3} (1997), no. 1, 109--146.

\bibitem{Lin09} C. Lin,
``Bouchard-Klemm-Marino-Pasquetti Conjecture for $\mathbb{C}^3$,''
{\em Topological recursion and its influence in analysis, geometry, and topology}, 83--102,
Proc. Sympos. Pure Math., {\bf 100}, Amer. Math. Soc., Providence, RI, 2018. 

\bibitem{Liu02} C.-C. M. Liu, 
``Moduli of $J$-holomorphic curves with Lagrangian boundary conditions and 
open Gromov-Witten invariants for an $S^1$-equivariant pair,''
{\tt arXiv:math/0211388}.

\bibitem{Liu13} C.-C. M. Liu, 
``Localization in Gromov-Witten theory and orbifold Gromov-Witten theory,''
{\em Handbook of moduli. Vol. II}, 353--425, Adv. Lect. Math. (ALM), {\bf 25}, 
Int. Press, Somerville, MA, 2013.

\bibitem{LLLZ} J. Li, C.-C. M. Liu, K. Liu, J. Zhou,
``A mathematical theory of the topological vertex,''
Geom. Topol. {\bf 13} (2009), no. 1, 527--621.

\bibitem{Mar} M. Mari\~ no, 
``Open string amplitudes and large order behavior in topological string theory,'' 
J. High Energy Phys. (2008), no. 3, 060, 34 pp.

\bibitem{Ma01}  P. Mayr, 
``$\mathcal N=1$ mirror symmetry and open/closed string duality,'' 
Adv. Theor. Math. Phys. {\bf 5} (2001), no. 2, 213--242.

\bibitem{MOOP} D. Maulik, A. Oblomkov, A. Okounkov,
``Gromov-Witten/Donaldson-Thomas correspondence for toric 3-folds,''
Invent. Math. {\bf 186} (2011), no. 2, 435--479.

\bibitem{NS06} T. Nishinou, B. Siebert, 
``Toric degeneration of toric varieties and tropical curves,''
Duke Math. J. {\bf 135} (2006), no. 1, 1--51.

\bibitem{OP04} A. Okounkov, R. Pandharipande,
``Hodge integrals and invariants of the unknot,''
Geom. Topol. {\bf 8} (2004), 675--699.

\bibitem{PSW} R. Pandharipande, J. Solomon, J. Walcher,
``Disk enumeration on the quintic 3-fold,''
J. Amer. Math. Soc. {\bf 21} (2008), no. 4, 1169--1209.

\bibitem{PZ} A. Popa, A. Zinger, 
``Mirror symmetry for closed, open, and unoriented Gromov-Witten Invariants,''
Adv. Math. {\bf 259} (2014), 448--510.

\bibitem{Rom} M. Romagny,
``Group actions on stacks and applications,'' 
Michigan Math. J. {\bf 53} (2005), no. 1, 209--236.

\bibitem{Ro14} D. Ross, 
``Localization and gluing of orbifold amplitudes: the Gromov-Witten orbifold vertex,''
Trans. Amer. Math. Soc. {\bf 366} (2014), no. 3, 1587--1620.

\bibitem{Ro15} D. Ross, 
``On GW/DT and Ruan's Conjecture for Calabi-Yau 3-Orbifolds,''
Comm. in Math. Phys. {\bf 340} (2015), 851--864.

\bibitem{RZ13} D. Ross, Z. Zong, 
``The gerby Gopakumar-Mari\~{n}o-Vafa formula,'' 
Geom. Topol. {\bf 17} (2013), no. 5, 2935--2976.

\bibitem{RZ15} D. Ross, Z. Zong, 
``Cyclic Hodge integrals and loop Schur functions,''
Adv. Math. {\bf 285} (2015), 1448--1486.

\bibitem{So} J. Solomon, 
``Intersection theory on the moduli space of holomorphic curves with Lagrangian boundary conditions,'' 
{\tt arXiv:math/0606429}.

\bibitem{T}  H.-H. Tseng, 
``Orbifold quantum Riemann-Roch, Lefschetz and Serre,'' 
Geom. Topol. {\bf 14} (2010), no. 1, 1--81.

\bibitem{Wa} J. Walcher, 
``Opening mirror symmetry on the quintic,'' 
Comm. Math. Phys. {\bf 276} (2007),  no. 3, 671--689.

\bibitem{Yu} S. Yu, 
``The Open Crepant Transformation Conjecture for Toric Calabi-Yau 3-Orbifolds,"
{\tt arXiv:2002.08524}. 

\bibitem{Za} E. Zaslow, 
``Topological orbifold models and quantum cohomology rings,'' 
Comm. Math. Phys. {\bf 156} (1993), no. 2, 301--331.

\bibitem{Zh09} J. Zhou, 
``Local mirror symmetry for one-legged topological vertex,'' 
{\tt arXiv:0910.4320}.

\bibitem{Zh10} J. Zhou,  
``Open string invariants and mirror curve of the resolved conifold,'' 
{\tt arXiv:1001.0447}.

\bibitem{Zo15} Z. Zong, 
``Generalized Mari\~{n}o-Vafa formula and local Gromov-Witten theory of orbi-curves,''
J. Differential Geom. {\bf 100} (2015), no. 1, 161--190.



\end{thebibliography}
\end{document}